\def\margin_comment#1{\marginpar{\sffamily{\tiny #1\par}\normalfont}}
\date{}
 \newtheorem{thm}{Theorem}[section]
 \numberwithin{equation}{section} 
 \numberwithin{figure}{section} 
 \theoremstyle{plain}
  \newtheorem*{thm*}{Theorem}
 \theoremstyle{definition}
\theoremstyle{plain}
\newtheorem{thm_A}{Theorem}
 \newtheorem*{defn*}{Definition}
 \theoremstyle{plain}
 \newtheorem{prop}[thm]{Proposition} 
 \theoremstyle{remark}
 \newtheorem{ex}[thm]{Example}
 \theoremstyle{remark}
 \newtheorem{rem}[thm]{Remark}
\theoremstyle{plain}
 \newtheorem{claim}[thm]{Claim}
\theoremstyle{plain}
 \theoremstyle{plain}
 \newtheorem{lem}[thm]{Lemma} 
 \theoremstyle{definition}
 \newtheorem{defn}[thm]{Definition}
\newtheorem*{acknowledgment}{Acknowledgment}
\begin{document}
\title{Garside groups and Yang-Baxter equation}
\author{Fabienne Chouraqui}
\maketitle
\begin{abstract}
 We establish a one-to-one correspondence between  a class of Garside groups admitting a certain presentation and the structure groups of  non-degenerate, involutive  and braided set-theoretical solutions of the quantum Yang-Baxter equation.
We also characterize indecomposable solutions in terms of  $\Delta$-pure Garside  groups.
\end{abstract}

\section{Introduction}
The quantum Yang-Baxter equation is an equation in the field of mathematical physics and it lies in the foundation of the theory of quantum groups.
Let $R:V  \otimes V \rightarrow V  \otimes V$ be a linear operator, where $V$ is a vector space. The quantum Yang-Baxter equation is the equality $R^{12}R^{13}R^{23}=R^{23}R^{13}R^{12}$ of linear transformations on $V  \otimes V \otimes V$, where $R^{ij}$ means $R$ acting on the $i$th and $j$th components.\\
A set-theoretical solution of this equation  is a solution for which $V$ is a vector space spanned by a set $X$ and $R$ is the linear operator induced by a mapping $X \times X \rightarrow X \times X$. The study of these was suggested by Drinfeld \cite{drinf}. Etingof, Soloviev and Schedler  study set-theoretical solutions $(X,S)$ of the quantum Yang-Baxter equation that are  non-degenerate, involutive and braided \cite{etingof}. To each such solution, they associate a group called the structure group and they show that this group satisfies some properties.
They also give a classification of such solutions $(X,S)$ up to isomorphism,  when the cardinality of $X$ is  up to eight. As an example, there are 23 solutions for $X$ with four elements, 595  solutions for $X$ with six elements and  34528 solutions for $X$ with eight elements. In this paper, we establish  a one-to-one correspondence between non-degenerate, involutive and braided set-theoretical solutions of the quantum Yang-Baxter equation (up to isomorphism) and Garside presentations which satisfy some additional conditions up to t-isomorphism (a notion that will be defined below). The main result is as follows.
\begin{thm_A}
$(i)$ Let $X$ be a finite set, and $(X,S)$ be  a non-degenerate, involutive and braided set-theoretical solution of the quantum Yang-Baxter equation. Then the structure group of $(X,S)$ is a Garside group.\\ $(ii)$ Conversely, assume that  $\operatorname{Mon} \langle X\mid R \rangle$ is a  Garside monoid such that:\\
 - the cardinality of $R$ is $n(n-1)/2$, where $n$ is the cardinality of $X$ and each side of a relation in $R$ has length 2 and \\
- if the  word $x_{i}x_{j}$ appears in $R$, then it appears only once.\\
 Then there exists a function $S: X \times X \rightarrow X \times X$ such that $(X,S)$ is  a non-degenerate, involutive and braided set-theoretical solution and $\operatorname{Gp} \langle X\mid R \rangle$ is its structure group.
\end{thm_A}
The main idea of the proof is to express the right and left complement on the generators in terms of the functions that define $(X,S)$. Moreover, we prove that the structure group of a set-theoretical solution satisfies some specific constraints. Picantin defines the notion of $\Delta$-pure Garside group in \cite{picantin} and  he shows that the center of a $\Delta$-pure Garside group is a cyclic subgroup that is generated by some exponent of its Garside element.

\begin{thm_A}
Let $X$ be a finite set, and  $(X,S)$ be  a non-degenerate, involutive and braided set-theoretical solution of the quantum Yang-Baxter equation. Let $G$ be the structure group of $(X,S)$ and $M$ the monoid with the same presentation. Then \\
$(i)$ The right least common multiple of the elements in $X$ is a Garside element in $M$.\\
$(ii)$ The (co)homological dimension of  $G$ is equal to the cardinality of $X$.\\
$(iii)$ The  group $G$ is   $\Delta$-pure Garside  if and only if $(X,S)$ is indecomposable.
\end{thm_A}

Point \emph{$(i)$} above means that  $G$ is Garside in the restricted sense of \cite{deh_Paris}.
Let us observe  that, independently, Gateva-Ivanova and Van den Bergh define  in  \cite{gateva+van} monoids and groups of left and right I-type and they show that they yield solutions to the quantum Yang-Baxter equation. They show also that a monoid of left I-type is cancellative and has a group of fractions that is torsion-free and Abelian-by-finite. Jespers and Okninski extend their results in \cite{jespers+okninski}, and establish a correspondence between groups of I-type and the structure group of a non-degenerate, involutive and braided set-theoretical solution. Using our result, this makes a correspondence between groups of I-type and the class of Garside groups  studied in this paper.
They also remark that the defining presentation of a monoid of I-type satisfies the right cube condition, as defined by Dehornoy in \cite[Prop.4.4]{deh_complte}. So, the necessity of being Garside can be derived from the combination of the results from \cite{jespers+okninski,gateva+van}. Our methods in this paper are different as we use the tools of reversing and complement developed in the theory of Garside monoids and groups and our techniques of proof are uniform throughout the paper. It can be observed that our results   imply some earlier results by Gateva-Ivanova. Indeed, she  shows in \cite{gateva} that the monoid corresponding to a special case of  non-degenerate, involutive and braided set-theoretical solution (square-free) has a structure of distributive lattice with respect to left and right divisibility and that the left least common multiple of the generators is equal to their right least common multiple and she calls this element the principal monomial.\\
 The paper is organized as follows.
In section $2$, we give some preliminaries on Garside monoids.
In section $3$, we give the definition of the structure group of a non-degenerate, involutive and braided set-theoretical solution and we show that it is Garside, using the criteria developed by Dehornoy in \cite{deh_francais}.
This implies that this group is torsion-free from \cite{deh_torsion} and biautomatic from \cite{deh_francais}.
In section $4$, we show that  the right least common multiple of the generators is a Garside element and that the (co)homological dimension of the structure group of a non-degenerate, involutive and braided set-theoretical solution is equal to the cardinality of $X$. In section $5$,  we give the definition of a $\Delta$-pure Garside group and we show that the structure group of $(X,S)$ is   $\Delta$-pure Garside  if and only if $(X,S)$ is indecomposable.
In section $6$,  we establish a converse to the results of section $3$, namely that  a Garside  monoid satisfying some additional conditions defines a non-degenerate, involutive and braided set-theoretical solution of the quantum Yang-Baxter equation.
Finally, in section $7$, we address the case of non-involutive solutions. There, we consider the special case of  permutation solutions that are not involutive and we show that their structure group is Garside. We could not extend this result to general solutions, although we conjecture this should be true. At the end of the section, we give the form of a Garside element in the case of permutation solutions.\\

\begin{acknowledgment}
This work is a part of  the author's PhD research, done  at the Technion under
the supervision of Professor Arye Juhasz.  I am very grateful to
Professor Arye Juhasz, for his patience, his encouragement and his many helpful remarks.
I am also grateful to Professor Patrick Dehornoy for his comments on this paper.
\end{acknowledgment}

\section{ Garside monoids and groups}
All the definitions and results in this section are from
\cite{deh_francais} and  \cite{deh_livre}. In this paper, if the element $x$ of $M$ is in the equivalence class of the word $w$, we say that \emph{$w$ represents $x$}.
\subsection{Garside monoids}
Let $M$ be a monoid and let $x,y,z$ be elements in $M$. The element $x$ is \emph{a left divisor} of $z$ if there is an element $t$  such that $z=xt$ in $M$ and  $z$ is
\emph{a right least common multiple  (right lcm)} of $x$ and $y$ if $x$ and $y$ are left divisors of $z$ and
additionally  if there is an element $w$ such that $x$ and $y$ are left divisors of $w$, then
   $z$ is  left divisor of $w$. We denote it by $z=x\vee y$.
  \emph{The complement at right of
   $y$ on $x$} is defined to be an element $c\in M$ such that $x\vee y= xc$, whenever $x\vee y$ exists. We denote it by
   $c=x \setminus y$ and by definition, $x\vee y=x(x \setminus y)$.
   Dehornoy shows that if $M$ is left cancellative and $1$ is the unique invertible element in $M$, then the right lcm and the right complement  of two elements are unique, whenever they exist \cite{deh_francais}. We refer the reader to \cite{deh_livre,deh_francais} for the definitions of the left lcm and the left and right gcd of two elements.  An element $x$ in  $M$ is \emph{an atom}  if $x \neq 1$ and  $x=yz$ implies $y=1$ or $z=1$. The \emph{norm} $\parallel x \parallel$ of $x$ is defined to be the supremum of the lengths of the
decompositions of $x$ as a product of atoms. The  monoid  $M$ is
\emph{atomic} if $M$ is generated by its atoms and for every $x$
in $M$ the norm of $x$ is finite. It holds that if all the relations in $M$
are length preserving, then $M$ is atomic, since each element $x$ of $M$ has a finite norm as all the words which represent $x$ have the same length.

A monoid  $M$ is \emph{Gaussian} if $M$ is
atomic, left and right cancellative, and if any two elements in
$M$ have a left and right gcd and lcm. If $\Delta$ is an element in $M$, then $\Delta$ is a \emph{Garside element} if the left divisors of $\Delta$ are the same as the right divisors, there is a finite
number of them and they generate $M$. A monoid $M$ is \emph{Garside} if $M$ is Gaussian and it  contains a Garside element.  A group $G$ is  a \emph{Gaussian group} (respectively
a \emph{Garside group}) if there exists a Gaussian monoid $M$
(respectively a Garside monoid) such that $G$ is the fraction
group of $M$. A Gaussian monoid satisfies both left and right Ore's conditions, so it embeds in its group of fractions (see \cite{Clifford}).
As an example, braid groups  and Artin groups of finite type \cite{garside}, torus knot groups \cite{picantin_torus} are Garside groups.

\begin{defn}\cite[Defn.1.6]{deh_francais}\label{def_conditions}
Let $M$ be a monoid. $M$ satisfies:\\
 - $(C_{0})$ if $1$ is the unique invertible element in $M$.\\
- $(C_{1})$ if $M$ is left cancellative.\\
- $(\tilde{C_{1}})$ if $M$ is right cancellative.\\
- $(C_{2})$ if any two elements in $M$ with a right common multiple
admit
a right lcm.\\
- $(C_{3})$ if  $M$ has a finite generating set $P$ closed under
$\setminus$, i.e if $x,y \in P$ then $x \setminus y \in P$.
\end{defn}
\begin{thm}\cite[Prop. 2.1]{deh_francais} \label{gars_critere}
A monoid $M$ is a Garside monoid if and only if $M$ satisfies the
conditions $(C_{0})$, $(C_{1})$, $(\tilde{C_{1}})$, $(C_{2})$,
and  $(C_{3})$.
\end{thm}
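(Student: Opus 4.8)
We sketch a proof of Theorem~\ref{gars_critere}. Being an equivalence it has two halves, and only the converse carries real content. For the forward direction, suppose $M$ is Garside, that is, Gaussian with a Garside element $\Delta$. Atomicity forces $(C_{0})$: from $uv=1$ one gets $\parallel u\parallel+\parallel v\parallel=\parallel 1\parallel=0$, so $u=v=1$. A Gaussian monoid is left and right cancellative, which is $(C_{1})$ and $(\tilde{C_{1}})$, and any two of its elements admit a right lcm, so $(C_{2})$ holds \emph{a fortiori}. For $(C_{3})$ take $P$ to be the set of left divisors of $\Delta$: by definition of a Garside element this set is finite, generates $M$, and coincides with the set of right divisors of $\Delta$. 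It is closed under $\setminus$ because, for $x,y\in P$, the element $\Delta$ is a common right multiple of $x$ and $y$, so $x\vee y$ left divides $\Delta$; writing $\Delta=x(x\setminus y)c$ displays $(x\setminus y)c$ as a right divisor of $\Delta$, hence --- the two divisor sets of $\Delta$ being equal --- as a left divisor of $\Delta$, and therefore so is its left divisor $x\setminus y$.

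For the converse, assume $(C_{0})$--$(C_{3})$; the plan is to rebuild the Gaussian structure and a Garside element from the finite ``complement table'' on $P$. First, $(C_{0})$ with $(C_{1})$ guarantees --- as recalled just before Definition~\ref{def_conditions} --- that right lcms and right complements are unique where they exist, so $(x,y)\mapsto x\setminus y$ is a genuine operation, total on $P$ and $P$-valued by $(C_{3})$. I would present $M$ by the complemented presentation with generators $P\setminus\{1\}$ and relations $x\,(x\setminus y)=y\,(y\setminus x)$ for $x,y\in P$, and then bring in Dehornoy's theory of right word reversing: $(C_{2})$, the existence of right lcms for compatible pairs, supplies exactly what is needed to verify the \emph{cube condition} on the generating set $P$, and by the reversing criterion this makes the presentation \emph{complete}, so that right reversing computes right complements and right lcms correctly on arbitrary words. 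The left--right mirror of this argument, using $(\tilde{C_{1}})$ in place of $(C_{1})$, yields the analogous facts for left lcms and left complements.

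It then remains to produce a Garside element and to confirm Gaussianity. Since $P$ is finite and closed under $\setminus$, an iterated right-reversing computation over $P$ terminates and outputs the right lcm $\Delta:=\bigvee_{x\in P}x$; completeness of reversing equips each element of $M$ with a canonical (left-greedy) normal form whose factors are divisors of $\Delta$, exhibiting $M$ as atomic, and shows every element left divides some power of $\Delta$, so that any two elements have a common right --- and, by the mirror argument, a common left --- multiple. Hence all binary lcms exist; gcds exist as well (the largest common left, resp.\ right, divisor, available now that divisor sets are finite), so $M$ is Gaussian. Finally one checks that the left and right divisors of $\Delta$ coincide --- this is where $(\tilde{C_{1}})$ and the mirror reversing argument really enter --- that there are finitely many of them, and that they generate $M$; thus $\Delta$ is a Garside element and $M$ is Garside. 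The genuine obstacle is the middle step: showing that the \emph{local} data $(C_{2})+(C_{3})$ propagate through word reversing to \emph{global} lcm existence and to cancellation on reversed words. This is exactly the completeness machinery of \cite{deh_francais}, and it carries essentially all the weight; the surrounding bookkeeping --- atomicity, gcds, identifying the divisors of $\Delta$ --- is comparatively routine once reversing is known to behave correctly.
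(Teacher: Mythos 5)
The paper does not prove this statement: it is quoted verbatim, with attribution, from Dehornoy's \emph{Groupes de Garside} (the cited Prop.~2.1), and is used here purely as an imported criterion. So there is no in-paper proof to measure your attempt against; I can only assess the attempt on its own terms as a reconstruction of Dehornoy's argument.

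Your forward direction is correct and complete: the norm argument for $(C_{0})$, the definitional reading of $(C_{1})$, $(\tilde{C_{1}})$, $(C_{2})$, and the closure of the divisor set of $\Delta$ under $\setminus$ (via $x\vee y$ left-dividing $\Delta$ and the left/right divisor sets of $\Delta$ coinciding) are exactly the standard steps. The converse, however, is an outline rather than a proof, and its framing is slightly off target. Conditions $(C_{1})$ and $(C_{2})$ are hypotheses on the monoid $M$ itself, so there is nothing to ``re-derive'' by presenting $M$ by its complement table and invoking word reversing; the reversing/completeness machinery is what one uses to \emph{verify} these conditions for a presented monoid (as in Proposition~\ref{atomic_coh} of this paper), not to prove the implication $(C_{0})$--$(C_{3})\Rightarrow$ Garside. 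The steps that actually carry the weight of the converse --- that $M$ is atomic, that \emph{every} pair of elements has a right (and left) common multiple so that conditional lcms become unconditional, that left and right gcds exist, and that the left and right divisors of $\Delta=\bigvee_{x\in P}x$ coincide --- are each asserted in your sketch (``exhibits $M$ as atomic,'' ``shows every element left divides some power of $\Delta$,'' ``one checks that the left and right divisors of $\Delta$ coincide'') without argument. Since you yourself identify this middle portion as carrying ``essentially all the weight'' and then point back to \cite{deh_francais} for it, the proposal ultimately defers the content of the theorem to the very reference being proved. That is acceptable for a result the paper itself only cites, but it should be presented as a citation, not as a proof.
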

\subsection{Recognizing Garside monoids}

Let $X$ be an alphabet and denote by $\epsilon$ the empty word in $X^{*}$. Let $f$ be  a partial function of $X\times X$ into $X^{*}$, $f$ is
a \emph{complement} on $X$ if  $f(x,x)=\epsilon$ holds for every
$x$ in $X$, and $f(y,x)$ exists whenever $f(x,y)$ does.
The congruence on  $X^{*}$ generated by the pairs $(xf(x,y),yf(y,x))$ with $(x,y)$ in the domain of $f$ is denoted by  ``$\equiv^{+}$''. The monoid  \emph{associated with $f$} is  $X^{*}/ \equiv^{+}$ or in other words the monoid $\operatorname{Mon}\langle    X\mid xf(x,y)=yf(y,x)\rangle$ (with $(x,y)$ in the domain of $f$). The complement mapping considered so far is defined on letters
only. Its extension on words is called \emph{word reversing} (see \cite{deh_livre}).

\begin{ex}
\label{example_struct_gp} Let $M$ be the monoid generated by $X=\{x_{1},x_{2},x_{3},x_{4},x_{5}\}$ and defined by the following $10$  relations.\\
$\begin{array}{ccccc}
  x^{2}_{1}=x^{2}_{2} &  x_{2}x_{5}=x_{5}x_{2}  &x_{1}x_{2}=x_{3}x_{4} &  x_{1}x_{5}=x_{5}x_{1}&
x_{1}x_{3}=x_{4}x_{2}\\ x^{2}_{3}=x^{2}_{4} &
x_{2}x_{4}=x_{3}x_{1} & x_{3}x_{5}=x_{5}x_{3}&
x_{2}x_{1}=x_{4}x_{3}  &x_{4}x_{5}=x_{5}x_{4}
\end{array}$\\
 Then the complement $f$ is defined totally on $X \times X$ and the monoid associated to $f$,
 $X^{*} /\equiv^{+}$, is $M$.
As an example,   $f(x_{1},x_{2})=x_{1}$  and
$f(x_{2},x_{1})=x_{2}$ are obtained from the relation
$x^{2}_{1}=x^{2}_{2}$,  since it holds that $f(x_{1},x_{2})=x_{1}\setminus x_{2}$.

\end{ex}

Let $f$ be a complement on $X$. For $u,v,w \in X^{*}$, $f$ is \emph{coherent} at $(u,v,w)$ if either
$( (u
\setminus v ) \setminus (u \setminus w))
\setminus((v \setminus u) \setminus(v \setminus
w))\equiv^{+} \epsilon$ \ holds, or neither of the words $( (u
\setminus v )\setminus (u\setminus w)) ,    ((v
\setminus u)\setminus(v \setminus w))$ exists. The complement
 \emph{$f$ is coherent} if it is coherent at every triple $(u,v,w)$  with $u,v,w \in X^{*}$.
 Dehornoy shows that if the monoid is atomic then it is enough to show  the coherence of $f$ on its set of atoms. Moreover, he  shows that if  $M$ is a monoid associated with a coherent complement, then
$M$ satisfies $C_{1}$  and $C_{2}$ (see \cite[p.55]{deh_livre}).

\begin{prop} \cite[Prop.6.1]{deh_francais}\label{atomic_coh}
Let $M$ be a monoid associated with a complement $f$ and assume
that $M$ is atomic. Then $f$ is coherent if and only if $f$ is
coherent on $X$.
\end{prop}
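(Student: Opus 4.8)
The forward implication is immediate, since every atom of $M$ is a particular element of $X^{*}$: coherence of $f$ at all triples of words entails coherence at all triples of letters. All the content is in the converse, and the plan is to bootstrap coherence from letters to arbitrary words by a well-founded induction that uses word reversing to ``peel off'' one letter at a time from one of the three words. Since $M$ is atomic the norm $\parallel\cdot\parallel$ is finite on every element, so one may attach to a triple $(u,v,w)$ of words a complexity measure — the first attempt being the sum of the norms of the elements represented by $u$, $v$ and $w$ — and argue by induction on it. The base cases are the triples in which each of $u$, $v$, $w$ is empty or an atom: if one of them is empty, the coherence condition degenerates to $z\setminus z\equiv^{+}\epsilon$ for a suitable word $z$ (or else both of the words $(u\setminus v)\setminus(u\setminus w)$ and $(v\setminus u)\setminus(v\setminus w)$ fail to exist), which holds by the definition of a complement together with the remark that $z\setminus z$ reverses to $\epsilon$; and if all three are atoms, it is exactly the hypothesis that $f$ is coherent on $X$.

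For the inductive step, at least one of $u$, $v$, $w$ is not an atom; say this is $u$ — the two remaining cases being handled by the analogous reversing identities — and write $u=au'$ with $a$ an atom and $u'$ a nonempty word. The mechanism is the pair of word-reversing identities $(au')\setminus v = u'\setminus(a\setminus v)$ and $v\setminus(au') = (v\setminus a)\bigl((a\setminus v)\setminus u'\bigr)$, together with their analogues with $w$ in place of $v$; these hold at the level of reversing, with the standing convention that such an identity is asserted precisely when both its sides exist, and they express the complements of $u$ through complements of the strictly shorter words $a$ and $u'$. Substituting them into the ``cube word'' $\bigl((u\setminus v)\setminus(u\setminus w)\bigr)\setminus\bigl((v\setminus u)\setminus(v\setminus w)\bigr)$ and regrouping, one rewrites coherence of $f$ at $(au',v,w)$ as a formal consequence of its coherence at $(a,v,w)$, at the triple $(u',\,a\setminus v,\,a\setminus w)$ produced by the peeling, and at two further auxiliary triples that appear when the two resulting ``sub-cubes'' are glued along their common face. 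Geometrically, coherence at $(u,v,w)$ says that the reversing cube with edges $u$, $v$, $w$ issuing from a vertex closes up; subdividing the edge $u$ into $a$ followed by $u'$ splits that cube into two stacked sub-cubes, the induction hypothesis closes each, and the gluing conditions force the two closures to agree — a Newman-type passage from local to global confluence, with atomicity supplying the needed well-foundedness.

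The step I expect to be the real obstacle is making this induction genuinely terminate, i.e. exhibiting a complexity measure that provably decreases. The auxiliary words $a\setminus v$, $a\setminus w$, and the further complements that arise in the gluing, have a priori uncontrolled norm, so the naive ``sum of norms'' need not drop on $(u',\,a\setminus v,\,a\setminus w)$; one must either refine the measure — for instance, on triples with a common right multiple, weighting by $\parallel u\vee v\vee w\parallel$, which dominates the norms of all complements occurring, with a secondary lexicographic term — or reorganise the peeling so that some honest measure decreases at each step. Inseparable from this is the need to follow the second, ``non-existence'' alternative in the definition: one must check that, along the reversing identities, existence of a given right lcm in the big cube is equivalent to existence of the corresponding lcms in the two sub-cubes, so that whenever an lcm fails to exist the degenerate form of the coherence condition is still inherited. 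Once a well-founded measure is fixed and this existence/non-existence dictionary is in place, the inductive step is a finite — if intricate — reversing computation; combined with the earlier remark, it follows that a monoid associated with a complement coherent on its atoms is automatically left cancellative and admits right lcms of elements having a common right multiple, i.e. satisfies $(C_{1})$ and $(C_{2})$.
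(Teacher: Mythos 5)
This proposition is imported verbatim from Dehornoy \cite[Prop.~6.1]{deh_francais}; the paper offers no proof of its own, so there is no internal argument to compare yours against. On its own terms, your proposal has the right architecture: the forward direction is indeed immediate, the empty-word base cases are handled correctly (both sides of the cube condition collapse to $z\setminus z\equiv^{+}\epsilon$), and the reversing identities $(au')\setminus v=u'\setminus(a\setminus v)$ and $v\setminus(au')=(v\setminus a)\bigl((a\setminus v)\setminus u'\bigr)$ are exactly the tools by which Dehornoy peels a letter off one edge of the cube. So the plan is the standard one.

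But the proof is not complete, and you say so yourself: no well-founded complexity measure is actually exhibited, and that is not a peripheral detail --- it is the entire content of the proposition, since this is precisely where atomicity does its work. Your candidate $\parallel u\vee v\vee w\parallel$ has two problems. First, it need not be defined: the coherence condition has a ``neither side exists'' branch, and a triple with no common right multiple has no lcm to measure; you would have to treat that branch by a separate (and separately terminating) argument. Second, when the lcm does exist the measure is \emph{invariant} under your peeling --- the lcm of $(u',a\setminus v,a\setminus w)$, left-multiplied by $a$, is the lcm of $(au',v,w)$ --- so it can only be the leading component of a lexicographic measure whose decreasing secondary component you never specify. Similarly, the ``two further auxiliary triples that appear when the two sub-cubes are glued'' are left unnamed: the decomposition of the cube condition at $(au',v,w)$ into cube conditions at $(a,v,w)$ and $(u',a\setminus v,a\setminus w)$ is itself a lemma requiring proof, together with the existence/non-existence dictionary you mention. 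Until a genuinely decreasing measure and that gluing lemma are written down, what you have is an accurate description of the shape of the proof rather than a proof.
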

\begin{ex} In example \ref{example_struct_gp}, we  check if
$( (x_{1} \setminus
x_{2} )\setminus (x_{1}\setminus x_{3}))
\setminus((x_{2} \setminus x_{1})\setminus(x_{2}
\setminus x_{3}))= \epsilon$ holds in $M$. We have $x_{1}\setminus x_{2}=x_{1}$
and $x_{1}\setminus x_{3}=x_{2}$, so  $(x_{1}\setminus x_{2})\setminus(x_{1}\setminus x_{3})=x_{1}\setminus x_{2}=x_{1}$. Additionally, $x_{2}\setminus x_{1}=x_{2}$ and  $x_{2}\setminus x_{3}=x_{4}$, so $(x_{2}\setminus x_{1})\setminus(x_{2}\setminus x_{3})=
x_{2}\setminus x_{4}=x_{1}$. At last,
$((x_{1}\setminus x_{2})\setminus(x_{1}\setminus x_{3}))
\setminus
((x_{2}\setminus x_{1})\setminus(x_{2}\setminus x_{3}))=
x_{1}\setminus x_{1}=\epsilon $.
\end{ex}

\section{Structure groups are Garside}
\subsection{The structure group of a set-theoretical solution}
All the definitions and results in this subsection are from \cite{etingof}.

A set-theoretical solution  of the quantum Yang-Baxter equation is a pair $(X,S)$, where $X$ is a non-empty set and $S:X^{2}\rightarrow  X^{2}$ is  a bijection. Let  $S_{1}$ and $S_{2}$ denote the components of $S$, that is $S(x,y)=(S_{1}(x,y),S_{2}(x,y))$.
A pair $(X,S)$ is  \emph{nondegenerate} if the maps $X \rightarrow X$ defined by $x \mapsto S_{2}(x,y)$ and $x \mapsto S_{1}(z,x)$ are bijections for any fixed $y,z \in X$.  A pair $(X,S)$ is  \emph{braided} if $S$ satisfies the braid relation $S^{12}S^{23}S^{12}=S^{23}S^{12}S^{23}$, where the map $S^{ii+1}: X^{n} \rightarrow X^{n}$ is defined by $S^{ii+1}=id_{X^{i-1}} \times S\times id_{X^{n-i-1}} $, $i<n $.
A pair $(X,S)$ is \emph{involutive} if $S^{2}=id_{X^{2}}$, that is $S^{2}(x,y)=(x,y)$ for all $x,y \in X$.\\
Let $\alpha:X \times X \rightarrow X\times X$ be the permutation map, that is  $\alpha(x,y)=(y,x)$, and let $R=\alpha \circ S$. The map $R$ is called the \emph{$R-$matrix corresponding to $S$}.
 Etingof, Soloviev and Schedler show in \cite{etingof}, that $(X,S)$ is a braided set if and only
 if $R$ satisfies the quantum Yang-Baxter equation $R^{12}R^{13}R^{23}=R^{23}R^{13}R^{12}$, where
$R^{ij}$ means acting on the $i$th and $j$th components and that $(X,S)$ is a symmetric  set if and only if in addition $R$ satisfies the unitary condition $R^{21}R=1$. They define the  \emph{structure group $G$ of $(X,S)$} to be the group generated by the elements of $X$ and  with defining relations $xy=tz$ when $S(x,y)=(t,z)$. They show that if  $(X,S)$ is non-degenerate and braided then the assignment $x \rightarrow f_{x}$ is a right action of $G$ on $X$.\\

 We use the notation of \cite{etingof}, that is if  $X$ is  a finite set, then  $S$ is  defined by $S(x,y)=(g_{x}(y),f_{y}(x))$, $x,y$ in $X$.
 Here, if $X=\{x_{1},...,x_{n}\}$ is a finite set and $y=x_{i}$ for some $1\leq i\leq n$, then
 we write  $f_{i},g_{i}$ instead of $f_{y},g_{y}$  and $S(i,j)=(g_{i}(j),f_{j}(i))$. The following claim from \cite{etingof} translates the properties of a solution $(X,S)$ in terms of the functions $f_{i},g_{i}$ and it will be very useful in this paper.
\begin{claim}\label{debut_form}
(i) $S$ is non-degenerate $\Leftrightarrow$ $f_{i}$, $g_{i}$ are bijective, $1 \leq i \leq n$.\\
$(ii)$ $S$ is involutive $\Leftrightarrow$ $g_{g_{i}(j)}f_{j}(i)=i$  and $f_{f_{j}(i)}g_{i}(j)=j$,
$1 \leq i,j \leq n$.\\
$(iii)$ $S$ is braided  $\Leftrightarrow$ $g_{i}g_{j}=g_{g_{i}(j)}g_{f_{j}(i)}$, $f_{j}f_{i}=f_{f_{j}(i)}f_{g_{i}(j)}$,\\ and $f_{g_{f_{j}(i)}(k)}g_{i}(j)= g_{f_{g_{j}(k)}(i)}f_{k}(j)$, $1 \leq i,j,k \leq n$.
\end{claim}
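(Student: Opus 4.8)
\medskip

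The plan is to prove each of the three equivalences by directly translating the relevant property of $S$ through the defining formula $S(x,y)=(g_x(y),f_y(x))$, using repeatedly that two maps between cartesian powers of $X$ coincide precisely when they agree at every point. For $(i)$, one only has to observe that $S_1(z,x)=g_z(x)$ and $S_2(x,y)=f_y(x)$, so that for fixed $z$ the map $x\mapsto S_1(z,x)$ is exactly $g_z$ and for fixed $y$ the map $x\mapsto S_2(x,y)$ is exactly $f_y$; non-degeneracy asks precisely that all of these maps be bijections, i.e.\ that every $g_i$ and every $f_j$ be a bijection. For $(ii)$, I would compute $S^2$: from $S(i,j)=(g_i(j),f_j(i))$ and a second application of $S$,
\[
S^2(i,j)=S\bigl(g_i(j),f_j(i)\bigr)=\bigl(g_{g_i(j)}(f_j(i)),\,f_{f_j(i)}(g_i(j))\bigr),
\]
so that $S^2=\mathrm{id}_{X^2}$ is equivalent to $g_{g_i(j)}(f_j(i))=i$ and $f_{f_j(i)}(g_i(j))=j$ for all $i,j$, which is exactly $(ii)$.

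For $(iii)$, I would evaluate both sides of the braid relation $S^{12}S^{23}S^{12}=S^{23}S^{12}S^{23}$ on an arbitrary triple $(i,j,k)\in X^3$, applying $S(x,y)=(g_x(y),f_y(x))$ at each of the three steps and keeping track of which pair of coordinates $S^{ii+1}$ acts on. A short composition yields
\[
S^{12}S^{23}S^{12}(i,j,k)=\bigl(g_{g_i(j)}g_{f_j(i)}(k),\ f_{g_{f_j(i)}(k)}(g_i(j)),\ f_kf_j(i)\bigr)
\]
and
\[
S^{23}S^{12}S^{23}(i,j,k)=\bigl(g_ig_j(k),\ g_{f_{g_j(k)}(i)}f_k(j),\ f_{f_k(j)}f_{g_j(k)}(i)\bigr).
\]
Since $(i,j,k)$ ranges over all of $X^3$, equating the two triples coordinatewise gives exactly the three families of identities in the claim: the first coordinates give $g_ig_j=g_{g_i(j)}g_{f_j(i)}$, the last coordinates give $f_jf_i=f_{f_j(i)}f_{g_i(j)}$ (after renaming the indices), and the middle coordinates give $f_{g_{f_j(i)}(k)}g_i(j)=g_{f_{g_j(k)}(i)}f_k(j)$.

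The argument involves no genuine difficulty; the only step requiring care is the bookkeeping in $(iii)$ --- tracking which two coordinates each $S^{ii+1}$ touches, respecting the order of composition, and verifying that the identity read off from the middle coordinate is literally the third identity as written. This routine nature is why the statement is quoted from \cite{etingof} rather than proved in detail here.
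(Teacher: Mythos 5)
Your proof is correct: I checked the coordinatewise evaluation of $S^{12}S^{23}S^{12}$ and $S^{23}S^{12}S^{23}$ on $(i,j,k)$ and both displayed triples are right, so matching first, middle and last coordinates (with the index renaming you note for the last one) yields exactly the three identities of the claim, and parts $(i)$ and $(ii)$ are immediate as you say. The paper itself gives no proof of this statement --- it is quoted directly from \cite{etingof} --- so there is nothing to compare against; your direct verification is the standard argument and fills that gap accurately.
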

\begin{ex} Let$X=\{x_{1},x_{2},x_{3},x_{4},x_{5}\}$ and
$S(i,j)=(g_{i}(j),f_{j}(i))$. Assume \\
 $\begin{array}{ccc}
f_{1}=g_{1}=(1,2,3,4)(5) & f_{2}=g_{2}=(1,4,3,2)(5)\\
f_{3}=g_{3}=(1,2,3,4)(5) & f_{4}=g_{4}=(1,4,3,2)(5)
\end{array}$\\
Assume also that the functions $f_{5}$ and $g_{5}$ are the identity on $X$.
Then a case by case analysis  shows that  $(X,S)$ is a non-degenerate, involutive and braided solution. Its structure group is generated by $X$ and defined by the $10$  relations described in example \ref{example_struct_gp}.
\end{ex}
\subsection{Structure groups are Garside}
  In this subsection, we prove the following result.
\begin{thm}\label{theo:garside}
The structure group $G$ of a non-degenerate, braided and involutive
set-theoretical solution of the quantum Yang-Baxter equation is a Garside group.
\end{thm}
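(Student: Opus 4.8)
The plan is to verify Dehornoy's recognition criterion, Theorem~\ref{gars_critere}, for the \emph{structure monoid} $M=\operatorname{Mon}\langle X\mid x_ix_j=x_{g_i(j)}x_{f_j(i)}\rangle$, and then transfer the conclusion to $G$. Concretely, I would exhibit $M$ as the monoid associated with a suitable complement $f$ on $X$ (in the sense of \S2.2), check $(C_0)$, $(\tilde{C_1})$ and $(C_3)$ by hand, and deduce $(C_1)$ and $(C_2)$ from coherence of $f$ via Proposition~\ref{atomic_coh}; then Theorem~\ref{gars_critere} gives that $M$ is a Garside monoid. Since a Gaussian monoid satisfies both Ore conditions, $M$ embeds in its group of fractions, which is exactly $G$, so $G$ is a Garside group.

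First, the defining relations are length-preserving, so every word representing a given element of $M$ has the same length; hence $M$ is atomic with $X$ as its set of atoms and with norm equal to word length. In particular the norm is additive, so $1$ is the only invertible element, i.e.\ $(C_0)$ holds, and Proposition~\ref{atomic_coh} becomes available. Next I describe the right complement on generators. Fix $i\neq j$; by non-degeneracy $g_i$ is a bijection, so there is a unique $k$ with $g_i(k)=j$, and the defining relation reads $x_ix_k=x_jx_{f_k(i)}$. Set $f(x_i,x_j)=x_k=x_{g_i^{-1}(j)}$ and $f(x_i,x_i)=\epsilon$. This is visibly a complement on $X$, and the presentation $\operatorname{Mon}\langle X\mid x_if(x_i,x_j)=x_jf(x_j,x_i)\rangle$ coincides with that of $M$; the only thing to check is $f(x_j,x_i)=x_{f_k(i)}$ for $k=g_i^{-1}(j)$, i.e.\ $g_j(f_k(i))=i$ whenever $g_i(k)=j$, which is precisely the involutivity identity of Claim~\ref{debut_form}$(ii)$. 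Since all values of $f$ lie in $X\cup\{\epsilon\}$, the set $X$ is closed under $\setminus$, giving $(C_3)$.

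The heart of the argument is the coherence of $f$ on $X$: for all $i,j,k$,
$$\bigl((x_i\setminus x_j)\setminus(x_i\setminus x_k)\bigr)\setminus\bigl((x_j\setminus x_i)\setminus(x_j\setminus x_k)\bigr)\equiv^{+}\epsilon .$$
Using $x_a\setminus x_b=x_{g_a^{-1}(b)}$ for $a\neq b$ and $x_a\setminus x_a=\epsilon$, both outer bracketed words reduce to a single letter (or to $\epsilon$ in the degenerate cases where two of $i,j,k$ coincide), and the displayed identity turns into an equality of two elements of $X$ expressed through compositions of the $g$'s, $f$'s and their inverses. A direct unwinding shows this equality is equivalent to the braid relations $g_ig_j=g_{g_i(j)}g_{f_j(i)}$, $f_jf_i=f_{f_j(i)}f_{g_i(j)}$ and the mixed relation $f_{g_{f_j(i)}(k)}g_i(j)=g_{f_{g_j(k)}(i)}f_k(j)$ of Claim~\ref{debut_form}$(iii)$, which hold because $(X,S)$ is braided. \textbf{This computation --- carefully reversing the nested complements and matching the result with Claim~\ref{debut_form}$(iii)$, plus checking the several degenerate configurations of $i,j,k$ --- is the main obstacle.} By Proposition~\ref{atomic_coh}, coherence of $f$ on $X$ yields coherence of $f$, hence (see \S2.2) $M$ satisfies $(C_1)$ and $(C_2)$.

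It remains to establish $(\tilde{C_1})$, right cancellativity, which is not handed to us by coherence of the right complement. Rather than arguing directly, I would pass to the opposite monoid $M^{\mathrm{op}}$: reading the defining relations of $M$ from the right and using $S^{-1}=S$, one identifies $M^{\mathrm{op}}$ with the structure monoid of the solution $S'=\alpha\circ S\circ\alpha$ (where $\alpha(x,y)=(y,x)$), whose defining functions are $g'_i=f_i$, $f'_i=g_i$; the identities of Claim~\ref{debut_form} for $S'$ are exactly those for $S$ with the roles of the indices interchanged, so $S'$ is again non-degenerate, involutive and braided. Applying the already-established fact that such structure monoids satisfy $(C_1)$ to $M^{\mathrm{op}}$, we get that $M^{\mathrm{op}}$ is left cancellative, i.e.\ $M$ is right cancellative. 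Now $(C_0),(C_1),(\tilde{C_1}),(C_2),(C_3)$ all hold, so $M$ is a Garside monoid by Theorem~\ref{gars_critere}, and therefore $G$ is a Garside group.
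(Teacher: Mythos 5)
Your proposal follows essentially the same route as the paper: verify Dehornoy's criterion (Theorem~\ref{gars_critere}) for the structure monoid, read off the right complement on generators as $x_i\setminus x_j=x_{g_i^{-1}(j)}$ (Lemma~\ref{form_compl}), and reduce coherence on $X$ to the braided hypothesis. The step you single out as the main obstacle is exactly where the paper's work lies, and it is lighter than your phrasing suggests: the two nested complements collapse to $g^{-1}_{g_i^{-1}(k)}g_i^{-1}(m)$ and $g^{-1}_{g_k^{-1}(i)}g_k^{-1}(m)$, so coherence amounts to the single identity $g_ig_{g_i^{-1}(k)}=g_kg_{g_k^{-1}(i)}$, which follows from the braided relation $g_ig_j=g_{g_i(j)}g_{f_j(i)}$ together with the involutivity consequence $g_k^{-1}(i)=f_{g_i^{-1}(k)}(i)$ (Lemma~\ref{formule}); the other two identities of Claim~\ref{debut_form}$(iii)$ are not needed for right coherence (they only enter for the converse direction in Section~6). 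Where you genuinely diverge is right cancellativity: the paper outsources $(\tilde{C_1})$ to \cite[Lemma 4.1]{gateva+van}, whereas your identification of $M^{\mathrm{op}}$ with the structure monoid of $\alpha\circ S\circ\alpha$ --- which is again non-degenerate, involutive and braided because $R^{21}=R^{-1}$ for involutive solutions --- is a correct, self-contained alternative. One small ordering point: the lcm-theoretic $x\setminus y$ coincides with the syntactic complement $f(x,y)$ only once $(C_1)$ and $(C_2)$ are established, which is why the paper derives $(C_3)$ after coherence rather than before; your argument survives this reordering, but the claim ``$X$ is closed under $\setminus$'' should formally be deferred until coherence is in place.
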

In order to prove that the group $G$ is a Garside group, we
 show that \emph{the monoid $M$ with the same presentation} is
a Garside monoid. For that, we  use the Garsidity criterion
given in  Theorem \ref{gars_critere}, that is we show that
$M$ satisfies the conditions $(C_{0})$ , $(C_{1})$ , $(C_{2})$,
$(C_{3})$, and  $(\tilde{C_{1}})$. We refer the reader to \cite[Lemma 4.1]{gateva+van} for the proof that $M$ is right cancellative ($M$  satisfies $(\tilde{C_{1}})$).
We first show that $M$ satisfies the conditions $(C_{0})$. In order to do that,  we describe the defining relations in $M$ and as they are length-preserving, this implies that $M$ is atomic.
\begin{claim}\label{cl_compl}
Assume $(X,S)$ is non-degenerate.
Let $x_{i}$ and $x_{j}$ be different elements in $X$.  Then
there is exactly one defining relation $x_{i} a= x_{j} b$, where
$a,b$ are in $X$.
If in addition, $(X,S)$ is involutive then $a$ and $b$ are different.
\end{claim}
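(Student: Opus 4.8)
The plan is to read off both assertions directly from the explicit list of defining relations of $M$. Using the notation $S(i,j)=(g_i(j),f_j(i))$ of Claim~\ref{debut_form}, the defining relations of $M$ are exactly the non-trivial relations $x_ix_j=x_{g_i(j)}x_{f_j(i)}$, $1\le i,j\le n$. Fix distinct $i,j$. A defining relation whose two sides begin respectively with $x_i$ and with $x_j$ can a priori arise either from a slot $(i,\,\cdot\,)$ or from a slot $(j,\,\cdot\,)$; but since $(X,S)$ is involutive, $S(i,a)=(j,b)$ holds if and only if $S(j,b)=(i,a)$, so these two possibilities describe one and the same relation $x_ix_a=x_jx_b$. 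It therefore suffices to look at the relations coming from slots $(i,a)$.

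For existence I would invoke non-degeneracy (Claim~\ref{debut_form}(i)): $g_i$ is a bijection, so there is a unique index $a$ with $g_i(a)=j$, namely $a=g_i^{-1}(j)$; putting $b=f_a(i)$ one has $S(i,a)=(j,b)$, and since $i\ne j$ the relation $x_ix_a=x_jx_b$ is a genuine defining relation of the required shape. For uniqueness, if $x_ix_{a'}=x_jx_{b'}$ is any such relation then, by the first paragraph, $S(i,a')=(j,b')$, hence $g_i(a')=j$; injectivity of $g_i$ gives $a'=a=g_i^{-1}(j)$, and then $b'=f_{a'}(i)=b$. So the relation $x_ix_a=x_jx_b$ is unique.

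Finally, to obtain $a\ne b$ under the involutive hypothesis, suppose $a=b$. From $S(i,a)=(j,a)$ one reads off $f_a(i)=a$; applying $S$ once more and using $S^{2}=\operatorname{id}$ yields $S(j,a)=(i,a)$, whence $f_a(j)=a$ as well. Since $i\ne j$, this contradicts the injectivity of the bijection $f_a$ (Claim~\ref{debut_form}(i)). Alternatively, $a=b$ would force the relation to read $x_ix_a=x_jx_a$, so that $x_i=x_j$ in $M$ by right cancellativity, contradicting length-preservation of the relations (a single generator is $\equiv^{+}$-equivalent only to itself). I expect the only point requiring care to be the observation in the first paragraph — that a relation linking $x_i$ and $x_j$ is not counted twice, once by a slot $(i,\,\cdot\,)$ and once by a slot $(j,\,\cdot\,)$ — and it is precisely there, together with the short contradiction for $a\ne b$, that involutivity is used.
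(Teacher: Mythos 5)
Your argument is correct, but it is a genuinely different route from the paper's, for the simple reason that the paper gives no proof at all: it defers to \cite[Thm.~1.1]{gateva+van}. Your self-contained derivation from Claim~\ref{debut_form} is therefore a useful addition rather than a paraphrase. Two remarks on the comparison. First, your observation in the opening paragraph is not mere pedantry: a relation whose sides begin with $x_{i}$ and $x_{j}$ really can arise from the slot $(i,\cdot)$ or from the slot $(j,\cdot)$, and without involutivity these need not coincide. (In the non-involutive permutation example of Section~7, with $f=(1,4)(2,3)$ and $g=(1,2)(3,4)$, both $x_{1}x_{5}=x_{5}x_{4}$ and $x_{1}x_{5}=x_{5}x_{2}$ are defining relations.) So the ``exactly one'' assertion, as literally stated under non-degeneracy alone, requires either the oriented reading of a defining relation (left side $=$ input of $S$) or the involutivity you invoke; your proof makes explicit a hypothesis the statement leaves implicit, and this matches the setting of \cite{gateva+van}, where solutions are involutive throughout. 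Second, your primary argument for $a\neq b$ (injectivity of $f_{a}$ via $S^{2}=\mathrm{id}$) is clean and uses nothing not yet available; the ``alternative'' via right cancellativity should be dropped or demoted, since at this point in the paper right cancellativity is itself only known by citation to \cite{gateva+van}, which would make the argument circular in spirit. With the alternative set aside, the proof is complete and correct.
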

For a proof of this result, see  \cite[Thm. 1.1]{gateva+van}.
Using the same arguments, if $(X,S)$ is non-degenerate and involutive there are no relations of the form $ax_{i}=ax_{j}$, where $i \neq j$.
We have the following direct result from claim \ref{cl_compl}.
\begin{prop} \label{complement_f_defined}
Assume $(X,S)$ is non-degenerate and involutive. Then the complement $f$ is totally defined on $X \times X$, its range is $X$ and the monoid associated to $f$ is $M$. Moreover,  $M$ is atomic.
\end{prop}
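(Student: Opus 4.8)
The plan is to read the complement $f$ straight off the defining relations of $M$ and then verify that nothing is lost in passing between the two presentations. For $x_{i}=x_{j}$ set $f(x_{i},x_{i})=\epsilon$, as the definition of a complement demands. For $x_{i}\neq x_{j}$, Claim~\ref{cl_compl} furnishes \emph{exactly one} defining relation of the shape $x_{i}a=x_{j}b$ with $a,b\in X$; I define $f(x_{i},x_{j})=a$ and $f(x_{j},x_{i})=b$. The uniqueness part of Claim~\ref{cl_compl} is precisely what makes this well defined, and since the same relation also reads $x_{j}b=x_{i}a$ the clause ``$f(x_{j},x_{i})$ exists whenever $f(x_{i},x_{j})$ does'' holds automatically; hence $f$ is total on $X\times X$. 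Because such a relation $x_{i}a=x_{j}b$ arises from $S(x_{i},a)=(x_{j},b)$, i.e.\ $g_{x_{i}}(a)=x_{j}$, one even reads off the explicit formula $x_{i}\setminus x_{j}=f(x_{i},x_{j})=g_{x_{i}}^{-1}(x_{j})$ for $i\neq j$; in particular $f(x_{i},x_{j})\in X$, and letting $x_{j}$ vary while using the non-degeneracy (bijectivity of each $g_{i}$) shows the range of $f$ is all of $X$.

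Next I would check that the monoid $\operatorname{Mon}\langle X\mid x\,f(x,y)=y\,f(y,x)\rangle$ associated with $f$ is $M$, i.e.\ that the two families of relations generate the same congruence on $X^{*}$. In one direction this is immediate: for $i\neq j$ the relation $x_{i}\,f(x_{i},x_{j})=x_{j}\,f(x_{j},x_{i})$ is by construction one of the defining relations of $M$, and for $i=j$ it is the trivial relation $x_{i}=x_{i}$. For the converse, let $xy=tz$ be a defining relation of $M$, coming from $S(x,y)=(t,z)$. If $x=t$ the relation has the form $xy=xz$; by the remark recorded just after Claim~\ref{cl_compl} there is no such relation with $y\neq z$, so $y=z$ and the relation is trivial. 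If $x\neq t$, then by Claim~\ref{cl_compl} the \emph{unique} defining relation with first letters $x$ and $t$ is $x\,f(x,t)=t\,f(t,x)$, which forces $y=f(x,t)$ and $z=f(t,x)$, so $xy=tz$ is literally one of the relations of the presentation associated with $f$. Hence the two presentations coincide and the monoid associated with $f$ is $M$.

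Finally, atomicity needs no new argument: every defining relation of $M$ is length-preserving (each side of a nontrivial one has length $2$), so all words representing a given element of $M$ share a common length, every element has finite norm, no generator can equal $1$, and a factorization $x_{i}=yz$ forces $y=1$ or $z=1$ for length reasons; thus $X$ is exactly the set of atoms and $M$ is generated by its atoms with finite norms --- this is the observation already recorded in Section~2. The only point in the whole argument that uses more than unwinding definitions is the exclusion of defining relations with a repeated first letter, and that is precisely what the remark following Claim~\ref{cl_compl} (no relation $a x_{i}=a x_{j}$ with $i\neq j$, itself a consequence of non-degeneracy and involutivity) provides; I expect this --- rather than any computation --- to be the crux.
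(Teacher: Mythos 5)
Your proposal is correct and follows the same route the paper takes: the paper presents this proposition as a direct consequence of Claim~\ref{cl_compl} (together with the remark excluding relations $ax_i=ax_j$, $i\neq j$, and the general observation from Section~2 that length-preserving relations imply atomicity), and your write-up simply makes those deductions explicit. The only addition is your careful check that the two presentations generate the same congruence, which is a faithful elaboration of what the paper leaves implicit rather than a different argument.
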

Now, we show that $M$ satisfies the conditions $(C_{1})$,  $(C_{2})$  and $(C_{3})$.
From Proposition \ref{complement_f_defined}, we have that there is a one-to-one correspondence  between the complement $f$ and the monoid $M$ with the same presentation as the structure group, so we  say that $M$ is coherent (by abuse of notation). In order to show that the monoid $M$  satisfies the conditions
$(C_{1})$ and $(C_{2})$, we  show that $M$ is coherent  (see \cite[p.55]{deh_livre}).
Since $M$ is atomic, it is enough to check its coherence on $X$
(from Proposition \ref{atomic_coh}).
We show that  any triple of generators $(a,b,c)$   satisfies the following equation:
 $ (a \setminus b )\setminus (a\setminus c)=
(b \setminus a)\setminus(b \setminus c)$,  where  the equality is in the free monoid $X^{*}$,
 since  the range of $f$  is $X$. In  the following lemma, we establish a correspondence between the right complement of generators and the functions $g_{i}$ that define $(X,S)$.
\begin{lem}\label{form_compl}
Assume $(X,S)$ is non-degenerate. Let $x_{i}, x_{j}$ be different elements in $X$. Then
$x_{i}\setminus x_{j}= g^{-1}_{i}(j)$.
\end{lem}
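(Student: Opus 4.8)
The plan is to unwind the definition of the word-reversing complement on letters and match it against the explicit list of defining relations of $M$. Recall that the defining relations of the structure group, hence of $M$, are exactly the relations $x_i x_j = x_{g_i(j)} x_{f_j(i)}$ read off from $S(i,j) = (g_i(j), f_j(i))$. By Proposition~\ref{complement_f_defined} the complement $f$ is totally defined on $X\times X$ with range in $X$, and the defining relations of $M$ are precisely the words $x_i\,f(x_i,x_j) = x_j\,f(x_j,x_i)$; combining this with Claim~\ref{cl_compl} (which uses non-degeneracy), $x_i\setminus x_j = f(x_i,x_j)$ is exactly the letter $a$ following $x_i$ in the \emph{unique} defining relation of the form $x_i\,a = x_j\,b$.

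First I would produce that relation explicitly. Since $(X,S)$ is non-degenerate, $g_i$ is a bijection by Claim~\ref{debut_form}(i), so set $l = g_i^{-1}(j)$. The defining relation attached to the pair $(i,l)$ is $x_i x_l = x_{g_i(l)} x_{f_l(i)}$, that is,
\[
x_i\,x_{g_i^{-1}(j)} \;=\; x_j\,x_{f_{g_i^{-1}(j)}(i)} ,
\]
which is non-trivial because $i \neq j$ and which has exactly the shape $x_i\,a = x_j\,b$ with $a = x_{g_i^{-1}(j)}$. By the uniqueness part of Claim~\ref{cl_compl} this is the only defining relation of that shape, hence $f(x_i,x_j) = x_{g_i^{-1}(j)}$. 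Identifying as usual the generator $x_k$ with its index $k$ --- legitimate since the range of $f$ is $X$ --- this is the claimed equality $x_i\setminus x_j = g_i^{-1}(j)$.

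I do not expect a real obstacle here; the only point requiring care is to rule out a \emph{second} defining relation of the form $x_i\,a = x_j\,b$ (obtained, say, by reading some relation from right to left), which would make $f(x_i,x_j)$ ambiguous. This is exactly what the uniqueness clause of Claim~\ref{cl_compl}, quoted from \cite[Thm.~1.1]{gateva+van}, forbids, so once that claim is invoked the argument is immediate. Observe that only non-degeneracy is used, matching the hypothesis of the lemma; involutivity intervenes in the surrounding discussion only to guarantee that the two letters $a$ and $b$ are distinct.
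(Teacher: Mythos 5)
Your proposal is correct and follows essentially the same route as the paper: both identify the unique defining relation $x_i a = x_j b$ guaranteed by Claim~\ref{cl_compl} and read off $a = g_i^{-1}(j)$ from $S(i,a) = (g_i(a), f_a(i))$. You merely run the computation in the opposite direction (starting from $l = g_i^{-1}(j)$ rather than solving for $a$) and are somewhat more explicit about invoking uniqueness.
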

\begin{proof}
If $S(i,a)=(j,b)$, then $x_{i}\setminus x_{j}=a$.
But by definition of $S$, we have that $S(i,a)=(g_{i}(a),f_{a}(i))$,
 so  $g_{i}(a)=j$
 which gives $a=g_{i}^{-1}(j)$.
\end{proof}
\begin{lem}\label{formule}
Assume $(X,S)$ is non-degenerate and involutive. Let $x_{i}, x_{k}$ be elements in $X$.  Then
$g^{-1}_{k}(i)=f_{g^{-1}_{i}(k)}(i)$
\end{lem}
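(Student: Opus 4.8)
The plan is to read off the identity directly from the functional reformulation of involutivity recorded in Claim \ref{debut_form}$(ii)$, using the bijectivity of the maps $g_{i}$ that non-degeneracy provides (Claim \ref{debut_form}$(i)$).

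First I would introduce the change of variable $j := g_{i}^{-1}(k)$, which is legitimate precisely because $g_{i}$ is a bijection, so that $g_{i}(j)=k$. The purpose of this substitution is to rewrite the left-hand side $f_{g_{i}^{-1}(k)}(i)$ in the form $f_{j}(i)$ with $g_{i}(j)=k$, which is exactly the combination of symbols occurring in the involutivity relation.

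Next I would invoke the first equation of Claim \ref{debut_form}$(ii)$, namely $g_{g_{i}(j)}\big(f_{j}(i)\big)=i$, and substitute $g_{i}(j)=k$ to get $g_{k}\big(f_{j}(i)\big)=i$. Since $g_{k}$ is again a bijection by non-degeneracy, applying $g_{k}^{-1}$ to both sides yields $f_{j}(i)=g_{k}^{-1}(i)$; unwinding $j=g_{i}^{-1}(k)$ then gives $f_{g_{i}^{-1}(k)}(i)=g_{k}^{-1}(i)$, which is the assertion of the lemma.

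I do not anticipate a real obstacle: the only things needing care are (a) justifying that the substitution $j=g_{i}^{-1}(k)$ and the final inversion of $g_{k}$ are valid, which is guaranteed by non-degeneracy, and (b) parsing the nested-function notation of Claim \ref{debut_form}$(ii)$ correctly as $g_{g_{i}(j)}\big(f_{j}(i)\big)$. As an alternative route one could argue through the complement, since by Lemma \ref{form_compl} we have $x_{i}\setminus x_{k}=g_{i}^{-1}(k)$, and then track the second coordinate of $S$ across the relation $x_{i}(x_{i}\setminus x_{k})=x_{k}(x_{k}\setminus x_{i})$ using $S^{2}=\mathrm{id}$; but the direct functional computation above is shorter and self-contained.
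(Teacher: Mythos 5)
Your proposal is correct and is essentially identical to the paper's own proof: both substitute $j=g_{i}^{-1}(k)$ into the involutivity identity $g_{g_{i}(j)}f_{j}(i)=i$ from Claim \ref{debut_form}$(ii)$ and then invert $g_{k}$, which non-degeneracy permits. No further comment is needed.
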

\begin{proof}
Since $S$ is involutive, we have from claim \ref{debut_form} that for every $x_{i}, x_{j} \in X$, $g_{g_{i}(j)}f_{j}(i)=i$. We replace in this formula  $j$  by $g^{-1}_{i}(k)$ for some $1 \leq k \leq n$, then we obtain $i= g_{g_{i}(g^{-1}_{i}(k))}f_{g^{-1}_{i}(k)}(i)=g_{k}f_{g^{-1}_{i}(k)}(i)$.
So, we have $g^{-1}_{k}(i)=f_{g^{-1}_{i}(k)}(i)$.
\end{proof}
\begin{prop}\label{M_c1&c2}
Assume $(X,S)$ is non-degenerate, involutive and braided. Every triple  $(x_{i},x_{k},x_{m})$ of generators satisfies the following equation: $ (x_{i} \setminus x_{k} )\setminus (x_{i}\setminus x_{m})
=(x_{k} \setminus x_{i})\setminus(x_{k} \setminus
x_{m})$. Furthermore,  $M$ is coherent and satisfies the conditions $(C_{1})$ and $(C_{2})$.
\end{prop}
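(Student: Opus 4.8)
The plan is to translate the identity
$(x_{i}\setminus x_{k})\setminus(x_{i}\setminus x_{m})=(x_{k}\setminus x_{i})\setminus(x_{k}\setminus x_{m})$
entirely into a statement about the functions $g_{\ell}$ by repeated application of Lemma~\ref{form_compl}, and then verify that statement using the braid relations for $g$ from Claim~\ref{debut_form}$(iii)$ together with the bridging identity of Lemma~\ref{formule}. First I would note that by Lemma~\ref{form_compl} the left-hand inner complements are $x_{i}\setminus x_{k}=g_{i}^{-1}(k)$ and $x_{i}\setminus x_{m}=g_{i}^{-1}(m)$, and the outer complement is then $g_{i}^{-1}(k)\setminus g_{i}^{-1}(m)=g_{g_{i}^{-1}(k)}^{-1}\!\bigl(g_{i}^{-1}(m)\bigr)$ whenever the two inner results are distinct. (The case where they coincide, i.e. $k=m$, is trivial, and one should also dispose separately of the degenerate cases $i=k$, $i=m$, $k=m$ where one of the one-letter complements is $\epsilon$; in all of these both sides are easily seen to agree.) Symmetrically, the right-hand side equals $g_{g_{k}^{-1}(i)}^{-1}\!\bigl(g_{k}^{-1}(m)\bigr)$. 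So the proposition reduces to the purely set-theoretic identity
\[
g_{g_{i}^{-1}(k)}^{-1}\bigl(g_{i}^{-1}(m)\bigr)=g_{g_{k}^{-1}(i)}^{-1}\bigl(g_{k}^{-1}(m)\bigr),\qquad 1\le i,k,m\le n .
\]

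To prove this identity I would work with the $g$-braid relation $g_{a}g_{b}=g_{g_{a}(b)}g_{f_{b}(a)}$ from Claim~\ref{debut_form}$(iii)$, read in the convenient substituted form: setting $a=g_{i}^{-1}(k)$ and $b=i$ gives $g_{g_{i}^{-1}(k)}\,g_{i}=g_{k}\,g_{f_{i}(g_{i}^{-1}(k))}$, and here Lemma~\ref{formule} (or a direct involutivity computation as in its proof) identifies the lower index $f_{i}(g_{i}^{-1}(k))$ with $g_{k}^{-1}(i)$. Hence
\[
g_{g_{i}^{-1}(k)}\,g_{i}=g_{k}\,g_{g_{k}^{-1}(i)} ,
\]
a single permutation identity that is symmetric in $i$ and $k$ up to the obvious swap. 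Now apply the inverse of each side to $m$: from the left description, $(g_{g_{i}^{-1}(k)}\,g_{i})^{-1}(m)=g_{i}^{-1}\bigl(g_{g_{i}^{-1}(k)}^{-1}(m)\bigr)$, which is not quite the shape I want, so instead I would feed $g_{i}^{-1}(m)$ (respectively $g_{k}^{-1}(m)$) through the appropriate halves: from $g_{g_{i}^{-1}(k)}\,g_{i}=g_{k}\,g_{g_{k}^{-1}(i)}$ one gets $g_{g_{i}^{-1}(k)}^{-1}\,g_{k}=g_{i}\,g_{g_{k}^{-1}(i)}^{-1}$, and evaluating both sides at $m$ yields exactly $g_{g_{i}^{-1}(k)}^{-1}(g_{k}(m))=g_{i}(g_{g_{k}^{-1}(i)}^{-1}(m))$; replacing $m$ by $g_{k}^{-1}g_{i}^{-1}$ applied suitably, or more cleanly rearranging to isolate the two nested-inverse expressions, produces the displayed identity above. (I will want to be a little careful about which rearrangement lands on precisely $g_{g_{i}^{-1}(k)}^{-1}(g_{i}^{-1}(m))$; the point is that the operator identity $g_{g_{i}^{-1}(k)}\,g_{i}=g_{k}\,g_{g_{k}^{-1}(i)}$ is exactly the right amount of information, since inverting it gives $g_{i}^{-1}\,g_{g_{i}^{-1}(k)}^{-1}=g_{g_{k}^{-1}(i)}^{-1}\,g_{k}^{-1}$, and both sides applied to $m$ give the two expressions in the wanted identity after one more left-multiplication by $g_{i}$ and relabelling.)

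Once the equation $(x_{i}\setminus x_{k})\setminus(x_{i}\setminus x_{m})=(x_{k}\setminus x_{i})\setminus(x_{k}\setminus x_{m})$ is established for every triple of generators, coherence of $f$ on $X$ is immediate: the defining coherence condition asks for $\bigl((x_{i}\setminus x_{k})\setminus(x_{i}\setminus x_{m})\bigr)\setminus\bigl((x_{k}\setminus x_{i})\setminus(x_{k}\setminus x_{m})\bigr)\equiv^{+}\epsilon$, and since the two arguments are equal words in $X^{*}$ (using that the range of $f$ is $X$, by Proposition~\ref{complement_f_defined}), their complement is $\epsilon$. By Proposition~\ref{atomic_coh}, coherence on the atoms $X$ upgrades to coherence of $f$, and then by the cited result of Dehornoy (\cite[p.55]{deh_livre}) the monoid $M$ associated with a coherent complement satisfies $(C_{1})$ and $(C_{2})$. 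I expect the main obstacle to be purely bookkeeping: keeping the composition order of the permutations and the nesting of inverses straight so that the braid relation for $g$ and the involutivity identity of Lemma~\ref{formule} assemble into exactly the required nested-inverse equation, rather than a cousin of it. The conceptual content is entirely contained in Claim~\ref{debut_form}; no new idea beyond careful substitution should be needed.
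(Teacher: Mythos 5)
Your overall strategy is the paper's: reduce the coherence identity via Lemma~\ref{form_compl} to the nested-inverse identity $g^{-1}_{g_i^{-1}(k)}\bigl(g_i^{-1}(m)\bigr)=g^{-1}_{g_k^{-1}(i)}\bigl(g_k^{-1}(m)\bigr)$, derive that from the $g$-braid relation of Claim~\ref{debut_form}$(iii)$ together with Lemma~\ref{formule}, and then use atomicity and Proposition~\ref{atomic_coh} to upgrade coherence on $X$ to coherence of $M$. The reduction and the concluding step are fine. But the key middle step contains a genuine error, not merely bookkeeping to be finished later. Substituting $a=g_i^{-1}(k)$, $b=i$ into $g_ag_b=g_{g_a(b)}g_{f_b(a)}$ gives $g_{g_i^{-1}(k)}\,g_i=g_{g_{g_i^{-1}(k)}(i)}\,g_{f_i(g_i^{-1}(k))}$, and the first index on the right is $g_{g_i^{-1}(k)}(i)$, which is not $k$ in general; so the identity $g_{g_i^{-1}(k)}\,g_i=g_k\,g_{f_i(g_i^{-1}(k))}$ that you write down does not follow from that substitution. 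Moreover, Lemma~\ref{formule} identifies $g_k^{-1}(i)$ with $f_{g_i^{-1}(k)}(i)$ (the function indexed by $g_i^{-1}(k)$, evaluated at $i$), not with $f_i(g_i^{-1}(k))$: you have swapped subscript and argument.

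These two slips leave you with the operator identity $g_{g_i^{-1}(k)}\,g_i=g_k\,g_{g_k^{-1}(i)}$, whose left-hand factor order is the wrong one for the job. What the nested-inverse identity requires is $g^{-1}_{g_i^{-1}(k)}\circ g_i^{-1}=g^{-1}_{g_k^{-1}(i)}\circ g_k^{-1}$, equivalently (taking inverses) $g_i\circ g_{g_i^{-1}(k)}=g_k\circ g_{g_k^{-1}(i)}$. Your closing parenthetical correctly senses the mismatch, but no ``rearrangement'' of $g_{g_i^{-1}(k)}\,g_i=g_k\,g_{g_k^{-1}(i)}$ yields the needed statement without assuming the permutations commute. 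The fix is to substitute the other way, exactly as the paper does: apply $g_ig_j=g_{g_i(j)}g_{f_j(i)}$ with $j=g_i^{-1}(k)$, so that the first index on the right collapses to $g_i\bigl(g_i^{-1}(k)\bigr)=k$ on the nose, giving $g_i\,g_{g_i^{-1}(k)}=g_k\,g_{f_{g_i^{-1}(k)}(i)}$; then Lemma~\ref{formule} replaces $f_{g_i^{-1}(k)}(i)$ by $g_k^{-1}(i)$, and the required identity follows immediately by inverting both sides, with no further manipulation.
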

\begin{proof}
If $x_{i}=x_{k}$ or $x_{i}=x_{m}$ or $x_{k}=x_{m}$, then the equality holds trivially. So, assume that  $(x_{i},x_{k},x_{m})$ is a triple of different generators. This implies that $g_{i}^{-1} (k) \neq g_{i}^{-1} (m)$ and $g^{-1}_{k}(i) \neq g^{-1}_{k}(m)$, since the functions $g_{i}$ are bijective. Using the formulas for all different $1 \leq i,k,m \leq n$ from lemma \ref{form_compl}, we have:  $ (x_{i} \setminus x_{k} )\setminus (x_{i}\setminus x_{m})=
  g^{-1}_{x_{i} \setminus x_{k}} (x_{i}\setminus x_{m})=
   g^{-1}_{g_{i}^{-1}(k)} g_{i}^{-1} (m)$ and $ (x_{k} \setminus x_{i} )\setminus (x_{k}\setminus
x_{m})=
 g^{-1}_{x_{k} \setminus x_{i}} (x_{k}\setminus x_{m})=
 g^{-1}_{g_{k}^{-1}(i)} g_{k}^{-1} (m)$. We prove  that  $g^{-1}_{g_{i}^{-1}(k)} g_{i}^{-1} (m)=g^{-1}_{g_{k}^{-1}(i)} g_{k}^{-1} (m)$ by showing that
    $g_{i}g_{g_{i}^{-1}(k)}= g_{k}g_{g_{k}^{-1}(i)}$ for all  $1 \leq i,k \leq n$.
 Since $S$ is braided, we have from claim \ref{debut_form}, that
 $g_{i}g_{g_{i}^{-1}(k)}=g_{g_{i}(g_{i}^{-1}(k))}g_{f_{g_{i}^{-1}(k)}(i)}
 =$ $ g_{k}g_{f_{g_{i}^{-1}(k)}(i)}$.
 But, from lemma \ref{formule}, $f_{g^{-1}_{i}(k)}(i)=g^{-1}_{k}(i)$, so
 $g_{i}g_{g_{i}^{-1}(k)}= g_{k}g_{g^{-1}_{k}(i)}$.
 The monoid  $M$ is then coherent at $X$ but since $M$ is atomic, $M$ is
coherent. So, $M$  satisfies the conditions $(C_{1})$ and $(C_{2})$.
\end{proof}
Now, using the fact that $M$ satisfies $(C_{1})$ and $(C_{2})$, we show that it satisfies also $(C_{3})$.
\begin{prop}
Assume $(X,S)$ is non-degenerate, involutive and braided.
Then there is a finite generating set that is closed under
$\setminus$, that is  $M$ satisfies the condition $(C_{3})$.
\end{prop}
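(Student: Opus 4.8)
The plan is to show that the finite set $P = X$ of atoms is already closed under $\setminus$, and hence serves as the required generating set. By Lemma~\ref{form_compl}, for distinct generators $x_i,x_j$ we have $x_i\setminus x_j = g_i^{-1}(j)\in X$, and by convention $x_i\setminus x_i=\epsilon$; in all cases $x_i\setminus x_j$ is a single letter (or the empty word), so it lies in $X\cup\{\epsilon\}$. The only subtlety is that the definition of $(C_3)$ asks for closure of a generating \emph{set} $P$ under $\setminus$, where $\setminus$ is the word-reversing complement; but since $M$ satisfies $(C_1)$ and $(C_2)$ (Proposition~\ref{M_c1&c2}), the right complement of two elements of $M$ is well defined and unique whenever it exists, and for two atoms it is represented by a word of length $\le 1$. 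Thus $X$ itself is a finite generating set closed under $\setminus$, which is exactly $(C_3)$.

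Concretely I would proceed as follows. First, invoke Proposition~\ref{complement_f_defined} to recall that $f$ is totally defined on $X\times X$ with range $X$, so that for \emph{every} ordered pair $(x_i,x_j)$ the complement $x_i\setminus x_j$ exists and is a single generator when $i\ne j$ and is $\epsilon$ when $i=j$. Second, observe that $X$ generates $M$ (the atoms generate an atomic monoid) and is finite since $X$ is finite. Third, conclude that $P=X$ witnesses $(C_3)$. Finally, combining this with $(C_0)$ (Proposition~\ref{complement_f_defined} gives atomicity, hence $1$ is the unique invertible element), $(C_1)$ and $(C_2)$ (Proposition~\ref{M_c1&c2}), and $(\tilde C_1)$ (cited from \cite[Lemma~4.1]{gateva+van}), Theorem~\ref{gars_critere} yields that $M$ is a Garside monoid, so $G$ is a Garside group, completing the proof of Theorem~\ref{theo:garside}.

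I do not expect a genuine obstacle here: the content has essentially been front-loaded into Lemma~\ref{form_compl} and Proposition~\ref{complement_f_defined}, which pin down the complement of generators as lying in $X$. The one point that deserves a sentence of care is that $(C_3)$ as stated in Definition~\ref{def_conditions} requires $P$ closed under the complement operation $x\setminus y$ for $x,y\in P$, and one must note that this complement, computed in $M$, coincides with the letter $f(x,y)$ from the presentation — this is precisely the ``one-to-one correspondence between the complement $f$ and the monoid $M$'' already invoked before Proposition~\ref{M_c1&c2}, so no further work is needed. Thus the proof is short: exhibit $X$ as the desired set and check finiteness and closure.
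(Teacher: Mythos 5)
Your proposal is correct and follows essentially the same route as the paper: exhibit $X\cup\{\epsilon\}$ as the finite generating set closed under $\setminus$, using the fact that each pair of distinct generators occurs in exactly one length-$2$ relation so that their right complement is a single letter. The paper is slightly more explicit on the one point you flag as needing care — it argues that the word $x_ia$ from the relation $x_ia=x_jb$ actually represents $x_i\vee x_j$ because it is a common right multiple of least length (given $(C_2)$), which is what identifies the monoid-theoretic complement with the letter from the presentation — but this is the same argument you are invoking via Lemma~\ref{form_compl} and Proposition~\ref{complement_f_defined}.
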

\begin{proof}
 From
claim \ref{cl_compl}, for any pair of generators $x_{i}, x_{j} $ there are unique $a,b \in X$ such that  $x_{i} a= x_{j} b$, that is   any pair of generators $x_{i}, x_{j} $ has
a right common multiple. Since from Proposition
\ref{M_c1&c2}, $M$ satisfies the condition $(C_{2})$,  we have
that $x_{i}$ and $x_{j}$ have a right lcm and the word $x_{i} a$ (or
$ x_{j} b$) represents the element $x_{i} \vee x_{j}$, since this is a common multiple of $x_{i}$ and
$x_{j}$ of least length. So,
 it holds that $x_{i}  \setminus x_{j}=a$ and $ x_{j}
\setminus x_{i} = b$, where $a,b \in X$. So,  $X \cup \{\epsilon\}$ is
closed under $\setminus$.
\end{proof}

\section{Additional properties of the structure group}
\subsection{The right lcm of the generators is a Garside element}
The braid groups and the Artin groups of finite type are Garside groups which satisfy the condition that the right lcm of their set of atoms is a Garside element. Dehornoy and Paris considered this additional condition as a part of the definition of Garside groups in \cite{deh_Paris} and in \cite{deh_francais} it was removed from the definition.
Indeed,  Dehornoy gives the example of a monoid that  is Garside and yet the right lcm of its atoms is not a Garside element \cite{deh_francais}. He shows  that the right lcm of the simple elements of a Garside monoid is a Garside element, where an element is  \emph{simple} if it belongs to the closure of a set of atoms under right complement and right lcm  \cite{deh_francais}.
We prove the following result:
\begin{thm}\label{thm_delta_lcm_atoms}
Let $G$ be the structure group of a non-degenerate, involutive and braided solution $(X,S)$ and let  $M$ be the monoid with the same presentation.
 Then the right lcm of the atoms (the elements of $X$) is a Garside element in $M$.
\end{thm}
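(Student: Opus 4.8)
The plan is to determine the set of simple elements of $M$ explicitly and then apply Dehornoy's result that the right lcm of the simple elements of a Garside monoid is a Garside element \cite{deh_francais}. For a subset $I\subseteq\{1,\dots,n\}$, write $\Delta_I$ for the right lcm $\bigvee_{i\in I}x_i$ of the corresponding atoms, so that $\Delta_{\{i\}}=x_i$ and $\Delta_{\emptyset}=\epsilon$. The goal is to prove that the family $\{\Delta_I : I\subseteq\{1,\dots,n\}\}$ is precisely the set of simple elements of $M$, and that its largest element $\Delta_{\{1,\dots,n\}}$ coincides with the right lcm of $X$; Dehornoy's theorem then gives the conclusion at once.

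First I would check that each $\Delta_I$ is well defined. By Theorem \ref{theo:garside} the monoid $M$ is Garside, hence it has a Garside element $E$; the left divisors of $E$ generate $M$, and since each $x_i$ is an atom it must itself be a left divisor of $E$. Thus $E$ is a common right multiple of all the $x_i$, and because $M$ satisfies $(C_2)$ one obtains by iteration the right lcm $\Delta_I$ of any subfamily of atoms, together with the facts that $\Delta_I$ is a left divisor of $E$ and $\Delta_I\vee\Delta_J=\Delta_{I\cup J}$.

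The heart of the proof is the identity $x_i\setminus\Delta_I=\Delta_{g_i^{-1}(I\setminus\{i\})}$ for every $i\in I$. This follows from Lemma \ref{form_compl}, which gives $x_i\setminus x_k=g_i^{-1}(k)$, combined with the distributivity of the right complement over the right lcm, namely $x_i\setminus(y\vee z)=(x_i\setminus y)\vee(x_i\setminus z)$ (a standard consequence of word reversing; alternatively one checks directly that $x_i\,\Delta_{g_i^{-1}(I\setminus\{i\})}$ represents a common right multiple of $\{x_k\}_{k\in I}$ of least length). Since $g_i$ is a bijection by non-degeneracy, $g_i^{-1}(I\setminus\{i\})$ has cardinality $|I|-1$; in particular $\|\Delta_I\|=|I|$ follows by induction, so $\|\Delta_{\{1,\dots,n\}}\|=n$. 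Using this identity I would then prove by induction on $|I|$ that $\Delta_I\setminus\Delta_J$ is again of the form $\Delta_K$: fixing some $i\in I$ and factoring $\Delta_I=x_i\Delta_{I'}$ and $\Delta_{I\cup J}=x_i\Delta_{J'}$ with $I'\subseteq J'$ and $|I'|=|I|-1$, left cancellation in $\Delta_I(\Delta_I\setminus\Delta_J)=\Delta_{I\cup J}$ reduces the claim to the smaller instance $\Delta_{I'}\setminus\Delta_{J'}$, the base case $I=\emptyset$ being trivial.

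Granting this, the family $\{\Delta_I\}$ contains the atoms and is closed under both $\setminus$ and $\vee$, so it contains every simple element of $M$; conversely $\Delta_{\{1,\dots,n\}}$ is an iterated right lcm of atoms, hence simple, and every $\Delta_I$ is a left divisor of it. Therefore the right lcm of all simple elements is $\Delta_{\{1,\dots,n\}}=\bigvee_{x\in X}x$, which is a Garside element of $M$ by Dehornoy's theorem. I expect the only genuinely delicate point to be the induction showing that $\{\Delta_I\}$ is closed under $\setminus$, that is, locating $\Delta_I\setminus\Delta_J$ among the $\Delta_K$; everything else reduces to bookkeeping with the complement formula of Lemma \ref{form_compl} and the already-established fact that $M$ is Garside.
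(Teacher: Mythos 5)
Your proof is correct and follows essentially the same route as the paper: both arguments identify the set of simple elements with the right lcms of subsets of atoms and then invoke Dehornoy's theorem that the right lcm of the simples is a Garside element. The only real difference is bookkeeping in the closure-under-complement step --- the paper proves $M\setminus X\subseteq X\cup\{\epsilon\}$ by a reversing-diagram induction and combines it with distributivity of $\setminus$ over $\vee$, whereas you track the index sets explicitly via $x_i\setminus\Delta_I=\Delta_{g_i^{-1}(I\setminus\{i\})}$ and a factor-and-cancel induction, incidentally recovering Theorem \ref{thm_Garside_length_n} along the way.
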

In order to prove that, we show that the set of simple elements $\chi$, that is  the closure of $X$ under right complement and right lcm, is equal to the closure of $X$ under right  lcm (denoted by $\overline{X}^{\vee}$), where the empty word $\epsilon$ is added. So, this implies that $\Delta$, the right lcm of the simple elements, is the right lcm of  the elements in $X$.
We use the word reversing method developed by Dehornoy and the diagrams for word reversing. We illustrate in  example $1$ below the definition of the diagram and we refer the reader to \cite{deh_francais} and  \cite{deh_livre} for more details. Here,  reversing the word $u^{-1}v$ using the diagram amounts to computing a right lcm for the elements represented by $u$ and $v$ (see  \cite[p.65]{deh_livre}).
\begin{ex} Let us consider the monoid $M$ defined in example \ref{example_struct_gp}. We illustrate the construction of the reversing diagram.\\
 (a) The reversing diagram of  the word $x_{3}^{-1}x_{1}$  is constructed in figure \ref{des1}  in the following way. First, we begin with the left diagram and then using the  defining relation  $x_{1}x_{2}=x_{3}x_{4}$ in $M$, we complete it in the right diagram.  We have $x_{1}\setminus x_{3}=x_{2}$ and $x_{3}\setminus x_{1}=x_{4}$.

\begin{figure}[h]
  \includegraphics[scale=0.95]{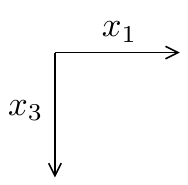}
  \includegraphics[scale=0.95]{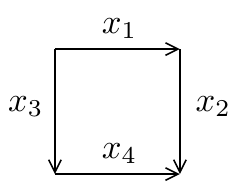}
  \caption{Reversing diagram of $x_{3}^{-1}x_{1}$}\label{des1}
\end{figure}

(b) The reversing diagram of  the word $x_{4}^{-2}x_{1}^{2}$ is described in figure \ref{des2}: we begin with the left diagram and then we complete it using the defining relations in the right diagram.
\begin{figure}[h]
 \includegraphics[scale=0.95]{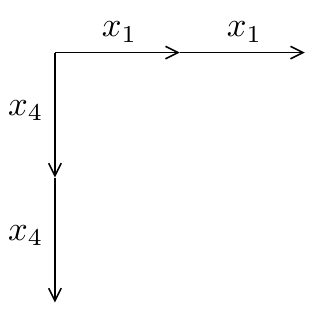}
  \includegraphics[scale=0.95]{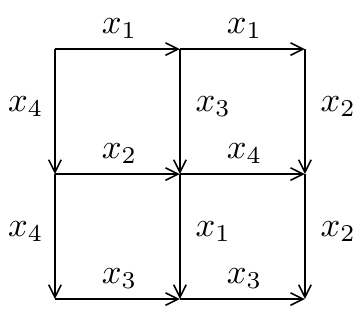}
\caption{Reversing diagram of $x_{4}^{-2}x_{1}^{2}$}\label{des2}
\end{figure}
     So, we have $x_{1}^{2}x_{2}^{2}=x_{4}^{2}x_{3}^{2}$ in $M$  and since  $x_{1}^{2}=x_{2}^{2}$ and $x_{3}^{2}=x_{4}^{2}$, it holds that  $x_{1}^{4}=x_{2}^{4}=x_{3}^{4}=x_{4}^{4}$ in $M$.
So, a word representing the right lcm of $x_{1}^{2}$ and $x_{4}^{2}$ is the word $x_{1}^{4}$ or the word $x_{4}^{4}$. We obtain from the diagram that the word $x_{2}^{2}$ represents the element $x_{1}^{2}\setminus x_{4}^{2}$ and the word $x_{3}^{2}$ represents the element  $x_{4}^{2}\setminus x_{1}^{2}$.
\end{ex}
In order to prove that  $\chi=\overline{X}^{\vee}\cup \{\epsilon\}$, we show that every complement of simple elements is the right lcm of some generators.
The following technical lemma is the basis of induction for the proof of Theorem \ref{thm_delta_lcm_atoms}.
\begin{lem}\label{lem_MXin X}
$(i)$ It holds that  $M\setminus X \subseteq X \cup \{\epsilon\}$.\\
$(ii)$ It holds that  $M\setminus (\vee_{j=1}^{j=k} x_{i_{j}}) \subseteq \overline{X}^{\vee}\cup \{\epsilon\}$, where $x_{i_{j}} \in X$, $ 1\leq j \leq k$.
\end{lem}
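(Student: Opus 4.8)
The plan is to prove both parts by analyzing what the word reversing process does to words of the form $x_i^{-1}w$ where $w$ represents an element of $X$ (part $(i)$) or a right lcm of generators (part $(ii)$). For part $(i)$: let $x_i \in X$ and let $y \in X$ be another generator; we must show $x_i \setminus y$ is either a single generator or $\epsilon$. If $x_i = y$ this is $\epsilon$ by definition of the complement. If $x_i \neq y$, then by Claim~\ref{cl_compl} there is exactly one defining relation $x_i a = x_j b$ with $a,b \in X$ (reading $x_j = y$), and since $M$ satisfies $(C_1)$ and $(C_2)$ by Proposition~\ref{M_c1&c2}, the word $x_i a$ of length $2$ represents $x_i \vee y$; hence $x_i \setminus y = a \in X$. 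Concretely one can also invoke Lemma~\ref{form_compl}: $x_i \setminus y = g_i^{-1}(j) \in X$. So $M \setminus X = \{\,x_i \setminus x_j : x_i, x_j \in X\,\} \subseteq X \cup \{\epsilon\}$, which is $(i)$.

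For part $(ii)$, I would induct on $k$, the number of generators being joined. The base case $k=1$ is exactly part $(i)$. For the inductive step, write $N = \vee_{j=1}^{k} x_{i_j}$ and suppose the claim holds for all joins of fewer than $k$ generators; we want $x_i \setminus N \in \overline{X}^\vee \cup \{\epsilon\}$ for every $x_i \in X$. The key identity is the standard recursion for the complement over a right lcm, proved via word reversing: if $N = (\vee_{j=1}^{k-1} x_{i_j}) \vee x_{i_k}$, then one has
$x_i \setminus N$ expressed in terms of $x_i \setminus (\vee_{j=1}^{k-1} x_{i_j})$, the complement $x_i \setminus x_{i_k}$, and complements of these against each other. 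Spelling this out: reversing the word $x_i^{-1}(x_{i_1}\cdots)$ column by column in the reversing diagram, after processing the first $k-1$ generators we obtain $x_i \setminus (\vee_{j<k} x_{i_j})$ (a right lcm of generators, by induction) on the bottom edge; appending the column for $x_{i_k}$ and completing the diagram shows
$$x_i \setminus N \;\equiv^+\; \bigl(x_i \setminus \textstyle\bigvee_{j<k} x_{i_j}\bigr)\,\bigvee\,\bigl(x_i \setminus x_{i_k}\bigr),$$
i.e. $x_i \setminus N$ is itself the right lcm of $x_i \setminus (\vee_{j<k}x_{i_j})$ — which by induction lies in $\overline{X}^\vee$ — and $x_i \setminus x_{i_k} \in X \cup \{\epsilon\}$ by $(i)$. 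A right lcm of an element of $\overline{X}^\vee$ with an element of $X\cup\{\epsilon\}$ is again in $\overline{X}^\vee \cup \{\epsilon\}$: writing the first as $\bigvee_\ell x_{p_\ell}$, iterating the same recursion expresses the total join as $\bigvee_\ell x_{p_\ell} \vee (x_i\setminus x_{i_k})$, a join of generators. Hence $x_i \setminus N \in \overline{X}^\vee \cup \{\epsilon\}$, completing the induction.

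The main obstacle is justifying the reversing recursion $x_i \setminus (u \vee x) \equiv^+ (x_i \setminus u) \vee (x_i \setminus x)$ rigorously — i.e. making precise that completing the reversing diagram of $x_i^{-1}(u x)$ by first handling $u$ and then $x$ produces exactly this join, using that reversing $p^{-1}q$ computes a right lcm of $p$ and $q$ (see~\cite[p.65]{deh_livre}) together with $(C_1),(C_2)$ guaranteeing existence and uniqueness. One must also confirm that all the intermediate complements and lcms invoked actually exist: this is where $(C_2)$ is essential, since any two generators have a common right multiple by Claim~\ref{cl_compl}, and one propagates this upward — at each stage the relevant elements share a common right multiple (a suitable power or product of generators dividing $\Delta$), so the right lcm exists and the diagram closes. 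Once Lemma~\ref{lem_MXin X} is in hand, it feeds the induction establishing $\chi = \overline{X}^\vee \cup \{\epsilon\}$ and hence Theorem~\ref{thm_delta_lcm_atoms}.
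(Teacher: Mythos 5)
Your argument proves a weaker statement than the lemma asserts. The notation $M\setminus X$ means $\{\,b\setminus a : b\in M,\ a\in X\,\}$, with the \emph{left} operand ranging over the whole monoid (this is the convention made explicit in Section 5, and it is the strength actually used downstream: Proposition \ref{prop_Simples} needs complements such as $(x_i\vee x_j)\setminus(x_l\vee x_m)$, and Section 5 works with $\Delta_x=\vee(M\setminus x)$). You identify $M\setminus X$ with $\{x_i\setminus x_j : x_i,x_j\in X\}$ and accordingly establish only $X\setminus X\subseteq X\cup\{\epsilon\}$; likewise in part $(ii)$ you only treat $x_i\setminus N$ for $x_i\in X$. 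What you have is the correct base case, but the induction over the left operand is missing: for $w=h_1\cdots h_k\in M$ and $x\in X$ one uses $(uv)\setminus x=v\setminus(u\setminus x)$ (the column-by-column structure of the reversing diagram, cf.\ Remark \ref{rem_interpret_compl_g}) to see that $w\setminus x=h_k\setminus(\cdots(h_1\setminus x))$ remains in $X\cup\{\epsilon\}$ at every stage. This short induction on the length of $w$ is exactly what the paper means by ``this implies inductively $(i)$ (see the reversing diagram)''.

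Apart from this, your treatment of part $(ii)$ is essentially the paper's: both rest on the distributivity $u\setminus(v\vee w)=(u\setminus v)\vee(u\setminus w)$ from \cite[Lemma 1.7]{deh_francais}. The paper applies it in one stroke to get $u\setminus(\vee_{j=1}^{k} x_{i_j})=\vee_{j=1}^{k}(u\setminus x_{i_j})$ and then quotes $(i)$, whereas you peel off one generator at a time; once $(i)$ is repaired so that it holds for arbitrary $u\in M$, your induction on $k$ goes through verbatim with $x_i$ replaced by $u$. Your concern about the existence of the intermediate lcms is moot at this point of the paper, since $M$ has already been shown to be a Garside monoid, so all the right lcms invoked exist.
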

\begin{proof}
It holds that $S(X \times X) \subseteq X \times X$, so $X\setminus X \subseteq X\cup \{\epsilon\}$ and this implies inductively \emph{(i)} $M \setminus X \subseteq X \cup \{\epsilon\}$ (see the reversing diagram).\\
Let $u \in M$, then using the following rule of calculation on complements from \cite[Lemma 1.7]{deh_francais}: for every $u,v,w \in M$,  $u \setminus (v \vee w) = (u \setminus v) \vee (u \setminus w)$, we have inductively that $u \setminus (\vee_{j=1}^{j=k} x_{i_{j}}) = \vee_{j=1}^{j=k} (u \setminus x_{i_{j}})$.
From \emph{$(i)$}, $u \setminus x_{i_{j}}$ belongs to $X$, so
$\vee_{j=1}^{j=k} (u \setminus x_{i_{j}})$ is in $\overline{X}^{\vee}\cup \{\epsilon\}$. That is, $(ii)$ holds.
\end{proof}
Since the monoid $M$ is Garside, the set of simples  $\chi$  is finite and its  construction is  done in a finite number of steps in the following way:\\At the $0-$th step,  $\chi_{0}=X$.\\
At the first step,  $\chi_{1}=X \cup \{x_{i} \vee x_{j};$ $x_{i},x_{j} \in X \}\cup
\{x_{i} \setminus x_{j};$  $x_{i},x_{j} \in X \}$.\\
At the second step,  $\chi_{2}=\chi_{1} \cup \{u \vee v;$ $u,v \in \chi_{1} \} \cup \{u \setminus v;$ $u,v \in \chi_{1} \}$.\\
 We go on inductively and after a finite number of steps $k$, $\chi_{k}=\chi$.
\begin{prop}\label{prop_Simples}
It holds that $\chi=\overline{X}^{\vee}\cup \{\epsilon\}$.
\end{prop}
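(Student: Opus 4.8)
The plan is to prove the two inclusions $\chi \subseteq \overline{X}^{\vee}\cup\{\epsilon\}$ and $\overline{X}^{\vee}\cup\{\epsilon\} \subseteq \chi$ separately, the second one being almost immediate.

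For the easy inclusion, recall that $\chi$ is closed under right lcm and contains $X$ by construction, hence it contains every finite right lcm of elements of $X$, that is $\overline{X}^{\vee}\subseteq\chi$; and $\epsilon\in\chi$ since $\epsilon = x_i\setminus x_i$ is a complement of generators. So $\overline{X}^{\vee}\cup\{\epsilon\}\subseteq\chi$.

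For the reverse inclusion I would argue by induction on the steps $\chi_0\subseteq\chi_1\subseteq\cdots\subseteq\chi_k=\chi$ described just above the statement, proving that $\chi_\ell \subseteq \overline{X}^{\vee}\cup\{\epsilon\}$ for every $\ell$. The base case $\chi_0 = X \subseteq \overline{X}^{\vee}$ is trivial. For the inductive step, assume $\chi_\ell\subseteq\overline{X}^{\vee}\cup\{\epsilon\}$ and take $u,v\in\chi_\ell$, so each of $u,v$ is either $\epsilon$ or a right lcm $\vee_{j} x_{i_j}$ of generators. I must check that both $u\vee v$ and $u\setminus v$ lie in $\overline{X}^{\vee}\cup\{\epsilon\}$. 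The case of $u\vee v$ is clear: a right lcm of two right lcms of generators is again a right lcm of generators (using associativity/commutativity of $\vee$, valid since $M$ satisfies $(C_1)$, $(C_2)$), so $u\vee v\in\overline{X}^{\vee}\cup\{\epsilon\}$. For $u\setminus v$, if $v=\epsilon$ then $u\setminus v=\epsilon$; otherwise write $v = \vee_{j=1}^{k} x_{i_j}$ and apply the distributivity rule $u\setminus(v'\vee w')=(u\setminus v')\vee(u\setminus w')$ from \cite[Lemma 1.7]{deh_francais} to get $u\setminus v = \vee_{j=1}^{k}(u\setminus x_{i_j})$. Now if $u=\epsilon$ this equals $v\in\overline{X}^{\vee}$; and if $u\in M\setminus\{\epsilon\}$ — in particular $u\in M$ — then Lemma \ref{lem_MXin X}$(i)$ gives $u\setminus x_{i_j}\in X\cup\{\epsilon\}$ for each $j$, so the right lcm $\vee_{j}(u\setminus x_{i_j})$ is a right lcm of generators, hence in $\overline{X}^{\vee}\cup\{\epsilon\}$. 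This closes the induction, giving $\chi\subseteq\overline{X}^{\vee}\cup\{\epsilon\}$.

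The one point that needs a little care — and which I expect to be the main (mild) obstacle — is the bookkeeping around the empty word and the degenerate cases: making sure that when some $u\setminus x_{i_j}=\epsilon$ the expression $\vee_j(u\setminus x_{i_j})$ is still interpreted correctly as a right lcm of the remaining generators, and that all the lcm's and complements invoked actually exist. Existence is not an issue here because $M$ is already known to be Garside (so any finite subset of $M$ has a right lcm), and Lemma \ref{lem_MXin X} was precisely designed to handle the reduction of $M\setminus X$ into $X\cup\{\epsilon\}$; so once those are invoked the argument is routine. Having established $\chi=\overline{X}^{\vee}\cup\{\epsilon\}$, Theorem \ref{thm_delta_lcm_atoms} follows since $\Delta$, being the right lcm of all simple elements $\chi$, equals the right lcm of $\overline{X}^{\vee}$, which is the right lcm of $X$.
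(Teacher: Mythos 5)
Your proof is correct and follows essentially the same route as the paper: an induction on the construction steps $\chi_0\subseteq\chi_1\subseteq\cdots\subseteq\chi_k=\chi$, reducing each complement $u\setminus v$ to a right lcm of elements of $X\cup\{\epsilon\}$ via the distributivity rule $u\setminus(v\vee w)=(u\setminus v)\vee(u\setminus w)$ and Lemma \ref{lem_MXin X}. The only differences are cosmetic: you re-derive part $(ii)$ of Lemma \ref{lem_MXin X} inline rather than citing it, and you spell out the trivial reverse inclusion and the closure under $\vee$, which the paper leaves implicit.
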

\begin{proof}
The proof is by induction on the number of steps $k$ in the construction of $\chi$. We show that each complement of simple elements is the right lcm of some generators. At the first step, we have that $\{x_{i} \setminus x_{j};$ for all  $x_{i},x_{j} \in X \}= X\cup \{\epsilon\}$. At the following steps, we do not consider the complements of the form $...\setminus x_{i}$ since these belong to $X$ (see lemma \ref{lem_MXin X}). At the second step, the complements have the following form
$x_{i} \setminus (x_{l} \vee x_{m})$ or $(x_{i} \vee x_{j}) \setminus (x_{l} \vee x_{m})$ and these belong to  $\overline{X}^{\vee}\cup \{\epsilon\}$   from lemma \ref{lem_MXin X}. Assume that at the $k-$th step, all the complements obtained belong to
$\overline{X}^{\vee}\cup \{\epsilon\}$, that is all the elements of $\chi_{k}$ are  right lcm of generators. At the $(k+1)-$th step, the complements have the following form $u \setminus v $, where $u,v \in \chi_{k}$. From the induction assumption,  $v$ is  a right lcm of generators, so  from lemma \ref{lem_MXin X},  $u \setminus v $  belongs to  $\overline{X}^{\vee}\cup \{\epsilon\}$.
\end{proof}

\begin{proof}[Proof of Theorem \ref{thm_delta_lcm_atoms}]
The right lcm of the set of simples $\chi$ is a Garside element and since from Proposition \ref{prop_Simples},  $\chi=\overline{X}^{\vee}\cup \{\epsilon\}$, we have that the right lcm of $X$ is a Garside element.
\end{proof}
We show now that the length of a Garside element $\Delta$ is equal to $n$, the cardinality of the set $X$. In order to show that, we prove in the following  that the right lcm of $k$ different generators has length $k$.
\begin{rem}\label{rem_interpret_compl_g}
When $w\setminus x$ is not equal to the empty word, then we can interpret $w\setminus x$ in terms of the functions $g^{-1}_{*}$ using the reversing diagram corresponding to  the words $w=h_{1}h_{2}..h_{k}$ and $x$, where $h_{i},x \in X$ and for brevity of notation we write $g^{-1}_{i}(x)$ for $g^{-1}_{h_{i}}(x)$:\\
\includegraphics{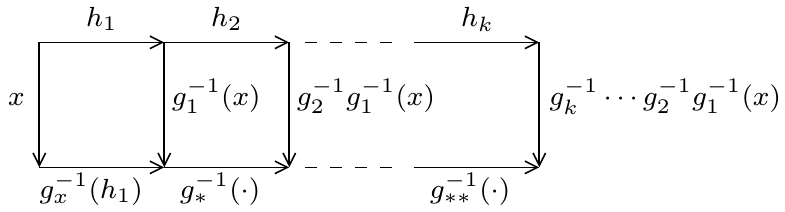}

That is, $h_{1}h_{2}..h_{k}\setminus x = g^{-1}_{k}..g^{-1}_{2}g^{-1}_{1}(x)$ and this is equal to $g^{-1}_{w}(x)$, since the action on $X$ is a right action.
Having a glance at the reversing diagram, we remark  that if  $w\setminus x$ is not equal to the empty word, then none of the complements $h_{1}\setminus x$, $h_{1}h_{2}\setminus x$,..,  $h_{1}h_{2}..h_{k-1}\setminus x$ can be equal to the empty word.
\end{rem}

\begin{lem}\label{lem_compl_egal}
Let $h_{i}, x$ be all different elements in $X$, for $1 \leq i \leq k$.
Then $(\vee_{i=1}^{i=k}h_{i}) \setminus x$ is not equal to the empty word $\epsilon$.
\end{lem}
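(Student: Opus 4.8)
The plan is to argue by induction on $k$, the point being to peel off one generator at a time so that the complement of a right lcm of $k$ generators is rewritten as a complement of two generators, which is then completely controlled by Lemma~\ref{form_compl}. The base case $k=1$ is immediate: since $h_{1}\neq x$, Lemma~\ref{form_compl} gives $h_{1}\setminus x = g^{-1}_{h_{1}}(x)$, which is an element of $X$ and hence different from $\epsilon$.

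For the inductive step, I would set $W=\bigvee_{i=1}^{i=k-1}h_{i}$, so that $\bigvee_{i=1}^{i=k}h_{i}=W\vee h_{k}=W\,(W\setminus h_{k})$, and then apply the standard reversing identity $(uv)\setminus w = v\setminus(u\setminus w)$ (a companion of \cite[Lemma~1.7]{deh_francais}; see also \cite{deh_livre}), which is legitimate here because all the right lcms involved exist, $M$ being Garside. This yields
$$\left(\bigvee_{i=1}^{i=k}h_{i}\right)\setminus x \;=\; \bigl(W\,(W\setminus h_{k})\bigr)\setminus x \;=\; (W\setminus h_{k})\setminus(W\setminus x).$$
By Lemma~\ref{lem_MXin X}$(i)$ both $W\setminus h_{k}$ and $W\setminus x$ lie in $X\cup\{\epsilon\}$. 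Applying the induction hypothesis to the $k-1$ pairwise distinct generators $h_{1},\dots,h_{k-1}$ together with the extra element $h_{k}$ (which is distinct from all of them), and then again together with $x$, shows that neither of these complements is $\epsilon$. Hence $a:=W\setminus h_{k}$ and $b:=W\setminus x$ are single generators, and the displayed expression is $a\setminus b$, which by Lemma~\ref{form_compl} equals $\epsilon$ if and only if $a=b$.

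It therefore remains to check $a\neq b$, and here I would invoke Remark~\ref{rem_interpret_compl_g}. Fixing a positive word representing $W$, the associated map $g^{-1}_{W}$ is a composition of the bijections $g^{-1}_{(\cdot)}$ of $X$, hence itself a bijection of $X$; and since $W\setminus h_{k}$ and $W\setminus x$ are both nonempty, the remark gives $a=g^{-1}_{W}(h_{k})$ and $b=g^{-1}_{W}(x)$. As $h_{k}\neq x$, injectivity of $g^{-1}_{W}$ forces $a\neq b$, so $a\setminus b=g^{-1}_{a}(b)\in X$ is nonempty, completing the induction.

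I expect the only delicate points to be bookkeeping ones: first, justifying the reversing identity $(uv)\setminus w=v\setminus(u\setminus w)$ in the present setting (no obstacle, since every relevant right lcm exists in the Garside monoid $M$); and second, making sure the functional reading of the complement from Remark~\ref{rem_interpret_compl_g} is valid, i.e.\ that no intermediate complement along the reversing of $W^{-1}h_{k}$ or of $W^{-1}x$ becomes empty — but this is exactly what the last sentence of that remark guarantees once we know $W\setminus h_{k}$ and $W\setminus x$ are nonempty, which the induction hypothesis supplies.
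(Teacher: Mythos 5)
Your proposal is correct and follows essentially the same route as the paper: induction on $k$, reduction of $(\vee_{i=1}^{k}h_{i})\setminus x$ to $(W\setminus h_{k})\setminus(W\setminus x)$ (your identity $(uv)\setminus w=v\setminus(u\setminus w)$ applied to $W\vee h_{k}=W(W\setminus h_{k})$ is exactly the paper's $(u\vee v)\setminus w=(u\setminus v)\setminus(u\setminus w)$), followed by the functional reading $g^{-1}_{W}$ from Remark~\ref{rem_interpret_compl_g} and its bijectivity to see that the two resulting generators are distinct. The only cosmetic difference is that you argue directly while the paper argues by contradiction.
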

\begin{proof}
By induction on $k$. If $k=1$, then   $h_{1}\setminus x \neq \epsilon$, as $h_{1} \neq x$.\\
Now assume $(\vee_{i=1}^{i=k-1}h_{i}) \setminus x \neq \epsilon$ and assume by contradiction that
$(\vee_{i=1}^{i=k}h_{i}) \setminus x= \epsilon$. Using the following rule of computation on the complement from \cite[Lemma 1.7]{deh_francais}: $(u \vee v)\setminus w=(u\setminus v)\setminus(u \setminus w)$, we have $(\vee_{i=1}^{i=k}h_{i}) \setminus x= (\vee_{i=1}^{i=k-1}h_{i} \vee h_{k}) \setminus x=
((\vee_{i=1}^{i=k-1}h_{i}) \setminus h_{k})\setminus ((\vee_{i=1}^{i=k-1}h_{i}) \setminus x)$.
From the induction assumption,  $(\vee_{i=1}^{i=k-1}h_{i}) \setminus x \neq \epsilon$ and $(\vee_{i=1}^{i=k-1}h_{i}) \setminus h_{k} \neq \epsilon$, so $(\vee_{i=1}^{i=k}h_{i}) \setminus x= \epsilon$ implies that
$(\vee_{i=1}^{i=k-1}h_{i}) \setminus h_{k} = (\vee_{i=1}^{i=k-1}h_{i}) \setminus x$.
Assume $w$ represents the element $\vee_{i=1}^{i=k-1}h_{i}$, then from remark \ref{rem_interpret_compl_g}, $w \setminus h_{k}$ can be interpreted as $g^{-1}_{w}(h_{k})$ and  $w \setminus x$  can be interpreted as $g^{-1}_{w}(x)$. The function  $g^{-1}_{w}$ is a bijective function as it is the composition of bijective functions, so $g^{-1}_{w}(h_{k})=g^{-1}_{w}(x)$ implies that $h_{k}=x$,  but this is a contradiction.
So, $(\vee_{i=1}^{i=k}h_{i}) \setminus x \neq  \epsilon$.
\end{proof}
\begin{thm}\label{thm_Garside_length_n}
Let $G$ be the structure group of a non-degenerate, braided and involutive solution $(X,S)$, where $X=\{x_{1},..,x_{n}\}$ and let  $M$ be the monoid with the same presentation.
Let $\Delta$ be a Garside element in $M$.
Then the length of $\Delta$ is  $n$.
\end{thm}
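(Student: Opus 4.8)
Plan of proof. By Theorem~\ref{thm_delta_lcm_atoms} the right lcm $\Delta=\vee_{i=1}^{n}x_{i}$ of the atoms is a Garside element of $M$, so it suffices to compute $\parallel \Delta\parallel$; more precisely, I would prove the sharper statement that for any $k$ pairwise distinct generators $x_{i_{1}},\dots,x_{i_{k}}$ one has $\parallel\vee_{j=1}^{k}x_{i_{j}}\parallel=k$, and then specialize to $k=n$. Recall first that, since every defining relation of $M$ is length-preserving, all words representing a given element of $M$ have the same length; hence the norm is well defined on words and additive, $\parallel uv\parallel=\parallel u\parallel+\parallel v\parallel$. Since $M$ is Garside, hence Gaussian, any two elements of $M$ admit a right lcm, so all the iterated lcms below exist.

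The argument goes by induction on $k$. For $k=1$ the claim is trivial. For the inductive step, set $u:=\vee_{j=1}^{k-1}x_{i_{j}}$ and write $\vee_{j=1}^{k}x_{i_{j}}=u\vee x_{i_{k}}=u\cdot(u\setminus x_{i_{k}})$ by associativity of the lcm and the definition of the right complement. By the induction hypothesis $\parallel u\parallel=k-1$, so by additivity of the norm it remains only to show that the complement $c:=u\setminus x_{i_{k}}$ has norm $1$, i.e.\ that it is a single generator.

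This last point is the heart of the proof, and the step I expect to require the most care. First, $c\neq\epsilon$ by Lemma~\ref{lem_compl_egal}, because $x_{i_{1}},\dots,x_{i_{k-1}},x_{i_{k}}$ are all distinct. Now choose a word $w=h_{1}h_{2}\cdots h_{k-1}$ with $h_{\ell}\in X$ representing $u$; by the induction hypothesis and the length-preserving property, $w$ has length exactly $k-1$. Word reversing computes $c$ from $w$ and $x_{i_{k}}$ by processing the letters $h_{1},\dots,h_{k-1}$ one at a time, and since $c=w\setminus x_{i_{k}}\neq\epsilon$, Remark~\ref{rem_interpret_compl_g} ensures that none of the intermediate complements $h_{1}\setminus x_{i_{k}},\ h_{1}h_{2}\setminus x_{i_{k}},\dots,h_{1}\cdots h_{k-2}\setminus x_{i_{k}}$ is empty. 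By Lemma~\ref{lem_MXin X}$(i)$ a complement of one generator against one generator lies in $X\cup\{\epsilon\}$, so each of these intermediate complements, being nonempty, is a single generator; hence so is the final one, $c=g^{-1}_{h_{k-1}}\cdots g^{-1}_{h_{1}}(x_{i_{k}})\in X$. Thus $\parallel c\parallel=1$, which closes the induction; taking $k=n$ and $\{x_{i_{1}},\dots,x_{i_{n}}\}=X$ gives $\parallel\Delta\parallel=n$.

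Two subtleties I would be explicit about. (a) The passage from the element-level complement $u\setminus x_{i_{k}}$ to its computation by word reversing on a chosen representative $w$ of $u$ is legitimate because the reversing process is confluent and the right complement is unique (conditions $(C_{0})$ and $(C_{1})$, which $M$ satisfies by Proposition~\ref{M_c1&c2}). (b) The induction hypothesis is used twice — to evaluate $\parallel u\parallel$ and to guarantee that a representative word $w$ of $u$ has length exactly $k-1$, which is what lets us exhibit $c$ as a composition of exactly $k-1$ bijections $g^{-1}_{*}$ applied to a single generator; without the length control one could not read off from the reversing diagram that the output is a single letter.
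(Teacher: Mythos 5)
Your proposal is correct and follows essentially the same route as the paper: induction on $k$, writing $\vee_{j=1}^{k}x_{i_{j}}=u\cdot(u\setminus x_{i_{k}})$, using Lemma~\ref{lem_compl_egal} to see that the complement is nonempty and Lemma~\ref{lem_MXin X} to see that it is then a single generator. The only differences are cosmetic (you start the induction at $k=1$ instead of $k=2$, and you spell out the word-reversing and uniqueness points that the paper leaves implicit).
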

\begin{proof}
From theorem \ref{thm_delta_lcm_atoms}, $\Delta$ represents the right lcm of the elements in $X$, that is
$\Delta= x_{1} \vee x_{2} \vee ...\vee x_{n}$ in $M$.
We show by induction that a word representing the right lcm  $x_{i_{1}}\vee x_{i_{2}} \vee..\vee x_{i_{k}}$ has length  $k$, where $x_{i_{j}} \neq x_{i_{l}}$ for $j \neq l$. If $k=2$, then there are different generators $a,b$ such that $S(x_{i_{1}},a)=(x_{i_{2}},b)$, so $x_{i_{1}}a=x_{i_{2}}b$
is a relation in $M$ and the right lcm of $x_{i_{1}},x_{i_{2}}$ has length $2$.
Assume that the  right lcm  $x_{i_{1}}\vee x_{i_{2}} \vee..\vee x_{i_{k-1}}$ has length  $k-1$. Then the right lcm  $x_{i_{1}}\vee x_{i_{2}} \vee..\vee x_{i_{k-1}} \vee x_{i_{k}}$ is obtained from the reversing diagram corresponding to the  words $x_{i_{1}}\vee x_{i_{2}} \vee..\vee x_{i_{k-1}}$ and $x_{i_{k}}$.
From lemma \ref{lem_compl_egal}, $(x_{i_{1}}\vee x_{i_{2}} \vee..\vee x_{i_{k-1}})\setminus x_{i_{k}}$ is not equal to the empty word, so from lemma \ref{lem_MXin X} it has length $1$.
So, the right lcm  $x_{i_{1}}\vee x_{i_{2}} \vee..\vee x_{i_{k}}$ has length  $k$ and this implies that  $x_{1}\vee x_{2} \vee..\vee x_{n}$ has length  $n$.
\end{proof}

\subsection{Homological dimension}
 Dehornoy and Laffont construct a  resolution of $\Bbb Z$ (as trivial ${\Bbb Z}M$-module) by free
 ${\Bbb Z}M$-modules, when $M$ satisfies some conditions \cite{deh_homologie}. Moreover, they show that every Garside group is of type $FL$, that is with a finite resolution \cite[Prop. 2.9-2.10]{deh_homologie}.  Charney, Meier, and Whittlesey  show in \cite{charney} that Garside groups have finite homological dimension, using another approach. In \cite{deh_homologie}, Dehornoy and Laffont  show that whenever $M$ is a Garside monoid then the resolution defined in \cite{charney}  is isomorphic to the resolution they define.
We use the following result from \cite{deh_homologie} in
order to show that  the homological dimension of the structure
group corresponding to a set-theoretical solution $(X,S)$ of the
quantum Yang-Baxter equation is equal to the number of
generators in $X$.

\begin{prop}\cite[Cor.3.6]{deh_homologie}\label{limiter_dimension}
Assume that $M$ is a  locally Gaussian monoid admitting a
generating set $\chi$ such that $\chi \cup \{\epsilon\}$ is closed under left and right complement and
lcm and such that the norm of every element in $\chi$ is bounded
above by $n$. Then the (co)homological dimension of $M$ is at most $n$.
\end{prop}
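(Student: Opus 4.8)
The plan is to exhibit an explicit free resolution of the trivial module $\mathbb{Z}$ over $\mathbb{Z}M$ of length at most $n$, following the construction of Dehornoy and Laffont; since the (co)homological dimension of $M$ is bounded above by the length of any free $\mathbb{Z}M$-resolution of $\mathbb{Z}$, the bound will then follow at once. First I would build the chain complex $(C_\bullet,\partial_\bullet)$ whose module $C_k$ in degree $k$ is free over $\mathbb{Z}M$ with basis the set of \emph{$k$-cells}, that is, the strictly increasing flags $\epsilon\prec c_1\prec\cdots\prec c_k$ in the poset $(\chi\cup\{\epsilon\},\preceq)$, where $\preceq$ denotes left-divisibility; equivalently one may index $C_k$ by the $k$-tuples of nontrivial elements of $\chi$ obtained by taking successive right complements along such a flag. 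The augmentation $C_0=\mathbb{Z}M\to\mathbb{Z}$ is the usual one.

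The differential $\partial_k\colon C_k\to C_{k-1}$ is an alternating sum of $k+1$ face operators: the interior faces delete a vertex of the flag (equivalently, collapse two consecutive entries of the tuple by passing to their join inside $\chi$), while the two extreme faces carry the $\mathbb{Z}M$-coefficient produced by the removed bottom factor $c_1$ and forget the top factor $c_k$. \emph{Here is where the hypothesis enters}: the assumption that $\chi\cup\{\epsilon\}$ is closed under right complement and right lcm is exactly what guarantees that every face of a cell is again a cell, so that $\partial_k$ is well defined with entries in $\mathbb{Z}M$; the two-sided closure (left as well as right) is what lets all the relevant complements be computed by word reversing, so that the single left-module resolution controls both homology and cohomology. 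A direct computation using the complement identities $u\setminus(v\vee w)=(u\setminus v)\vee(u\setminus w)$ and $(u\vee v)\setminus w=(u\setminus v)\setminus(u\setminus w)$ from \cite[Lemma 1.7]{deh_francais} (already invoked in Lemma~\ref{lem_MXin X} and Lemma~\ref{lem_compl_egal}) then yields $\partial_{k-1}\partial_k=0$.

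The heart of the argument is to prove that $(C_\bullet,\partial_\bullet)$ is acyclic, i.e.\ a genuine resolution of $\mathbb{Z}$. I would do this by producing a contracting homotopy $h$ built from the \emph{greedy normal form}: since $M$ is locally Gaussian and $\chi\cup\{\epsilon\}$ is closed under complement and lcm, every element of $M$ has a unique left-greedy decomposition $m=s_1s_2\cdots s_r$ with each $s_i\in\chi$ (each $s_i$ the maximal left-divisor in $\chi$ of $s_is_{i+1}\cdots s_r$), computable by reversing. The homotopy prepends the leading factor $s_1$ to a cell, and the verification of $\partial h+h\partial=\mathrm{id}$ off degree $0$ reduces to the interaction between this greedy head and the complement-based faces. \textbf{This homotopy identity is the main obstacle}: it is the technical core, and checking it amounts to the combinatorial fact that prepending the greedy head and then collapsing it recovers the identity, which again rests on uniqueness of the greedy normal form.

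Finally I would bound the length of the resolution using the norm hypothesis. In a $k$-cell $\epsilon\prec c_1\prec\cdots\prec c_k$ each strict divisibility step increases the norm by at least one, since the norm is monotone under left-divisibility and $\|c_i\|<\|c_{i+1}\|$ whenever the step $c_i\prec c_{i+1}$ is proper; hence $0<\|c_1\|<\cdots<\|c_k\|$, which forces $k\le\|c_k\|\le n$ because every element of $\chi$ has norm at most $n$. Consequently $C_k=0$ for all $k>n$, the resolution has length at most $n$, and therefore the (co)homological dimension of $M$ is at most $n$.
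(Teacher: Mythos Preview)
The paper does not prove this proposition at all: it is quoted verbatim as \cite[Cor.~3.6]{deh_homologie} and used as a black box in the next theorem. So there is no ``paper's own proof'' to compare against; your write-up is a reconstruction of the Dehornoy--Lafont argument itself.

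As such a reconstruction, your outline is broadly faithful. A couple of remarks on accuracy: the cells in \cite{deh_homologie} are not literally flags in $(\chi\cup\{\epsilon\},\preceq)$ but rather $k$-tuples $[\alpha_1,\ldots,\alpha_k]$ of generators from a linearly ordered set, subject to a ``main generator'' condition; the flag picture you describe is closer to the Charney--Meier--Whittlesey order-complex model \cite{charney}, which the paper notes is isomorphic to the Dehornoy--Lafont resolution in the Garside case. Either description gives the same length bound, and your final paragraph---the strict norm increase along a flag forcing $k\le n$---is exactly the mechanism. The contracting homotopy via the greedy normal form is also the right idea, though in \cite{deh_homologie} it is packaged slightly differently (a recursive definition of the homotopy rather than a direct ``prepend the head'' formula). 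None of this affects the validity of your sketch; it is simply that you are blending two equivalent presentations of the same resolution.
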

Using Proposition \ref{limiter_dimension}, we prove the following result:
\begin{thm}
Let $(X,S)$ be  a set-theoretical solution  of the quantum
Yang-Baxter equation, where $X=\{x_{1},..,x_{n}\}$ and $(X,S)$ is non-degenerate, braided and involutive. Let $G$ be the structure group corresponding to $(X,S)$.
 Then  the (co)homological dimension of $G$ is equal to $n$, the number of generators in $X$.
\end{thm}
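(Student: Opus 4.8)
The plan is to apply Proposition \ref{limiter_dimension} with the generating set $\chi$ of simple elements of $M$ and then separately argue that the resulting bound is sharp. For the upper bound: by Theorem \ref{theo:garside} the monoid $M$ is Garside, hence locally Gaussian, and its set $\chi$ of simple elements is by construction closed under right complement and right lcm; since $M$ is also right cancellative (by \cite[Lemma 4.1]{gateva+van}) and Gaussian, the analogous closure under left complement and left lcm holds as well, so $\chi \cup \{\epsilon\}$ satisfies the hypothesis of Proposition \ref{limiter_dimension}. By Theorem \ref{thm_Garside_length_n} the Garside element $\Delta$ has length $n$, and every simple element is a left divisor of $\Delta$, so the norm of every element of $\chi$ is bounded above by $n$. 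Proposition \ref{limiter_dimension} then gives that the (co)homological dimension of $M$ — equivalently of $G$, since $G$ is the group of fractions of $M$ and the (co)homological dimension is a property of the group — is at most $n$.

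For the lower bound I would exhibit a nonzero class in dimension $n$, or equivalently show that $n$ is actually attained. The cleanest route is to invoke the explicit resolution of Dehornoy–Laffont \cite{deh_homologie} (or equivalently the Charney–Meier–Whittlesey complex \cite{charney}): the top-dimensional chain group in that resolution is free of rank equal to the number of $n$-tuples $(s_1,\dots,s_n)$ of simple elements forming a strictly increasing chain under divisibility up to $\Delta$, and this is nonempty precisely because $\Delta$ has norm $n$. A slicker argument: since $G$ contains a free abelian subgroup of rank $n$ — indeed, by Theorem \ref{thm_delta_lcm_atoms} and Theorem \ref{thm_Garside_length_n}, $\Delta = x_1 \vee \dots \vee x_n$ and (as in the square-free case treated by Gateva-Ivanova, and in general for structure groups of such solutions) the $n$-th powers, or a suitable set of $n$ commuting elements coming from the left and right lcm coinciding, generate a copy of $\mathbb{Z}^n$ — the (co)homological dimension of $G$ is at least $n$. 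Combining the two bounds yields equality.

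The main obstacle is the lower bound, specifically producing a $\mathbb{Z}^n$ subgroup (or a nonzero top homology class) cleanly without appealing to machinery not developed in the paper. The upper bound is essentially a formal consequence of the already-established facts that $M$ is Garside, that $\chi$ has the required closure properties, and that $\Delta$ has length $n$ (Theorem \ref{thm_Garside_length_n}); so I expect the bulk of the remaining proof to consist of justifying that the bound $n$ is attained. I would handle this by citing that for structure groups of non-degenerate involutive braided solutions the left and right lcm of the generators coincide — which follows from the symmetry between the roles of the maps $f_i$ and $g_i$ in Claim \ref{debut_form}, exactly as in Gateva-Ivanova's argument for the square-free case — so that $\Delta$ is central (a power of it, at least, is central since $G$ is $\Delta$-pure Garside when $(X,S)$ is indecomposable, and in general a suitable power of $\Delta$ is central in any Garside group), and then extracting $n$ independent commuting elements whose normal form lengths witness a free abelian subgroup of rank $n$; hence $\operatorname{cd}(G) \geq \operatorname{cd}(\mathbb{Z}^n) = n$.
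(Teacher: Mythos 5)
Your upper bound is exactly the paper's argument: the paper's entire proof consists of observing that the set of simples $\chi$ satisfies the hypotheses of Proposition \ref{limiter_dimension} and that every simple has norm at most $n$ because the Garside element has length $n$ (Theorems \ref{thm_delta_lcm_atoms} and \ref{thm_Garside_length_n}), and then concluding. So on that half you and the author coincide. Where you diverge is that you correctly notice Proposition \ref{limiter_dimension} only yields $\operatorname{cd}(G)\leq n$, whereas the theorem asserts equality; the paper's written proof simply jumps from the upper bound to ``is equal to $n$'' without argument, relegating the justification to the remark that follows (Etingof's observation that the classifying space is a compact $n$-manifold coming from a free action of $G$ on $\mathbb{R}^{X}$). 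Your instinct that the lower bound is the real remaining work is therefore sound, and arguably your write-up is more honest than the paper's on this point.

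However, your proposed route to the lower bound has a genuine flaw as stated. Centrality of a power of $\Delta$ (which does hold in any Garside group) produces only an infinite cyclic central subgroup, i.e.\ a $\mathbb{Z}$, not a $\mathbb{Z}^{n}$; and the $\Delta$-pure discussion is irrelevant here since it concerns the center, which is again rank one. To actually get $\operatorname{cd}(G)\geq n$ you need the fact that $G$ is a group of I-type in the sense of \cite{gateva+van,jespers+okninski}, hence torsion-free and abelian-by-finite with free abelian part of rank $n$; a finite-index subgroup of a torsion-free group has the same cohomological dimension (Serre), so $\operatorname{cd}(G)=\operatorname{cd}(\mathbb{Z}^{n})=n$. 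Alternatively one invokes the free action of $G$ on $\mathbb{R}^{X}$ with compact quotient, as in the paper's remark. Either of these closes the gap; ``extracting $n$ independent commuting elements'' from the coincidence of left and right lcms, as you sketch it, is not justified by anything established in the paper and would need a separate proof.
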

\begin{proof}
The set of simples $\chi$ satisfies the conditions of Corollary \ref{limiter_dimension} and the norm of every element in $\chi$ is bounded by $n$, since this is the length of the right lcm of $\chi$ (from Theorems \ref{thm_delta_lcm_atoms} and \ref{thm_Garside_length_n}).
So, the (co)homological dimension of $G$ is is equal to $n$.
\end{proof}
\begin{rem}
It was pointed to us by P.Etingof that this result can be also proved differently: by  showing that the classifying space of the structure group $G$ is a compact manifold of dimension $n$ (as there is a free action of $G$ on $R^{X}$).
\end{rem}
\section{Structure groups and indecomposable solutions}
Picantin defines the notion of   $\Delta$-pure Garside  monoid $M$ in \cite{picantin} and he shows there  that the center of $M$  is the infinite cyclic submonoid  generated by some power of $\Delta$.
We find in this section conditions under which a monoid is   $\Delta$-pure Garside in terms of set-theoretical solutions.
\subsection{$\Delta$-pure Garside monoids}
 Let $\chi$ be the set of simples and $\Delta$ a Garside element in $M$. The \emph{exponent} of $M$ is the order of the automorphism $\phi$, where $\phi$ is the extension of the function $x \rightarrow (x\setminus \Delta)\setminus \Delta$ from $\chi$ into itself.
 \begin{defn}\cite{picantin}
 The monoid $M$ is  $\Delta$\verb`-`\emph{pure} if for every $x,y$ in $X$, it holds that $\Delta_{x} = \Delta_{y}$, where $\Delta_{x} = \vee \{b \setminus x ; b \in M\}$ and  $\vee$ denotes the right lcm.
 \end{defn}
Picantin shows that if  $M$ is a   $\Delta$-pure Garside monoid with exponent $e$ and group of fractions $G$, then the center of $M$ (resp. of $G$) is the infinite cyclic submonoid (resp. subgroup) generated by $\Delta^{e}$.
Let consider the following example, to illustrate these definitions.
\begin{ex}\label{example_deltapure}
Let $X=\{x_{1},x_{2},x_{3}\}$ and let $S(x_{i},x_{j})=(f(j),f^{-1}(i))$, where $f=(1,2,3)$, be a non-degenerate, braided and involutive set-theoretical solution. Let $M$ be the monoid with the same presentation as the structure group of $(X,S)$, the defining relations in $M$ are then: $x_{1}^{2}=x_{2}x_{3}$, $x_{2}^{2}=x_{3}x_{1}$ and $x_{3}^{2}=x_{1}x_{2}$.
So, $X \setminus x_{1}=\{x_{3}\}$, $X \setminus x_{2}=\{x_{1}\}$ and $X \setminus x_{3}=\{x_{2}\}$.
Using the reversing diagram, we obtain inductively that  $M \setminus x_{i}=X\cup \{\epsilon\}$ for $1 \leq i \leq 3$, that is $M$ is   $\Delta$-pure Garside, since $\Delta_{1}=\Delta_{2}=\Delta_{3}$.
As an example, $x_{2}\setminus x_{1}=x_{3}$, so $x_{2}x_{1}\setminus x_{1}=x_{2}$ and so  $x_{2}x_{1}x_{3}\setminus x_{1}=x_{1}$ that is  $X \cup \{\epsilon\} \subseteq M \setminus x_{1}$ and since  $M \setminus x_{1}\subseteq X\cup \{\epsilon\}$ (see lemma \ref{lem_MXin X}) we have the equality.
Each word $x_{i}^{3}$ for $i=1,2,3$ represents a Garside element, denoted by $\Delta$. The set of simples is $\chi=\{\epsilon, x_{1},x_{2},x_{3},x_{1}^{2},x_{2}^{2},x_{3}^{2},\Delta\}$. The exponent of $M$  is equal to 1, since the function $x \rightarrow (x\setminus \Delta)\setminus \Delta$ from $\chi$ to itself is the identity.
As an example, the image of $x_{1}$ is $(x_{1}\setminus \Delta)\setminus \Delta=x_{1}^{2}\setminus \Delta=x_{1}$,
the image of $x_{2}^{2}$ is $(x_{2}^{2}\setminus \Delta)\setminus \Delta=x_{2}\setminus \Delta=x_{2}^{2}$ and so on.
So,  the center of the structure group of $(X,S)$ is cyclic and generated by $\Delta$, using the result of Picantin.
\end{ex}

\subsection{Structure groups and indecomposable solutions}
A non-degenerate, braided and involutive  set-theoretical solution  $(X,S)$ is said to be \emph{decomposable} if $X$ is a union of two nonempty disjoint non-degenerate invariant subsets, where   an \emph{invariant} subset $Y$ is a set that satisfies $S(Y \times Y)\subseteq Y \times Y$. Otherwise, $(X,S)$ is said to be \emph{indecomposable}.\\
Etingof et al give a classification of non-degenerate, braided and involutive solutions with $X$ up to $8$ elements, considering their decomposability and other properties \cite{etingof}.
 Rump proves Gateva-Ivanova's conjecture that every square-free, non-degenerate, involutive and braided solution $(X,S)$  is decomposable, whenever $X$ is finite. Moreover, he shows that an extension to infinite $X$ is false \cite{rump}.
We find a criterion for decomposability of the solution involving the Garside structure of the structure group (monoid), that is we prove the following result.
\begin{thm}\label{thm_deltapure_indecomp}
Let $G$ be the structure group of a non-degenerate, braided and involutive solution $(X,S)$ and let  $M$ be the monoid with the same presentation.
Then $M$ is   $\Delta$-pure Garside if and only if $(X,S)$ is indecomposable.
\end{thm}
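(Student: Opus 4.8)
The plan is to show that each of the two conditions ``$M$ is $\Delta$-pure'' and ``$(X,S)$ is indecomposable'' is equivalent to a single third condition: the finite permutation group $\mathcal{G}:=\langle g_{1},\dots,g_{n}\rangle\leq\operatorname{Sym}(X)$ generated by the maps $g_{i}$ defining $S$ acts transitively on $X$. Since $M$ is already a Garside monoid by Theorem \ref{theo:garside}, being ``$\Delta$-pure Garside'' amounts to being ``$\Delta$-pure'', so combining the two equivalences proves the statement.

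\emph{Step 1: $M$ is $\Delta$-pure $\iff$ $\mathcal{G}$ is transitive.} The heart of this step is an explicit formula for $\Delta_{x}$. By Claim \ref{debut_form}(iii) (the first braid identity), the assignment $x_{i}\mapsto g_{i}$ extends to a monoid homomorphism $M\to\operatorname{Sym}(X)$ whose image is all of $\mathcal{G}$, because a submonoid of a finite group is a subgroup. Correspondingly, Remark \ref{rem_interpret_compl_g} tells us that whenever $b\setminus x\neq\epsilon$ we have $b\setminus x=g_{b}^{-1}(x)$, where $g_{b}^{-1}$ is the image of $b$ under the associated anti-homomorphism, and as $b$ ranges over $M$ these maps range over all of $\mathcal{G}$. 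Hence $\{b\setminus x;\ b\in M\}\setminus\{\epsilon\}\subseteq\mathcal{G}\cdot x$. For the reverse inclusion one shows, by appending a suitable generator to $b$ and reading off the resulting reversing diagram, that $\{b\setminus x;\ b\in M\}\cup\{\epsilon\}$ is stable under each $g_{i}$ and each $g_{i}^{-1}$; since it contains $x=\epsilon\setminus x$, it contains $\mathcal{G}\cdot x$. Therefore $\Delta_{x}=\vee(\mathcal{G}\cdot x)$. Now, by Theorem \ref{thm_Garside_length_n} the right lcm of $k$ distinct generators has length $k$, so $A\mapsto\vee A$ is injective on subsets of $X$ (if $\vee A=\vee B$ then $\vee(A\cup B)=\vee A$ and comparing lengths gives $|A\cup B|=|A|=|B|$, hence $A=B$). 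Consequently $\Delta_{x}=\Delta_{y}$ for all $x,y\in X$ if and only if $\mathcal{G}\cdot x=\mathcal{G}\cdot y$ for all $x,y$, that is, if and only if $\mathcal{G}$ is transitive.

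\emph{Step 2: $(X,S)$ is indecomposable $\iff$ $\mathcal{G}$ is transitive.} If $X=Y\sqcup Z$ with $Y,Z$ nonempty, non-degenerate and invariant, then invariance of $Y$ forces $g_{y}(Y)\subseteq Y$ for every $y\in Y$, hence $g_{y}(Y)=Y$ and $g_{y}(Z)=Z$ by bijectivity and finiteness; the symmetric statement holds for $z\in Z$. Thus every $g_{i}$ preserves the partition $\{Y,Z\}$ and $\mathcal{G}$ is not transitive. Conversely, if $\mathcal{G}$ is not transitive, pick a proper orbit $Y$ and set $Z=X\setminus Y$; both are $\mathcal{G}$-invariant. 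Rewriting Claim \ref{debut_form}(ii) in the form $S(x_{i},x_{j})=\bigl(g_{i}(j),\,g_{g_{i}(j)}^{-1}(i)\bigr)$ shows that every $\mathcal{G}$-invariant subset is invariant, and the restriction of $(X,S)$ to an invariant subset is again non-degenerate (the relevant maps are restrictions of bijections to finite invariant sets; see \cite{etingof}). Hence $X=Y\sqcup Z$ is a genuine decomposition and $(X,S)$ is decomposable. Combining Steps 1 and 2 gives the theorem.

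The step I expect to be the main obstacle is the reverse inclusion $\mathcal{G}\cdot x\subseteq\{b\setminus x;\ b\in M\}$ in Step 1: the naive attempt to realise $h(x)$ as $w\setminus x$ for a word $w$ with induced permutation $h$ breaks down exactly when $w\setminus x=\epsilon$, so one must argue directly with the word-reversing diagrams — for instance, by showing that prepending or appending a generator distinct from the current complement keeps the reversing process from collapsing without altering the induced permutation. The remaining ingredients — injectivity of $A\mapsto\vee A$ on subsets of $X$, and the invariance and non-degeneracy of restrictions — are either immediate from results already proved in the paper or routine checks via Claim \ref{debut_form}.
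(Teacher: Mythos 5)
Your route through the permutation group $\mathcal{G}=\langle g_{1},\dots,g_{n}\rangle$ is genuinely different from the paper's, and about half of it is sound: Step 2 (indecomposable $\iff$ $\mathcal{G}$ transitive) is correct, the injectivity of $A\mapsto\vee A$ on subsets of $X$ via Theorem \ref{thm_Garside_length_n} is a nice and valid observation, and the implication ``not transitive $\Rightarrow$ not $\Delta$-pure'' goes through, because it only uses the easy inclusion $\{b\setminus x:b\in M\}\setminus\{\epsilon\}\subseteq\mathcal{G}\cdot x$ together with $x=1\setminus x$. The gap is in the other direction, and it sits exactly where you flagged it: to get ``transitive $\Rightarrow$ $\Delta$-pure'' you need $\mathcal{G}\cdot x\subseteq M\setminus x$, and your proposed fix does not work. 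Appending a generator $h$ to $b$ realizes $g_{h}^{-1}$ applied to the current value $y=b\setminus x$ \emph{only when} $h\neq y$ (otherwise the reversing collapses to $\epsilon$), so what you can show directly is closure of $(M\setminus x)\setminus\{\epsilon\}$ under $y\mapsto g_{h}^{-1}(y)$ for $h\neq y$ only; the missing case is precisely $g_{y}^{-1}(y)$, which by Lemma \ref{formule} is the unique $w$ with $S(y,w)=(y,w)$, i.e.\ the one letter that cannot be reached from $y$ in one reversing step. There is no uniform one- or two-step detour: the map $h\mapsto g_{h}(w)$ need not be surjective (for the trivial solution it is constant), so you cannot in general solve $g_{h'}^{-1}(z)=g_{y}^{-1}(y)$ with an admissible $h'$. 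This closure statement is true, but it is the actual content of this direction of the theorem, not a routine check.

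For comparison, the paper never proves $M\setminus x=\mathcal{G}\cdot x$ directly. Instead it invokes Picantin's theorem that in a Garside monoid the elements $\Delta_{x_{i}}$ are pairwise either equal or left-coprime; hence the sets $Y_{k}=(M\setminus x_{k})\setminus\{\epsilon\}$ are pairwise equal or disjoint. Since every letter of $X$ occurs as some $x_{k}\setminus x_{i}$, these sets cover $X$, and each is closed under $x\setminus(\,\cdot\,)$ by the identity $x\setminus(w\setminus x_{k})=(wx)\setminus x_{k}$, which makes each $Y_{k}$ an invariant subset. So ``not $\Delta$-pure'' yields a decomposition directly, with Picantin's dichotomy doing the work that your orbit identity would have to do. To salvage your argument you would either need to import that dichotomy (at which point you are essentially running the paper's proof) or supply an independent combinatorial proof that $(M\setminus x)\setminus\{\epsilon\}$ is stable under every $g_{h}^{-1}$, including at the diagonal.
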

In what follows, we use the notation from \cite{picantin}: for $X,Y \subseteq M$,  $Y \setminus X$ denotes the set of elements $b \setminus a$ for $a \in X$ and $b \in Y$. We write $Y \setminus a$ for $Y \setminus \{a\}$ and $b \setminus X$ for $\{b\} \setminus X$. We need the following lemma for the proof of Theorem \ref{thm_deltapure_indecomp}
\begin{lem}\label{lem_YYimpliqMY}
Let $(X,S)$ be the union of non-degenerate invariant subsets  $Y$ and $Z$. Then $M\setminus Y \subseteq Y \cup \{\epsilon\}$ and $M\setminus Z \subseteq Z\cup \{\epsilon\}$.
\end{lem}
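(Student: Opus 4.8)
The plan is to mirror the proof of Lemma \ref{lem_MXin X}$(i)$, refining the containment $M\setminus X\subseteq X\cup\{\epsilon\}$ to $M\setminus Y\subseteq Y\cup\{\epsilon\}$ by exploiting the invariance of the two blocks; the argument for $Z$ is identical by symmetry, so I only treat $Y$. The first and main step is the observation that for every generator $x\in X$ the bijection $g_{x}$ of $X$ restricts to a permutation of $Y$ (and hence of $Z$). Indeed, if $x\in Y$, then invariance of $Y$, i.e. $S(Y\times Y)\subseteq Y\times Y$, gives $g_{x}(Y)\subseteq Y$ (the first component of $S(x,y)$ is $g_{x}(y)$), and since $g_{x}$ is a bijection of the finite set $X$ and $Y$ is finite, $g_{x}(Y)=Y$; writing $X=Y\sqcup Z$ then forces $g_{x}(Z)=X\setminus g_{x}(Y)=Z$. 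If instead $x\in Z$, the same reasoning with $Z$ in place of $Y$ gives $g_{x}(Z)=Z$ and hence $g_{x}(Y)=Y$. In both cases $g_{x}^{-1}(Y)=Y$.

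From this the base case $X\setminus Y\subseteq Y\cup\{\epsilon\}$ is immediate: given $x\in X$ and $a\in Y$, either $x=a$, in which case $x\setminus a=\epsilon$, or $x\neq a$, in which case Lemma \ref{form_compl} gives $x\setminus a=g_{x}^{-1}(a)\in g_{x}^{-1}(Y)=Y$.

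To pass from $X\setminus Y$ to $M\setminus Y$, I would argue by induction on the length of a word representing an element of $M$, exactly as in Lemma \ref{lem_MXin X}$(i)$ via the reversing diagram. Write $b\in M$ as $b=h_{1}h_{2}\cdots h_{k}$ with $h_{i}\in X$, and use the reversing identity $(h_{1}h_{2}\cdots h_{k})\setminus a=(h_{2}\cdots h_{k})\setminus(h_{1}\setminus a)$ (see \cite[Lemma 1.7]{deh_francais}, or Remark \ref{rem_interpret_compl_g}, where this is exactly what the chain of $g^{-1}_{*}$'s encodes). For $a\in Y$ the base case gives $h_{1}\setminus a\in Y\cup\{\epsilon\}$; if $h_{1}\setminus a=\epsilon$ then $b\setminus a=\epsilon$, and otherwise $h_{1}\setminus a=a'\in Y$ and $b\setminus a=(h_{2}\cdots h_{k})\setminus a'$ lies in $Y\cup\{\epsilon\}$ by the induction hypothesis applied to the shorter word $h_{2}\cdots h_{k}$ and the element $a'\in Y$. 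Hence $M\setminus Y\subseteq Y\cup\{\epsilon\}$, and symmetrically $M\setminus Z\subseteq Z\cup\{\epsilon\}$.

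The only real content is the first paragraph — that each $g_{x}$ preserves the partition $X=Y\sqcup Z$ — which uses global non-degeneracy ($g_{x}$ is a bijection of all of $X$), finiteness of $Y$ and $Z$, and invariance of both blocks; once that is in hand, the rest is the same bookkeeping with the reversing diagram already used in Lemma \ref{lem_MXin X}. The point to watch is that $g_{x}$ is only seen directly to fix the block containing $x$, so one genuinely needs the disjointness $X=Y\sqcup Z$ to conclude that it fixes the other block too.
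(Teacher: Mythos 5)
Your proof is correct and follows the same overall strategy as the paper's: first establish $X\setminus Y\subseteq Y\cup\{\epsilon\}$, then propagate to all of $M$ by induction along the reversing diagram, exactly as in Lemma \ref{lem_MXin X}$(i)$. The one place you diverge is the cross-block case $x\in Z$, $a\in Y$: the paper disposes of it by citing \cite[Proposition 2.15]{etingof}, which asserts that $S$ restricts to bijections $Y\times Z\rightarrow Z\times Y$ and $Z\times Y\rightarrow Y\times Z$, and reads off $Z\setminus Y\subseteq Y$ from that. You instead derive the needed fact --- that every $g_{x}$ preserves the partition $X=Y\sqcup Z$ --- directly from the invariance of \emph{both} blocks together with the bijectivity of $g_{x}$ on the finite set $X$, and then conclude via the complement formula of Lemma \ref{form_compl}. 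This is a self-contained and slightly more economical replacement for the citation, since for right complements only the first component of $S$ on the cross blocks matters; your closing observation that disjointness is what lets you transfer $g_{x}(Z)=Z$ to $g_{x}(Y)=Y$ identifies exactly the point where the hypothesis of a decomposition (rather than a single invariant subset) is used.
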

\begin{proof}
If $Y$ is an invariant subset of $X$, then $Y\setminus Y \subseteq Y\cup \{\epsilon\}$, since $S(Y \times Y) \subseteq Y \times Y$.
From \cite[Proposition 2.15]{etingof}, the map $S$ defines bijections $Y \times Z \rightarrow Z \times Y$  and  $Z \times Y \rightarrow Y \times Z$. So, $S(Y \times Z) \subseteq Z \times Y$ and $S(Z \times Y) \subseteq Y \times Z$, and this implies $Z\setminus Y \subseteq Y$.
That is, we have that  $Y\setminus Y \subseteq Y\cup \{\epsilon\}$ and $Z\setminus Y \subseteq Y$, so $X\setminus Y \subseteq Y\cup \{\epsilon\}$ and this implies inductively that $M\setminus Y \subseteq Y\cup \{\epsilon\}$ (see the reversing diagram). The same holds for $Z$.
\end{proof}
\begin{proof}[Proof of Theorem \ref{thm_deltapure_indecomp}]
Assume $(X,S)$ is decomposable, that is $(X,S)$ is the union of non-degenerate invariant subsets  $Y$ and $Z$.
From lemma \ref{lem_YYimpliqMY}, we have $M\setminus Y \subseteq Y\cup \{\epsilon\}$ and $M\setminus Z \subseteq Z\cup \{\epsilon\}$.
 Let $y\in Y$ and $z\in Z$, then $\Delta_{y}= \vee (M\setminus y)$  cannot be the same as $\Delta_{z}= \vee (M\setminus z)$. So, $M$ is not   $\Delta$-pure Garside  .\\
Assume $M$ is not $\Delta$-pure Garside.  Let $x_{k} \in X$, we denote by $Y_{k}$ the set $(M\setminus x_{k})$ from which we remove $\{\epsilon\}$. So, $\Delta_{x_{k}}= \vee (Y_{k})$, where  $Y_{k}$ is a subset of $X$ from Lemma \ref{lem_MXin X}. Let $x_{i},x_{j}$ be in $X$, then  from \cite{picantin}, either $\Delta_{x_{i}}=\Delta_{x_{j}}$ or  the left gcd of $\Delta_{x_{i}}$ and $\Delta_{x_{j}}$ is $\epsilon$.  If $\Delta_{x_{i}}=\Delta_{x_{j}}$, then $Y_{i}=Y_{j}$ and if  the left gcd of $\Delta_{x_{i}}$ and $\Delta_{x_{j}}$ is $\epsilon$, then $Y_{i}$ and $Y_{j}$ are disjoint subsets of $X$. Since $M$ is not $\Delta$-pure Garside, there exist  $x_{i_{1}}$, $x_{i_{2}}$, .., $x_{i_{m}}$ in $X$ such that $Y_{i_{1}}$, $Y_{i_{2}}$,.., $Y_{i_{m}}$ are disjoint subsets of $X$. Moreover,
$X =Y_{i_{1}} \cup Y_{i_{2}} \cup..\cup Y_{i_{m}}$, since each $x \in X$ is equal to an element $x_{k} \setminus x_{i}$  for some $x_{k},x_{i} \in X$ (from the existence of left lcms). We show that $Y_{i_{j}}$ is an invariant subset of $X$, that is $S(Y_{i_{j}},Y_{i_{j}}) \subseteq (Y_{i_{j}},Y_{i_{j}})$. Let $x \in X$ and $y \in Y_{i_{j}}$, then $x \setminus y = x \setminus (w \setminus x_{i_{j}})$, for some $w \in M$. Using the following rule of computation on the complement from \cite[Lemma 1.7]{deh_francais}:  $x \setminus (u \setminus v) =(ux) \setminus v$, we have  $x \setminus y = (wx) \setminus  x_{i_{j}}$, that is  $x \setminus y$ belongs to $Y_{i_{j}}$. In particular, if $x \in Y_{i_{j}}$ then  $S(x,y')=(y,y'')$, where $y',y'' \in Y_{i_{j}}$. So, $Y_{i_{j}}$ is an invariant subset of $X$ for $1 \leq j \leq m$ and this implies that  $(X,S)$ is decomposable.
\end{proof}

\section{From  Garside groups to structure groups}
We establish the converse implication in the one-to-one correspondence between the Garside groups and the structure groups, that is we prove the following:
\begin{thm}\label{garside_struct_group}
Let $\operatorname{Mon} \langle X\mid R \rangle$ be a  Garside monoid such that:\\
$(i)$ There are $n(n-1)/2$ relations in $R$, where $n$ is the cardinality of $X$, and each side of a relation in $R$ has length 2.\\
$(ii)$ If the  word $x_{i}x_{j}$ appears in $R$, then it appears only once.\\
 Then there exists a function $S: X \times X \rightarrow X \times X$ such that $(X,S)$ is  a non-degenerate, involutive and braided set-theoretical solution and $\operatorname{Gp} \langle X\mid R \rangle$ is its structure group.\\
 If additionally: $(iii)$ There is no  word $x_{i}^{2}$  in $R$, then $(X,S)$ is square-free.
 \end{thm}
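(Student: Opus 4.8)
The idea is to reverse-engineer the solution $S$ from the Garside monoid presentation, using conditions $(i)$ and $(ii)$ to guarantee that the relations have exactly the combinatorial shape needed to recover the maps $f_j, g_i$, and then to verify the defining identities of Claim~\ref{debut_form} by exploiting the Garside structure (cancellativity, existence and coherence of lcms).

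\medskip

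First I would use conditions $(i)$ and $(ii)$, together with the fact that $\operatorname{Mon}\langle X\mid R\rangle$ is Garside (hence left cancellative and $(C_0)$ holds), to establish the key combinatorial fact: for every ordered pair $(x_i,x_j)$ with $i\neq j$ there is \emph{exactly one} relation in $R$ of the form $x_i a = x_j b$ with $a,b\in X$, and moreover $a\neq b$. The count $n(n-1)/2$ is exactly the number of unordered pairs $\{x_i,x_j\}$, and condition $(ii)$ says each word $x_ix_j$ occurs at most once as a side of a relation; a pigeonhole / counting argument then forces each relation to pair up two distinct left-letters, and left cancellativity rules out $a=b$ (which would force $x_i=x_j$). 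This is essentially the converse bookkeeping to Claim~\ref{cl_compl}. Having this, I define $S(x_i,x_j)=(x_k,x_\ell)$ by declaring: if the unique relation with left-letter $x_i$ matching the pair $\{i,j'\}$ reads $x_i x_{j} = x_k x_\ell$ — more precisely, I set $S_1(x_i,x_j)=x_i\setminus$-data so that $x_i\cdot x_j$-type words are governed by the complement. Concretely: define $g_i$ by $g_i(a)=j$ where $x_j$ is the left-letter of the relation $x_i a = x_j b$ (so $g_i^{-1}(j)= x_i\setminus x_j$, matching Lemma~\ref{form_compl}), and define $f_a(i)=b$ in the same relation; then set $S(x_i,x_a)=(g_i(a),f_a(i))$.

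\medskip

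Next I would check non-degeneracy, i.e.\ that each $f_i$ and each $g_i$ is a bijection of $X$. Bijectivity of $g_i$ follows because, for fixed $i$, the map $a\mapsto$ (left-letter of the relation starting with $x_i a$) is a bijection: each of the $n-1$ relations involving $x_i$ on the left contributes a distinct pair, and together with the "diagonal" convention one gets a permutation of $\{1,\dots,n\}$ — here condition $(ii)$ (each word appears once) and the count are exactly what prevents collisions. Bijectivity of $f_i$ is the symmetric statement, read off the right-hand letters; right cancellativity of the Garside monoid ($(\tilde C_1)$, which holds since $M$ is Garside) ensures no two relations share the same right-hand side. For involutivity, $S^2=\mathrm{id}$, one must show $g_{g_i(j)}f_j(i)=i$ and $f_{f_j(i)}g_i(j)=j$; the first of these is precisely the statement that the relation $x_i x_j = x_k x_\ell$ can equally be read from the pair $\{k,\ell\}$ back to $\{i,j\}$, which is forced because there is only one relation per unordered pair — reading $x_k x_\ell$ as a left-divisibility datum recovers $x_i$. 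I would phrase this as: $x_k\setminus x_i$ and $x_i\setminus x_k$ are governed by the \emph{same} relation, so applying the complement twice returns the start.

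\medskip

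The hard part will be braidedness, i.e.\ verifying $g_ig_j=g_{g_i(j)}g_{f_j(i)}$, $f_jf_i=f_{f_j(i)}f_{g_i(j)}$ and the mixed relation of Claim~\ref{debut_form}(iii). Here is where I expect to lean most heavily on the Garside hypothesis rather than on pure bookkeeping: the associativity of the right lcm in $M$ (a consequence of coherence / condition $(C_2)$, which is part of being Garside) translates, via the dictionary $x_i\setminus x_j = g_i^{-1}(j)$ of Lemma~\ref{form_compl}, into exactly the braid identities. Specifically, computing $x_i\vee x_j\vee x_k$ in two ways — first pairing $i,j$ then $k$, versus first pairing $j,k$ then $i$ — and using the cube-type rule $(u\vee v)\setminus w = (u\setminus v)\setminus(u\setminus w)$ from \cite[Lemma~1.7]{deh_francais} (already invoked in Lemma~\ref{lem_compl_egal}), yields the first braid relation for the $g$'s; the symmetric computation with left lcms (available since a Garside monoid is Gaussian) gives the relation for the $f$'s; and the triple lcm $x_i\vee x_j\vee x_k$ compared against the reversing diagram in three orders gives the mixed identity. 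The delicate point is bookkeeping which subscript is which when passing between "left-letter" and "right-letter" data, and making sure the length-$2$ condition $(i)$ guarantees that $x_i\vee x_j$ always has length exactly $2$ so that the complements land back in $X$ (this is the analogue of Lemma~\ref{lem_MXin X}$(i)$ in the present setting and follows from $(i)$ plus $(C_2)$). Once braidedness is established, $(X,S)$ is a non-degenerate, involutive, braided set-theoretical solution, and by construction its defining relations $xy=tz$ for $S(x,y)=(t,z)$ are precisely $R$, so $\operatorname{Gp}\langle X\mid R\rangle$ is its structure group. Finally, if additionally $(iii)$ holds — no word $x_i^2$ appears in $R$ — then for each $i$ the relation governing the pair involving $x_i x_i$ cannot be $x_i x_i = \cdots$, which forces $g_i(i)=i$ (equivalently $x_i\setminus x_i=\epsilon$ is the only diagonal datum and $S(x_i,x_i)=(x_i,x_i)$ is excluded as a nontrivial relation but holds as an identity), i.e.\ $S$ fixes the diagonal; since a non-degenerate involutive braided solution with $S_1(x,x)=x$ for all $x$ is by definition square-free, we conclude $(X,S)$ is square-free.
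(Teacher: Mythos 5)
Your overall strategy coincides with the paper's: read $S$ off the relations (with the convention $S(i,j)=(i,j)$ when $x_ix_j$ is not a side of any relation), get well-definedness from condition $(ii)$, get bijectivity of the $g_i$ (resp.\ $f_i$) from uniqueness of right (resp.\ left) lcms, get involutivity from the symmetry of the definition, and get braidedness by translating the coherence and left coherence of the Garside monoid through the dictionary $x_i\setminus x_j=g_i^{-1}(j)$, $x_j\widetilde{\setminus}x_i=f_i^{-1}(j)$ of Lemmas \ref{form_compl} and \ref{compl_gauche}. The first half of your sketch is sound and matches the paper's proof essentially step by step.

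The gap is in the braidedness step, which you correctly identify as the hard part but then underestimate. First, the coherence identity yields $g^{-1}_{g_i^{-1}(k)}g_i^{-1}(m)=g^{-1}_{g_k^{-1}(i)}g_k^{-1}(m)$ only for pairwise \emph{distinct} $i,k,m$; to promote this to the identity of functions $g_ig_{g_i^{-1}(k)}=g_kg_{g_k^{-1}(i)}$ (equation (B) of Lemma \ref{cor_Xcoherent_equations}) one must still exclude the possibility that the two bijections agree on all but two points and swap the remaining two values, and the paper does this with a separate argument that crucially uses involutivity (the $F_1,F_2$ argument in the proof of Lemma \ref{cor_Xcoherent_equations}). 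This is a missing idea, not the ``bookkeeping'' you describe. Second, the mixed identity of Claim \ref{debut_form}(iii) is not obtained by ``comparing the triple lcm in three orders'': there is no third coherence condition mixing the right and left complements, and the paper instead derives (iii) algebraically from the already-established identity $f_jf_i=f_{f_j(i)}f_{g_i(j)}$ together with involutivity and Lemma \ref{formule_gauche} (see the proof of Proposition \ref{coherence_implies_braided}). As written, your plan for the mixed identity would not go through; you would need to carry out that algebraic derivation (or substantiate a genuinely new argument). With these two points supplied, your proof closes and agrees with the paper's.
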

 \subsection{Proof of Theorem \ref{garside_struct_group}}
 In order to prove Theorem \ref{garside_struct_group}, we use the concepts of left lcm and left coherence
 from \cite{deh_francais} and \cite{deh_homologie}, but we do not use exactly the same notations.
   The notation for the \emph{left lcm of $x$  and $y$} is $z=x\widetilde{\vee} y$ and for \emph{the complement at left of $y$ on $x$} the notation is  $x \widetilde{\setminus} y$, where $x\widetilde{\vee} y=(x \widetilde{\setminus} y)y$.

\begin{defn}\cite{deh_homologie}
Let $M$ be a monoid. The \emph{left coherence condition} on $M$ is satisfied if it holds for any $x,y,z \in M$: $((x\widetilde{\setminus} y)\widetilde{\setminus} (z\widetilde{\setminus} y))\widetilde{\setminus} ((x\widetilde{\setminus} z)\widetilde{\setminus} (y\widetilde{\setminus} z))\equiv^{+} \epsilon$.
\end{defn}
This property is also called the \emph{left cube condition}.
We show that if $(X,S)$ is a non-degenerate and  involutive set-theoretical solution, then  $(X,S)$ is braided if and only if  $X$ is coherent and  left coherent.
 The left coherence of $X$  is satisfied if the following condition on all  $x_{i},x_{j},x_{k}$ in $X$ is satisfied: $(x_{i}\widetilde{\setminus} x_{j})\widetilde{\setminus} (x_{k}\widetilde{\setminus} x_{j})= (x_{i}\widetilde{\setminus} x_{k})\widetilde{\setminus} (x_{j}\widetilde{\setminus} x_{k})$, where the equality is in the free monoid since  the complement on the left is totally defined and  its range is $X$. Note that as  in the proof of the coherence,  left coherence on $X$ implies left coherence on $M$, since the monoid $M$ is atomic. Clearly, the following implication is derived from Theorem \ref{theo:garside}:
\begin{lem}\label{braided_implies_leftcoherent}
Assume  $(X,S)$ is non-degenerate and  involutive. If  $(X,S)$ is braided, then $X$ is coherent and left coherent.
\end{lem}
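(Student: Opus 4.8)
The assertion that $X$ is coherent is already contained in Proposition~\ref{M_c1&c2} (it was part of the proof that $M$ is Garside), so the only genuinely new point is left coherence, and the plan is to mirror, on the left, the analysis of Section~3. First I would record the left-handed analogue of Claim~\ref{cl_compl}: if $(X,S)$ is non-degenerate, then for distinct $x_i,x_j\in X$ there is exactly one defining relation of the form $a x_i = b x_j$ with $a,b\in X$ — fixing $i,j$, bijectivity of $f_j$ gives a unique $u$ with $f_j(u)=i$, and then $v=g_u(j)$ is forced, producing the relation $x_u x_j = x_v x_i$ — and, if $(X,S)$ is in addition involutive, $a\neq b$. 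Since all defining relations are length-preserving, this unique common left multiple of $x_i$ and $x_j$ is their left lcm; hence the left complement $\widetilde{\setminus}$ is totally defined on $X\times X$ with range contained in $X$, and the left-coherence identity for generators makes sense as an equality in the free monoid $X^*$.

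Next I would prove the left analogue of Lemma~\ref{form_compl}: reading the relation $x_u x_j = x_v x_i$ off $S(u,j)=(g_u(j),f_j(u))$ gives $x_i\,\widetilde{\setminus}\,x_j = f_j^{-1}(i)$ (and, incidentally, $x_j\,\widetilde{\setminus}\,x_i = g_{f_j^{-1}(i)}(j)$). Using the involutivity relation $f_{f_j(i)}g_i(j)=j$ from Claim~\ref{debut_form}$(ii)$ — substituting $i\mapsto f_j^{-1}(i)$ — one obtains the left analogue of Lemma~\ref{formule}, namely $g_{f_j^{-1}(i)}(j)=f_i^{-1}(j)$. Then, exactly as in the proof of Proposition~\ref{M_c1&c2} but with every $g$ replaced by the corresponding $f$, the left-coherence equation $(x_i\,\widetilde{\setminus}\,x_j)\widetilde{\setminus}(x_k\,\widetilde{\setminus}\,x_j) = (x_i\,\widetilde{\setminus}\,x_k)\widetilde{\setminus}(x_j\,\widetilde{\setminus}\,x_k)$ for a triple of distinct generators reduces to the identity $f_j f_{f_j^{-1}(i)} = f_i f_{f_i^{-1}(j)}$; applying the braided relation $f_j f_i = f_{f_j(i)}f_{g_i(j)}$ of Claim~\ref{debut_form}$(iii)$ with $i\mapsto f_j^{-1}(i)$ yields $f_j f_{f_j^{-1}(i)} = f_i f_{g_{f_j^{-1}(i)}(j)}$, and the left analogue of Lemma~\ref{formule} rewrites this as $f_i f_{f_i^{-1}(j)}$, which is symmetric in $i$ and $k$ — giving the claim. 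Since $M$ is atomic (Proposition~\ref{complement_f_defined}), left coherence on $X$ upgrades to left coherence on all of $M$ by the mirror of Proposition~\ref{atomic_coh}.

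There is also a softer way to see left coherence, closer to the phrasing ``derived from Theorem~\ref{theo:garside}'': by that theorem $M$ is a Garside monoid, and ``Gaussian'' together with ``possessing a Garside element'' are left--right symmetric conditions, so the opposite monoid $M^{\mathrm{op}}$ is again Garside; the left cube condition for $M$ is the right cube condition for $M^{\mathrm{op}}$, which holds for every Garside monoid. The main thing requiring care — in either route — is the bookkeeping that the left-handed statements (total definedness of $\widetilde{\setminus}$ on $X$, its range lying in $X$, and $a\neq b$ in the mirror of Claim~\ref{cl_compl}) really do follow from non-degeneracy and involutivity just as their right-handed counterparts do; once that symmetry is in place, no individual step is harder than the corresponding one in Section~3.
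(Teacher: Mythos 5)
Your proposal is correct, and it actually contains the paper's own argument as its second, ``softer'' route: the paper's entire justification for this lemma is the one-line remark that the implication is derived from Theorem~\ref{theo:garside}, i.e.\ precisely your observation that $M$ is Garside, that the Gaussian/Garside axioms are left--right symmetric, and that left lcms in a Garside monoid exist and are unique, which forces the left cube condition. Your primary, computational route is genuinely different from what is printed: by mirroring Section~3 on the left you re-derive, in the forward direction, exactly the identities that the paper only isolates later (equations (A) and (B) of Lemma~\ref{cor_Xcoherent_equations}) for the \emph{converse} implication, and your derivation of $f_jf_{f_j^{-1}(k)}=f_kf_{f_k^{-1}(j)}$ from Claim~\ref{debut_form}$(iii)$ plus the left analogue of Lemma~\ref{formule} is the exact mirror of the computation in Proposition~\ref{M_c1&c2}. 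The explicit route buys independence from the Garside machinery and makes visible why braidedness is the operative hypothesis; the soft route buys brevity. Your preliminary bookkeeping (unique relation $ax_i=bx_j$ from bijectivity of $f_j$; $a\neq b$ from involutivity together with bijectivity of $g_a$; hence $\widetilde{\setminus}$ totally defined on $X$ with range in $X$) matches the remark the paper makes right after Claim~\ref{cl_compl} and in the sentences preceding the lemma. Two purely cosmetic slips: the identity your reduction lands on should involve the two ``divisors'' $j$ and $k$ with $i$ the common argument, namely $f_jf_{f_j^{-1}(k)}=f_kf_{f_k^{-1}(j)}$, and correspondingly the final expression is symmetric in $j$ and $k$, not in $i$ and $k$; neither affects the validity of the argument.
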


The proof of the converse implication is less trivial and requires a lot of computations. Before we proceed, we first express the left complement in terms of the functions $f_{i}$ that define $(X,S)$. As the proofs are symmetric to those done in Section 3.2 with the right complement we omit them.
\begin{lem}\label{compl_gauche}
 Assume $(X,S)$ is non-degenerate. Let $x_{i},x_{j}$ be different elements in $X$.
Then $x_{j}\widetilde{\setminus} x_{i}=f_{i}^{-1}(j)$.
\end{lem}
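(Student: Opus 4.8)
The plan is to prove Lemma~\ref{compl_gauche} as the exact left-handed mirror of Lemma~\ref{form_compl}, so I would run that short argument verbatim with right divisibility, right lcm and right complement replaced by their left-handed counterparts, and with the roles of the functions $g_i$ and $f_i$ interchanged.

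Concretely: the left complement on letters is read off from the defining relations, just as the right complement is in Proposition~\ref{complement_f_defined} — the left-handed analogue of Claim~\ref{cl_compl} says that, $(X,S)$ being non-degenerate, for any two different generators $x_i,x_j$ there is exactly one defining relation of the form $x_a x_i = x_b x_j$ with $x_a,x_b\in X$, and then by definition $x_j\widetilde{\setminus} x_i=x_a$ (equivalently $x_j\widetilde{\vee} x_i=x_a x_i$). Now $x_a x_i = x_b x_j$ means $S(a,i)=(b,j)$, and since $S(a,i)=(g_a(i),f_i(a))$ by the definition of $S$, we get $f_i(a)=j$, hence $a=f_i^{-1}(j)$. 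Therefore $x_j\widetilde{\setminus} x_i=x_a=f_i^{-1}(j)$, in the convention that identifies a generator with its index. The existence and uniqueness of the relation $x_a x_i = x_b x_j$ is itself the left mirror of Claim~\ref{cl_compl}: among the relations $x_p x_q=x_{g_p(q)}x_{f_q(p)}$, those whose left-hand side ends in $x_i$ are exactly the ones with $q=i$, and $f_i$ being a bijection (Claim~\ref{debut_form}) singles out the unique $p=a$ with $f_i(a)=j$.

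There is essentially no obstacle here: the whole content is solving $f_i(a)=j$, exactly as solving $g_i(a)=j$ was the content of Lemma~\ref{form_compl}. The only thing to be careful about when writing it up is the orientation of the left-complement convention $x\widetilde{\vee} y=(x\widetilde{\setminus} y)\,y$ — this is what forces $x_a$ (the letter to the \emph{left} of $x_i$), rather than $x_b$, to be the one named $x_j\widetilde{\setminus} x_i$; the mirror-image convention would instead produce the incorrect formula $x_j\widetilde{\setminus} x_i=g_a(i)$.
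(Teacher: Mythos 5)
Your proof is correct and is exactly what the paper intends: the paper omits the argument, stating only that it is symmetric to that of Lemma~\ref{form_compl}, and your write-up is precisely that mirror argument (read off the unique relation $x_a x_i = x_b x_j$, note $S(a,i)=(g_a(i),f_i(a))$ forces $f_i(a)=j$, hence $a=f_i^{-1}(j)$). You have also read the convention $x\widetilde{\vee}y=(x\widetilde{\setminus}y)\,y$ correctly, which is the only place one could go wrong.
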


\begin{lem}\label{formule_gauche}
Assume $(X,S)$ is non-degenerate and  involutive. Let $x_{i}, x_{k}$ be elements in $X$.
Then $f^{-1}_{k}(i)=g_{f^{-1}_{i}(k)}(i)$.
\end{lem}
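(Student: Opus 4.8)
The plan is to mirror exactly the argument used for Lemma \ref{formule}, but now using the involutivity identity that pairs with $f$ rather than the one that pairs with $g$. Recall that claim \ref{debut_form}$(ii)$ gives two involutivity identities: $g_{g_{i}(j)}f_{j}(i)=i$ was the one exploited in Lemma \ref{formule}; here I would instead use the companion identity $f_{f_{j}(i)}g_{i}(j)=j$, valid for all $1\le i,j\le n$.

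First I would rewrite that identity in the form that isolates an $f^{-1}$. Starting from $f_{f_{j}(i)}\bigl(g_{i}(j)\bigr)=j$, apply $f_{f_{j}(i)}^{-1}$ to both sides to obtain $g_{i}(j)=f_{f_{j}(i)}^{-1}(j)$. Now I want the subscript $f_{j}(i)$ to become a fixed index $k$, so I substitute $i$ by $f_{j}^{-1}(k)$ (legitimate since $f_{j}$ is a bijection by non-degeneracy, claim \ref{debut_form}$(i)$); then $f_{j}(i)=k$ and the identity reads $g_{f_{j}^{-1}(k)}(j)=f_{k}^{-1}(j)$. Renaming the still-free index $j$ to $i$ gives precisely $f^{-1}_{k}(i)=g_{f^{-1}_{i}(k)}(i)$, which is the claimed formula.

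Alternatively — and this is what the sentence "As the proofs are symmetric to those done in Section 3.2" suggests one should invoke — one simply observes that replacing $S$ by its "flip" $\alpha S \alpha$ interchanges the roles of the $f_i$'s and the $g_i$'s and of left and right complements (Lemmas \ref{compl_gauche} and \ref{form_compl} being symmetric), while non-degeneracy and involutivity are preserved; under this symmetry Lemma \ref{formule} transforms into the present statement. Either route is short, so there is no genuine obstacle: the only point requiring a moment's care is bookkeeping of which of the two involutivity identities in claim \ref{debut_form}$(ii)$ is the correct partner and that the index substitution $i\mapsto f_j^{-1}(k)$ is the one that produces the subscript appearing on the right-hand side. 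I would write out the three-line computation of the previous paragraph explicitly, since it is self-contained and leaves nothing to the reader.
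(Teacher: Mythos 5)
Your argument is correct and is precisely the ``symmetric'' proof the paper alludes to when it omits this one: you use the companion involutivity identity $f_{f_{j}(i)}g_{i}(j)=j$ from Claim \ref{debut_form}$(ii)$ and perform the substitution mirroring the one in Lemma \ref{formule} (where $g_{g_{i}(j)}f_{j}(i)=i$ was used with $j\mapsto g_{i}^{-1}(k)$). The index bookkeeping checks out, so nothing further is needed.
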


\begin{lem}\label{cor_Xcoherent_equations}
Assume $(X,S)$ is non-degenerate. If  $X$ is coherent and left coherent,  then for every $i,j,k$ the following equations hold:\\
(A) $f_{j}f_{f^{-1}_{j}(k)}=f_{k}f_{f_{k}^{-1}(j)}$\\
(B) $ g_{i}g_{g_{i}^{-1}(k)}=g_{k}g_{g_{k}^{-1}(i)}$
\end{lem}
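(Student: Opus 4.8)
The plan is to prove $(A)$ and $(B)$ separately by mirror-image arguments: $(B)$ comes out of the coherence of $X$, and $(A)$ out of the left coherence of $X$. Concretely, $(B)$ is the computation in the proof of Proposition~\ref{M_c1&c2} run backwards: there the braid relations were fed in to \emph{produce} the coherence identity, whereas here coherence is the hypothesis and what drops out is exactly $g_{i}g_{g_{i}^{-1}(k)}=g_{k}g_{g_{k}^{-1}(i)}$. The only real device is the dictionary turning iterated complements of generators into compositions of the structure functions, namely Lemma~\ref{form_compl} on the right ($x_{i}\setminus x_{j}=g_{i}^{-1}(j)$) and Lemma~\ref{compl_gauche} on the left ($x_{i}\widetilde{\setminus}x_{j}=f_{j}^{-1}(i)$).

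For $(B)$ I would fix distinct $i,k$ (the case $i=k$ being trivial) and pick any $m\neq i,k$. Applying Lemma~\ref{form_compl} twice, $x_{i}\setminus x_{k}=g_{i}^{-1}(k)$ and $x_{i}\setminus x_{m}=g_{i}^{-1}(m)$ are distinct generators (since $k\neq m$), so $(x_{i}\setminus x_{k})\setminus(x_{i}\setminus x_{m})=g_{g_{i}^{-1}(k)}^{-1}\bigl(g_{i}^{-1}(m)\bigr)$, and symmetrically $(x_{k}\setminus x_{i})\setminus(x_{k}\setminus x_{m})=g_{g_{k}^{-1}(i)}^{-1}\bigl(g_{k}^{-1}(m)\bigr)$. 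Coherence of $X$ forces these two generators to be equal, so the bijections $g_{g_{i}^{-1}(k)}^{-1}\circ g_{i}^{-1}$ and $g_{g_{k}^{-1}(i)}^{-1}\circ g_{k}^{-1}$ of $X$ (bijective by non-degeneracy, Claim~\ref{debut_form}$(i)$) agree at every argument different from $i$ and $k$. Once one knows they agree everywhere, inverting this functional identity yields $(B)$.

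Equation $(A)$ I would prove the same way, with $\setminus,\vee,g$ replaced throughout by $\widetilde{\setminus},\widetilde{\vee},f$: using Lemma~\ref{compl_gauche} in the form $x_{a}\widetilde{\setminus}x_{b}=f_{b}^{-1}(a)$, the left coherence identity on a triple of distinct generators $(x_{i},x_{j},x_{k})$ becomes $f_{f_{j}^{-1}(k)}^{-1}\bigl(f_{j}^{-1}(i)\bigr)=f_{f_{k}^{-1}(j)}^{-1}\bigl(f_{k}^{-1}(i)\bigr)$, that is $f_{f_{j}^{-1}(k)}^{-1}\circ f_{j}^{-1}=f_{f_{k}^{-1}(j)}^{-1}\circ f_{k}^{-1}$ as maps of $X$, and inverting gives $f_{j}f_{f_{j}^{-1}(k)}=f_{k}f_{f_{k}^{-1}(j)}$.

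The step I expect to be the real obstacle is the passage ``agree off $\{i,k\}$ $\Rightarrow$ agree on all of $X$'': coherence says nothing about a triple two of whose generators coincide, so it does not by itself exclude the possibility that the two bijections differ by the transposition $(i\ k)$. This has to be ruled out by hand — by writing out what that alternative would assert at the two remaining arguments and using injectivity of the $g_{i}$ (and, where convenient, the involutivity relations $g_{g_{i}(j)}f_{j}(i)=i$ of Claim~\ref{debut_form}$(ii)$, which hold in the situation of interest) to see that it forces $g_{i}^{-1}(i)=g_{k}^{-1}(k)$ and hence $i=k$, a contradiction. Apart from this verification, everything is the routine translation of complements into structure functions sketched above.
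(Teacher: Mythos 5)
Your strategy is the paper's own: use $x_i\setminus x_j=g_i^{-1}(j)$ and $x_i\widetilde{\setminus}x_j=f_j^{-1}(i)$ to turn the (left) coherence identity on a triple of distinct generators into the statement that two explicit bijections of $X$ agree at every argument outside a two-element set, note that two bijections agreeing off a pair either agree everywhere or differ by the transposition of the two remaining values, and then kill the second alternative. You have also correctly isolated that last step as the only real content, and correctly observed that involutivity --- not listed in the lemma's hypotheses, but used in the paper's proof as well --- is what kills it.

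The one genuine problem is your description of what the transposition alternative asserts. For (B), writing $G_1=g^{-1}_{g_i^{-1}(k)}\circ g_i^{-1}$ and $G_2=g^{-1}_{g_k^{-1}(i)}\circ g_k^{-1}$, the swap equation $G_1(k)=G_2(i)$ reads $g_a^{-1}(a)=g_b^{-1}(b)$ with $a=g_i^{-1}(k)$ and $b=g_k^{-1}(i)$; it does not read $g_i^{-1}(i)=g_k^{-1}(k)$, so the identity you say is ``forced'' is not what the computation yields, and the contradiction cannot be reached along the line you sketch. The correct closing argument --- which is exactly what the paper does, in the mirror-image case (A) --- is: set $m=g_a^{-1}(a)=g_b^{-1}(b)$; involutivity upgrades $g_a(m)=a$ to $S(a,m)=(a,m)$ and likewise $S(b,m)=(b,m)$, so $f_m(a)=m=f_m(b)$ and hence $a=b$ by injectivity of $f_m$; thus $g_i^{-1}(k)=g_k^{-1}(i)$, and one further application of involutivity converts this into $f_a(i)=f_a(k)$ with $i\neq k$ (equivalently, a forbidden relation of the form $x_lx_j=x_lx_k$), a contradiction. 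So your plan does close, but only after replacing the identity you wrote by the one actually produced; note also that the paper needs only one of the two swap equations to finish.
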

\begin{proof}
From lemma \ref{compl_gauche}, we have for all different $1 \leq i,j,k \leq n$ that  $(x_{i}\widetilde{\setminus} x_{j})\allowbreak \widetilde{\setminus} (x_{k}\widetilde{\setminus} x_{j})=
  f^{-1}_{f^{-1}_{j}(k)} f_{j}^{-1} (i)$  and  $(x_{i}\widetilde{\setminus} x_{k})\widetilde{\setminus} (x_{j}\widetilde{\setminus} x_{k})=  f^{-1}_{f_{k}^{-1}(j)} f_{k}^{-1} (i)$.
  If $X$ is left coherent, then for all different $1 \leq i,j,k \leq n$, we have $(*)$  $f^{-1}_{f^{-1}_{j}(k)} f_{j}^{-1} (i)=$ $f^{-1}_{f_{k}^{-1}(j)} f_{k}^{-1} (i)$. If $j=k$, the  equality (A) holds trivially, so let fix $j$ and $k$ such that $j \neq k$. We denote
  $F_{1}=$ $f^{-1}_{f^{-1}_{j}(k)} f_{j}^{-1}$ and $F_{2}=$ $f^{-1}_{f_{k}^{-1}(j)} f_{k}^{-1}$, these functions are bijective, since these are compositions of bijective functions and satisfy  $F_{1}(i)=F_{2}(i)$ whenever $i \neq j,k$. It remains to show that  $F_{1}(k)=F_{2}(k)$ and $F_{1}(j)=F_{2}(j)$.  Assume by contradiction that $F_{1}(k)=F_{2}(j)$ and $F_{1}(j)=F_{2}(k)$, so there is $1 \leq m \leq n$ such that
  $m$= $f^{-1}_{f^{-1}_{j}(k)} f_{j}^{-1} (k)=$ $f^{-1}_{f_{k}^{-1}(j)} f_{k}^{-1} (j)$, that is $f_{f^{-1}_{j}(k)}(m)= f_{j}^{-1} (k)$ and $f_{f_{k}^{-1}(j)} (m)=f_{k}^{-1} (j)$.  That is $S(m,f^{-1}_{j}(k))=(m,f^{-1}_{j}(k))$ and $S(m,f^{-1}_{k}(j))=(m,f^{-1}_{k}(j))$,  since $(X,S)$ is involutive. So, $g_{m}\allowbreak (f^{-1}_{j}(k))=m$ and $g_{m}(f^{-1}_{k}(j))=m$. Since $g_{m}$ is bijective, this implies that there is $1 \leq l \leq n$ such that $l=$ $f^{-1}_{j}(k)=f^{-1}_{k}(j)$, that is $S(l,j)=(l,k)$.
 But, since $j\neq k$,  this contradicts the fact that $(X,S)$ is involutive.
  So, since the functions $f_{.}$ are bijective, (*) is equivalent to (A).
 Equation (B) is obtained in the same way using the coherence of $X$ (see lemma \ref{form_compl}).
 \end{proof}
\begin{prop}\label{coherence_implies_braided}
Let $(X,S)$ be  a non-degenerate and  involutive set-theoretical solution. If  $X$ is coherent and left coherent, then $(X,S)$ is braided.
\end{prop}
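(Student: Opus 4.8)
The plan is to verify, for $(X,S)$ non-degenerate and involutive, the three identities characterizing braidedness in Claim~\ref{debut_form}(iii), namely $(1)$ $g_ig_j = g_{g_i(j)}g_{f_j(i)}$, $(2)$ $f_jf_i = f_{f_j(i)}f_{g_i(j)}$ and $(3)$ $f_{g_{f_j(i)}(k)}g_i(j) = g_{f_{g_j(k)}(i)}f_k(j)$, for all $i,j,k$. The input is Lemma~\ref{cor_Xcoherent_equations}: coherence and left coherence of $X$ furnish the identities $(A)$ $f_jf_{f_j^{-1}(k)} = f_kf_{f_k^{-1}(j)}$ and $(B)$ $g_ig_{g_i^{-1}(k)} = g_kg_{g_k^{-1}(i)}$, valid for all indices. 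So the proof is purely a matter of massaging $(A)$ and $(B)$ into $(1)$--$(3)$ using the involutivity relations $g_{g_i(j)}f_j(i)=i$ and $f_{f_j(i)}g_i(j)=j$ of Claim~\ref{debut_form}(ii).

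First I would deduce $(1)$ from $(B)$ by the substitution $k\mapsto g_i(j)$: then $g_i^{-1}(k)=j$, so the left side becomes $g_ig_j$, while the subscript $g_k^{-1}(i)=g_{g_i(j)}^{-1}(i)$ on the right equals $f_j(i)$ because $g_{g_i(j)}f_j(i)=i$; hence $g_ig_j=g_{g_i(j)}g_{f_j(i)}$. Symmetrically, substituting $k\mapsto f_j(i)$ in $(A)$ and using $f_{f_j(i)}g_i(j)=j$ gives $(2)$. Both steps are short and essentially repeat the manipulation already carried out in Proposition~\ref{M_c1&c2} (with the roles of $f$ and $g$, resp. right and left, interchanged).

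The delicate point is condition $(3)$, the ``mixed'' relation. The naive hope that it is automatic once $(1)$ and $(2)$ hold — on the grounds that then $S^{12}S^{23}S^{12}$ and $S^{23}S^{12}S^{23}$ already agree in their first and third components — is circular, since a bijection of $X^3$ is not determined by two of its three coordinate functions. Instead I would derive $(3)$ from $(2)$ and involutivity alone. Write $\lambda=g_i(j)$, $\rho=f_j(i)$, so $f_\rho(\lambda)=j$ by involutivity. Applying $(2)$ in the form $f_{g_\rho(k)}=f_{f_k(\rho)}^{-1}f_kf_\rho$ turns the left side $f_{g_\rho(k)}(\lambda)$ into $f_{f_k(\rho)}^{-1}(f_k(j))$. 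Now rewrite $f_k(\rho)=f_k(f_j(i))$ by $(2)$ once more as $f_{f_k(j)}(f_{g_j(k)}(i))$, and put $\mu=f_k(j)$, $\nu=f_{g_j(k)}(i)$; then $f_{f_k(\rho)}^{-1}(f_k(j))=f_{f_\mu(\nu)}^{-1}(\mu)$, and a last application of the involutivity identity $f_{f_\mu(\nu)}g_\nu(\mu)=\mu$ identifies this with $g_\nu(\mu)=g_{f_{g_j(k)}(i)}(f_k(j))$, which is precisely the right side of $(3)$.

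With $(1)$, $(2)$ and $(3)$ established, Claim~\ref{debut_form}(iii) gives that $(X,S)$ is braided; combined with Lemma~\ref{braided_implies_leftcoherent} this yields the desired equivalence. The only genuine work is keeping track of the substitutions, and I expect the derivation of $(3)$ to be the step most susceptible to index errors — it is the one I would write out in full detail.
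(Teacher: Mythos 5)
Your proposal is correct and follows essentially the same route as the paper: identities $(1)$ and $(2)$ are extracted from $(B)$ and $(A)$ by the same substitutions together with involutivity (the paper packages the involutivity step as Lemmas \ref{formule} and \ref{formule_gauche}), and the mixed relation $(3)$ is likewise obtained in the paper by applying the $f$-braid relation twice and invoking involutivity, exactly as in your derivation. Your index bookkeeping in the $(3)$ step checks out.
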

\begin{proof}
We need to show that the functions $f_{i}$ and $g_{i}$ satisfy the following equations from lemma \ref{debut_form}:\\
\emph{$(i)$}  $f_{j}f_{i}=f_{f_{j}(i)}f_{g_{i}(j)}$, $1 \leq i,j \leq n$. \\
\emph{$(ii)$} $g_{i}g_{j}=g_{g_{i}(j)}g_{f_{j}(i)}$, $1 \leq i,j \leq n$. \\
\emph{$(iii)$} $f_{g_{f_{l}(m)}(j)}g_{m}(l)=g_{f_{g_{l}(j)}(m)}f_{j}(l)$, $1 \leq j,l,m \leq n$.\\
From lemma \ref{cor_Xcoherent_equations}, we have for $1 \leq j, k \leq n$ that (A)  $f_{j}f_{f^{-1}_{j}(k)}=f_{k}f_{f^{-1}_{k}(j)}$. Assume $m=f^{-1}_{j}(k)$, that is $k=f_{j}(m)$ and we replace in formula (A) $f^{-1}_{j}(k)$ by $m$  and $k$ by $f_{j}(m)$, then we obtain $f_{j}f_{m}=f_{f_{j}(m)}f_{f^{-1}_{f_{j}(m)}(j)}$.
In order to show that \emph{$(i)$} holds, we  show that $f^{-1}_{f_{j}(m)}(j)=g_{m}(j)$. From lemma \ref{formule_gauche}, we have $f^{-1}_{l}(j)=g_{f^{-1}_{j}(l)}(j)$ for every $j,l$, so by replacing $l$ by  $f_{j}(m)$, we obtain $f^{-1}_{f_{j}(m)}(j)=g_{m}(j)$. So, \emph{$(i)$} holds.\\
From Corollary \ref{cor_Xcoherent_equations}, we have for $1 \leq j \neq k \leq n$ that (B) $ g_{i}g_{g_{i}^{-1}(k)}=g_{k}g_{g_{k}^{-1}(i)}$. Assume $m=g^{-1}_{i}(k)$, that is $k=g_{i}(m)$ and we replace in formula (B) $g^{-1}_{i}(k)$ by $m$  and $k$ by $g_{i}(m)$, then we obtain $g_{i}g_{m}=g_{g_{i}(m)}g_{g^{-1}_{g_{i}(m)}(i)}$.
In order to show that \emph{$(ii)$} holds, we show that $g^{-1}_{g_{i}(m)}(i)=f_{m}(i)$. From lemma \ref{formule}, we have $g^{-1}_{l}(i)=f_{g^{-1}_{i}(l)}(i)$, so by replacing $l$ by  $g_{i}(m)$, we obtain $g^{-1}_{g_{i}(m)}(i)=f_{m}(i)$. So, \emph{$(ii)$} holds.\\
It remains to show that \emph{$(iii)$} holds.
From \emph{$(i)$}, we have for $1 \leq i,j \leq n$ that  $f_{j}f_{i}=f_{f_{j}(i)}f_{g_{i}(j)}$ and this is equivalent to
$f_{g_{i}(j)}=f^{-1}_{f_{j}(i)}f_{j}f_{i}$.
We replace $i$ by $f_{l}(m)$ for some $1 \leq l,m \leq n$ in the formula.
We obtain $f_{g_{f_{l}(m)}(j)}=f^{-1}_{f_{j}f_{l}(m)}f_{j}f_{f_{l}(m)}$.
By applying these functions on  $g_{m}(l)$ on both sides, we obtain
$f_{g_{f_{l}(m)}(j)}g_{m}(l)=f^{-1}_{f_{j}f_{l}(m)}f_{j}f_{f_{l}(m)}g_{m}(l)$.
Since $(X,S)$ is involutive, we have $f_{f_{l}(m)}g_{m}(l)=l$ (see lemma \ref{debut_form}).  So,
$f_{g_{f_{l}(m)}(j)}g_{m}(l)=f^{-1}_{f_{j}f_{l}(m)}f_{j}(l)$.
From lemma \ref{formule_gauche}, we have $f^{-1}_{i}(k)=g_{f^{-1}_{k}(i)}(k)$ for every $i,k$, so  replacing $i$ by $f_{j}f_{l}(m)$ and $k$ by $f_{j}(l)$ gives
$f^{-1}_{f_{j}f_{l}(m)}(f_{j}(l))=g_{f^{-1}_{f_{j}(l)}f_{j}f_{l}(m)}(f_{j}(l))$.
So, $f_{g_{f_{l}(m)}(j)}g_{m}(l)=g_{f^{-1}_{f_{j}(l)}f_{j}f_{l}(m)}(f_{j}(l))$.
From \emph{$(i)$}, we have that $f^{-1}_{f_{j}(l)}f_{j}f_{l}(m)=f_{g_{l}(j)}(m)$,  so $f_{g_{f_{l}(m)}(j)}g_{m}(l)=g_{f_{g_{l}(j)}(m)}f_{j}(l)$, that is \emph{$(iii)$} holds.
\end{proof}

\begin{proof}[Proof of Theorem \ref{garside_struct_group}]

First,  we define a function $S: X \times X \rightarrow X \times X$ and $2n$ functions $f_{i},g_{i}$ for $1 \leq i \leq n$, such that $S(i,j)=(g_{i}(j),f_{j}(i))$ in the following way: if there is a relation $x_{i}x_{j}=x_{k}x_{l}$ then we define $S(i,j)=(k,l)$,  $S(k,l)=(i,j)$ and we define $g_{i}(j)=k$, $f_{j}(i)=l$, $g_{k}(l)=i$ and $f_{l}(k)=j$.
If the word $x_{i}x_{j}$ does not appear as a side of a relation, then we define $S(i,j)=(i,j)$ and we define $g_{i}(j)=i$ and $f_{j}(i)=j$.
We show that the functions $f_{i}$ and $g_{i}$ are well defined for $1 \leq i \leq n$:
assume $g_{i}(j)=k$ and $g_{i}(j)=k'$ for some $1 \leq j,k,k' \leq n$ and $k \neq k'$, then it means from the definition of $S$ that
$S(i,j)=(k,.)$ and $S(i,j)=(k',..)$ that is the word $x_{i}x_{j}$ appears twice in $R$ and this contradicts \emph{$(ii)$}. The same argument holds for the proof that the functions $f_{i}$  are well defined.\\
We show that the functions $f_{i}$ and $g_{i}$ are bijective for $1 \leq i \leq n$:
assume $g_{i}(j)=k$ and $g_{i}(j')=k$ for some $1 \leq j,j',k \leq n$ and $j \neq j'$, then  from the
definition of $S$ we have $S(i,j)=(k,l)$ and $S(i,j')=(k,l')$ for some $1 \leq l,l'\leq n$,  that is there are the following  two defining relations in $R$: $x_{i}x_{j}=x_{k}x_{l}$ and $x_{i}x_{j'}=x_{k}x_{l'}$.
But this means that $x_{i}$ and $x_{k}$ have two different right lcms and this contradicts the assumption that the monoid is Garside. So, these functions are injective and since $X$ is finite they are bijective.
Assuming  $f_{i}$ not injective yields  generators with two different left lcms.
So, $S$ is well-defined and $(X, S)$ is non-degenerate and from \emph{$(ii)$} $(X, S)$ is also involutive. It remains to show that $(X, S)$ is braided:
since $\operatorname{Mon} \langle X \mid R \rangle$ is Garside, it is coherent and left coherent so from lemma \ref{coherence_implies_braided}, $(X,S)$ is braided.
Obviously condition \emph{$(iii)$} implies that $(X,S)$ is also square-free.
\end{proof}
\subsection{The one-to-one correspondence}
It remains to establish the one-to-one correspondence  between  structure groups of  non-degenerate, involutive  and braided set-theoretical solutions of the quantum Yang-Baxter equation and a class of Garside groups admitting a certain presentation and in order to that  we need the following terminology and claims.
\begin{defn}
A \emph{tableau monoid} is a monoid $\operatorname{Mon} \langle X\mid R \rangle$  satisfying the condition that each side of a relation in $R$ has length 2.
 \end{defn}

 \begin{defn}
 We say that two tableau monoids $\operatorname{Mon} \langle X\mid R \rangle$ and $\operatorname{Mon} \langle X'\mid R' \rangle$ are \emph{t-isomorphic} if there exists a bijection $s:X \rightarrow X'$ such that $x_{i}x_{j}=x_{k}x_{l}$ is a relation in $R$ if and only if $s(x_{i})s(x_{j})=s(x_{k})s(x_{l})$ is a relation in $R'$.
 \end{defn}
 Clearly, if two tableau monoids are t-isomorphic then they are isomorphic and the definition is enlarged to groups.
Set-theoretical solutions  $(X,S)$ and  $(X',S')$  are \emph{isomorphic} if there exists a bijection $\phi: X \rightarrow X'$ which maps $S$ to $S'$, that is $S'(\phi(x),\phi(y))=(\phi(S_{1}(x,y)),\phi(S_{2}(x,y)))$.
\begin{prop}
Let $(X,S)$ and  $(X',S')$ be non-degenerate, involutive and braided set-theoretical solutions.
Assume $(X,S)$ and  $(X',S')$  are isomorphic.
Then their structure groups (monoids) $G$ and $G'$ are t-isomorphic tableau groups (monoids). Conversely, if  $\operatorname{Mon} \langle X\mid R \rangle$ and $\operatorname{Mon} \langle X\mid R' \rangle$ are t-isomorphic tableau Garside monoids each satisfying the conditions $(i)$ and $(ii)$ from Theorem \ref{garside_struct_group}, then the solutions  $(X,S)$ and  $(X',S')$  defined respectively by $\operatorname{Mon} \langle X\mid R \rangle$  and $\operatorname{Mon} \langle X\mid R' \rangle$   are isomorphic.
\end{prop}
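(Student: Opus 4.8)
The plan is to prove the two implications separately, in each case reducing to an unwinding of the definition of the structure group and of the function $S$ produced by Theorem \ref{garside_struct_group}. For the first implication, I would begin by recording that the structure monoid $M$ of a non-degenerate, involutive and braided solution is indeed a tableau monoid: its defining relations are $x_iy_j=S_1(x_i,x_j)\,S_2(x_i,x_j)$, so each side has length $2$; moreover $M$ is a Garside monoid (Theorem \ref{theo:garside}) satisfying conditions $(i)$ and $(ii)$ of Theorem \ref{garside_struct_group}, since by Claim \ref{cl_compl} every unordered pair of distinct generators contributes exactly one relation (giving $n(n-1)/2$ relations), and, $S$ being a bijection of $X^2$, no word $x_ix_j$ can occur on a side of two distinct relations. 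Now let $\phi\colon X\to X'$ be a bijection taking $S$ to $S'$, i.e. $S'(\phi(x),\phi(y))=(\phi(S_1(x,y)),\phi(S_2(x,y)))$ for all $x,y\in X$. Applying $\phi$ letterwise to a defining relation $x_ix_j=S_1(x_i,x_j)S_2(x_i,x_j)$ of $G$ and invoking this identity shows that $\phi(x_i)\phi(x_j)=S_1'(\phi(x_i),\phi(x_j))\,S_2'(\phi(x_i),\phi(x_j))$ is a defining relation of $G'$; running the same argument with $\phi^{-1}$ gives the converse, so $\phi$ is a t-isomorphism of the tableau monoids, hence of the tableau groups.

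For the converse implication I would first make explicit the solution attached to a monoid by the proof of Theorem \ref{garside_struct_group}: for $\operatorname{Mon}\langle X\mid R\rangle$, the associated map $S$ satisfies $S(x,y)=(u,v)$ whenever $xy$ occurs as one side of the (by $(ii)$, unique) relation $xy=uv$ of $R$, and $S(x,y)=(x,y)$ when $xy$ occurs in no relation of $R$; likewise for $S'$ and $R'$. Let $s\colon X\to X'$ be the bijection witnessing the t-isomorphism, so $x_ix_j=x_kx_l\in R\iff s(x_i)s(x_j)=s(x_k)s(x_l)\in R'$; in particular $s$ carries relations of $R$ bijectively onto relations of $R'$ and sides onto sides, so $xy$ is a side of a relation of $R$ exactly when $s(x)s(y)$ is a side of a relation of $R'$. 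A two-case check now finishes: if $xy=uv$ is a relation of $R$ then $s(x)s(y)=s(u)s(v)$ is a relation of $R'$, whence $S'(s(x),s(y))=(s(u),s(v))=(s(S_1(x,y)),s(S_2(x,y)))$; and if $xy$ lies in no relation of $R$, then neither does $s(x)s(y)$ in $R'$, so $S'(s(x),s(y))=(s(x),s(y))=(s(S_1(x,y)),s(S_2(x,y)))$. In both cases $s$ maps $S$ to $S'$, i.e. $(X,S)$ and $(X',S')$ are isomorphic, and the two implications together establish the claimed one-to-one correspondence (on isomorphism, resp. t-isomorphism, classes).

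I do not expect a genuine obstacle here; the argument is essentially the identification of the two descriptions ``relation of $R$'' $\leftrightarrow$ ``size-two orbit of $S$'' on $X^2$. The one point that deserves care is that the function $S$ furnished by Theorem \ref{garside_struct_group} depends on $R$ alone and is given by the explicit recipe above — and condition $(ii)$ (each word occurs at most once in $R$) is precisely what makes that recipe well defined and the case analysis dichotomous. Everything else is bookkeeping with the bijections $\phi$ and $s$, together with the already-established facts that structure monoids are Garside (Theorem \ref{theo:garside}) and conversely (Theorem \ref{garside_struct_group}).
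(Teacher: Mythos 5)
Your proposal is correct and follows essentially the same route as the paper: in both directions one takes the bijection $s$ of the t-isomorphism and the bijection $\phi$ of the solution isomorphism to be the same map, and checks that the identity $S'(\phi(x),\phi(y))=(\phi(S_{1}(x,y)),\phi(S_{2}(x,y)))$ is equivalent to the correspondence of defining relations $xy=tz \leftrightarrow \phi(x)\phi(y)=\phi(t)\phi(z)$. Your treatment is somewhat more explicit than the paper's (notably the separate case where $x_ix_j$ occurs in no relation, so that $S(x,y)=(x,y)$), but this is a refinement of the same argument, not a different one.
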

\begin{proof}
Clearly, the structure groups (monoids) $G$ and $G'$ are tableau groups (monoids).
We need to show that $G$ and $G'$ are t-isomorphic.
Since $(X,S)$ and  $(X',S')$  are isomorphic, there exists a bijection $\phi: X \rightarrow X'$ which maps $S$ to $S'$, that is $S'(\phi(x),\phi(y))=(\phi(S_{1}(x,y)),\allowbreak \phi(S_{2}(x,y)))$.
So, since by definition $S(x,y)=(S_{1}(x,y),S_{2}(x,y))$, we have $xy=tz$ if and only if $\phi(x)\phi(y)=\phi(t)\phi(z)$. That is, if we take $s$ to be equal to $\phi$ we have that $G$ and $G'$ are t-isomorphic. For the converse, take $\phi$ to be equal to $s$ and from the definition of $S$ and $S'$ from their tableau we have
$S'(\phi(x),\phi(y))=(\phi(S_{1}(x,y)),\phi(S_{2}(x,y)))$, that is $(X,S)$ and  $(X',S')$  are isomorphic.
\end{proof}

\section{The structure group of a permutation solution}
In this part, we consider a special case of set-theoretical solutions of the quantum Yang-Baxter equation,
namely the permutation solutions. These solutions  were  defined by Lyubashenko (see \cite{etingof}). Let $X$ be a set and let $S:X^{2} \rightarrow X^{2}$ be a mapping. A permutation solution  is a set-theoretical solution of the form $S(x,y)=(g(y),f(x))$, where $f,g:X\rightarrow X$. The solution  $(X,S)$ is  nondegenerate iff $f,g$ are bijective,  $(X,S)$ is  braided  iff $fg=gf$ and  $(X,S)$ is involutive  iff $g=f^{-1}$. Note that these solutions are defined by only two functions, while for  a general set-theoretical solution the number of defining functions is twice the cardinality of the set $X$.
\subsection{About permutation solutions that are non-involutive}
In this subsection, we consider the special case of non-degenerate and braided permutation solutions that are not necessarily involutive and  we  show that their  structure group is Garside.

Let $X$ be a finite set and let $S:X^{2} \rightarrow X^{2}$ be defined by  $S(x,y)=(g(y),f(x))$, where $f,g:X\rightarrow X$ are bijective and satisfy  $fg=gf$. So, $(X,S)$ is a non-degenerate and braided permutation solution that is not necessarily  involutive, as we do not require  $g=f^{-1}$. Let $G$ be the structure group of  $(X,S)$ and let $M$ be the monoid with the same presentation.
We  define an equivalence relation on the set $X$ in the following way:\\
$x\equiv x'$ if and only if there is an integer $k$ such that $(fg)^{k}(x)=x'$. We define $X'=X/ \equiv$ and we define functions $f',g': X' \rightarrow X'$  such that  $f'([x])=[f(x)]$ and  $g'([x])=[g(x)]$, where $[x]$ denotes the equivalence class of $x$ modulo $\equiv$.
 We then define $S':X'\times X'\rightarrow X'\times X'$ by $S'([x],[y])= (g'([y]),f'([x])) = ([g(y)],[f(x)])$. Our aim is to show that $(X',S')$ is a well-defined non-degenerate, involutive and braided solution (a permutation solution) and that its structure group $G'$ is isomorphic to $G$. Before doing this,  we illustrate the main ideas of the proofs to come with an example.
 \begin{ex}
$X=\{x_{1},x_{2},x_{3},x_{4},x_{5}\}$ and let $f=(1,4)(2,3)$ and $g=(1,2)(3,4)$.
 Then $f,g$ are bijective and satisfy $fg=gf=(1,3)(2,4)$ but $fg \neq Id$, so $(X,S)$ is a non-degenerate and braided (permutation) solution, where  $S(x,y)=(g(y),f(x))$. The set of relations $R$ is:\\
$\begin{array}{ccc}
x_{1}^{2}=x_{2}x_{4}=x_{3}^{2}=x_{4}x_{2}&&
x_{1}x_{2}=x_{1}x_{4}=x_{3}x_{4}=x_{3}x_{2}\\
x_{2}^{2}=x_{1}x_{3}=x_{4}^{2}=x_{3}x_{1}&&
x_{1}x_{5}=x_{5}x_{4}=x_{3}x_{5}=x_{5}x_{2}\\
x_{2}x_{1}=x_{2}x_{3}=x_{4}x_{3}=x_{4}x_{1}&&
x_{2}x_{5}=x_{5}x_{3}=x_{4}x_{5}=x_{5}x_{1}\\
\end{array}$\\
Using $\equiv$ defined above, we have $X'=\{ [x_{1}],[x_{2}],[x_{5}]\}$, with $x_{1}\equiv x_{3}$
and $x_{2}\equiv x_{4}$, since in this example it holds that $fg(1)=3$ and $fg(2)=4$.
 Applying the definition of $S'$ yields  $S'([x_{1}],[x_{1}])=([g(1)],[f(1)])=([2],[4])=([2],[2])$ and so on. So, $G'= \operatorname{Gp}\langle[x_{1}],[x_{2}],[x_{5}] \mid
[x_{1}]^{2}=[x_{2}]^{2},[x_{1}][x_{5}]=[x_{5}][x_{2}],
[x_{2}][x_{5}]=[x_{5}][x_{1}]\rangle$.
Note that in $G$, it holds that $x_{1}=x_{3}$ and $x_{2}=x_{4}$ since many of the defining relations from $R$ involve cancellation and $G= \operatorname{Gp}\langle x_{1},x_{2},x_{5} \mid x_{1}^{2}=x_{2}^{2}, x_{1}x_{5}=x_{5}x_{2},x_{2}x_{5}=x_{5}x_{1} \rangle$. So, $G$ and $G'$ have the same presentation, up to a renaming of the generators.
\end{ex}
Before we proceed to the general case, we need the following general lemma. The proof, by induction on k, is omitted because it is straightforward and technical (see \cite{chou}).
\begin{lem}\label{lem:calcul_Sk}
If $k$ is even, then $S^{k}(x,y)=(f^{k/2}g^{k/2}(x),f^{k/2}g^{k/2}(y))$.\\
If $k$ is odd, then $S^{k}(x,y)=(f^{(k-1)/2}g^{(k+1)/2}(y),f^{(k+1)/2}g^{(k-1)/2}(x))$.
\end{lem}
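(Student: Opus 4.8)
The plan is to prove both identities simultaneously by induction on $k$, repeatedly applying the defining relation $S(u,v)=(g(v),f(u))$ and using the standing hypothesis $fg=gf$, which guarantees that all powers of $f$ and $g$ commute with one another.

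First I would dispose of the base cases. For $k=0$ the even formula reads $S^{0}(x,y)=(x,y)$, which holds, and for $k=1$ the odd formula reads $S^{1}(x,y)=(g(y),f(x))$ since $(k-1)/2=0$ and $(k+1)/2=1$; this is just the definition of $S$.

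For the inductive step I would split on the parity of $k$. Suppose $k=2m$ is even; by the induction hypothesis $S^{k}(x,y)=(f^{m}g^{m}(x),f^{m}g^{m}(y))$, so $S^{k+1}(x,y)=S(f^{m}g^{m}(x),f^{m}g^{m}(y))=(g f^{m}g^{m}(y),\, f f^{m}g^{m}(x))=(f^{m}g^{m+1}(y),\, f^{m+1}g^{m}(x))$, which matches the odd formula for $k+1=2m+1$ because $((k+1)-1)/2=m$ and $((k+1)+1)/2=m+1$. Suppose instead $k=2m+1$ is odd; by the induction hypothesis $S^{k}(x,y)=(f^{m}g^{m+1}(y),\, f^{m+1}g^{m}(x))$, so $S^{k+1}(x,y)=(g f^{m+1}g^{m}(x),\, f f^{m}g^{m+1}(y))=(f^{m+1}g^{m+1}(x),\, f^{m+1}g^{m+1}(y))$, which matches the even formula for $k+1=2m+2$. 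In each case the only substantive move is commuting the extra factor of $f$ or $g$ past the accumulated powers, which is precisely where $fg=gf$ enters.

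The argument is purely mechanical; the one thing to keep track of is how the half-integer exponents shift when the parity of $k$ flips, and this bookkeeping is presumably why the authors relegate the complete details to \cite{chou}. I do not anticipate any genuine obstacle beyond that.
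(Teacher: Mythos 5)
Your induction on $k$ with a parity split, using only the definition $S(u,v)=(g(v),f(u))$ and the hypothesis $fg=gf$ to commute the accumulated powers, is correct and is exactly the argument the paper intends: the text states the proof is ``by induction on $k$'' and relegates the details to \cite{chou}. No further comment is needed.
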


\begin{lem}\label{equiv_cancel}
Let $x,x' \in X $. If $x \equiv x'$, then $x$ and $x'$ are equal in $G$.
\end{lem}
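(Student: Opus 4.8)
The plan is to reduce the statement to the single identity $fg(x)=x$ in $G$, valid for every generator $x\in X$; the lemma then follows by iterating. The first step is to locate a defining relation that collapses under cancellation. Fix $a\in X$ and apply $S$ to the pair $(a,g^{-1}(a))$: since $S(x,y)=(g(y),f(x))$, we get $S(a,g^{-1}(a))=(g(g^{-1}(a)),f(a))=(a,f(a))$, so $a\cdot g^{-1}(a)=a\cdot f(a)$ is one of the defining relations of $M$ and hence holds in $G$. As $G$ is a group, I left-cancel $a$ and obtain $g^{-1}(a)=f(a)$ in $G$ for every $a\in X$. (Symmetrically, feeding $S$ the pair $(f^{-1}(a),a)$ yields the right-cancellable relation $f^{-1}(a)\cdot a=g(a)\cdot a$, hence $f^{-1}(a)=g(a)$ in $G$; either identity suffices for what follows.)

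Next, because $g$ is a bijection of $X$, I substitute $a\mapsto g(a)$ in $g^{-1}(a)=f(a)$, which gives $a=f(g(a))=fg(a)$ in $G$ for all $a\in X$. By induction on $k$ this yields $a=(fg)^{k}(a)$ in $G$ for every $k\ge 0$, and since $fg$ is a bijection it extends to negative exponents: if $(fg)^{k}(x)=x'$ with $k<0$, then $(fg)^{-k}(x')=x$ with $-k>0$, so $x'=(fg)^{-k}(x')=x$ in $G$. Finally, if $x\equiv x'$ then by definition $(fg)^{k}(x)=x'$ for some integer $k$, whence $x=(fg)^{k}(x)=x'$ in $G$.

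The one point that requires care is that all cancellations must be carried out in $G$ rather than in $M$: for a non-involutive permutation solution $M$ need not be cancellative (indeed the example preceding the lemma exhibits generators genuinely collapsing), so one cannot argue directly in the monoid. Beyond spotting the relation $a\cdot g^{-1}(a)=a\cdot f(a)$ that degenerates under cancellation, I do not expect any real obstacle here; in particular Lemma~\ref{lem:calcul_Sk} is not needed for this argument, though it could be used to give an alternative derivation via the relations $xy=fg(x)\,fg(y)$ obtained from consecutive iterates of $S$.
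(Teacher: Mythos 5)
Your proof is correct, and it takes a genuinely shorter route than the paper's. The paper proves the lemma by computing $S^{k}(x,y)$ explicitly (Lemma \ref{lem:calcul_Sk}) and, for $k$ odd, choosing the auxiliary letter $y=f^{(k+1)/2}g^{(k-1)/2}(x)$ so that $S^{k}(x,y)=((fg)^{k}(x),y)$; chaining the defining relations gives $xy=x'y$ in $M$, right-cancellation in $G$ gives $x=x'$, and the even case is reduced to the odd one via an intermediate element $x''$. You instead isolate the single degenerate defining relation $a\cdot g^{-1}(a)=a\cdot f(a)$ coming from $S(a,g^{-1}(a))=(a,f(a))$, left-cancel in the group to get $g^{-1}(a)=f(a)$, hence $fg(a)=a$ in $G$ for every generator $a$, and then iterate (applying the one-step identity to the generators $(fg)^{j}(a)\in X$, which is legitimate since $fg$ is a bijection of $X$, and which also disposes of negative exponents). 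This dispenses with Lemma \ref{lem:calcul_Sk} and with the parity case-split entirely; what the paper's computation buys in exchange is an explicit word witnessing $xy=x'y$ inside the monoid $M$ for each $k$, whereas your argument lives in $G$ from the start. Both proofs, correctly, perform the cancellation in $G$ rather than in $M$. One cosmetic remark: when $g^{-1}(a)=f(a)$ already holds in $X$ the relation you invoke is trivial, but then the identity $g^{-1}(a)=f(a)$ in $G$ holds for free, so no case is lost.
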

\begin{proof}
 Let  $x,x'$ be in  $X$ such that  $x \equiv x'$.
If $x \equiv x'$ then it means  that there is an integer $k$ such that
  $(fg)^{k}(x)=f^{k}g^{k}(x)=x'$.
 If $k$ is odd, then let $y$ in $X$ be defined in the following way:
  $y=f^{(k+1)/2}g^{(k-1)/2}(x)$.
  So, from lemma \ref{lem:calcul_Sk}, $S^{k}(x,y)=(f^{(k-1)/2}g^{(k+1)/2}(y),f^{(k+1)/2}g^{(k-1)/2}(x))
                  \allowbreak =(f^{(k-1)/2}g^{(k+1)/2}f^{(k+1)/2}g^{(k-1)/2}(x),y)=((fg)^{k}(x),y)=(x',y)$. So, there is a relation $xy=x'y$ in $G$ which implies that $x=x'$ in $G$.
  If $k$ is even and $(fg)^{k}(x)=x'$, then there is an element  $x'' \in X$ such that
   $(fg)^{k-1}(x)=x''$,  where $k-1$ is odd. So, from the subcase studied above,
    there is $y \in X$,  $y=f^{k/2}g^{(k-2)/2}(x)$,  such that there is a relation $xy=x''y$ in $G$ and this
     implies that $x=x''$ in $G$.
Additionally,  $(fg)(x'')=x'$, so from the same argument as above there is $z \in X$  such that there is  a relation $x'z=x''z$ in $G$
and this implies that $x'=x''$ in $G$. So, $x=x'$ in $G$.
\end{proof}
We now show that $(X',S')$ is a well-defined non-degenerate, involutive and braided solution and this  implies from Theorem \ref{theo:garside} that its structure group $G'$ is Garside.
\begin{lem}
$(i)$ $f'$ and $g'$ are well defined, so $S'$ is well-defined.\\
$(ii)$ $f'$ and $g'$ are bijective, so $S'$ is non-degenerate.\\
$(iii)$ $f'$ and $g'$ satisfy $f'g'=g'f'$, so $S'$ is braided.\\
(iv)  $f'$ and $g'$ satisfy $f'g'=g'f'=id_{X'}$, so $S'$ is involutive.
\end{lem}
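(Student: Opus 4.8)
The plan is to verify the four claims in order, each reducing to a direct computation using the defining properties of $f$ and $g$ (bijective, $fg=gf$) together with Lemma \ref{equiv_cancel}, which guarantees that the equivalence classes behave coherently under the original maps.

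\textbf{Well-definedness (i).} First I would check that $f'$ and $g'$ do not depend on the chosen representative. Suppose $x\equiv x'$, so $x'=(fg)^k(x)=f^kg^k(x)$ for some integer $k$. Since $f$ and $g$ commute, $f(x')=f((fg)^k(x))=(fg)^k(f(x))$, so $f(x)\equiv f(x')$, hence $[f(x)]=[f(x')]$ and $f'$ is well-defined; the same argument with $g$ in place of the outer $f$ gives well-definedness of $g'$. Consequently $S'([x],[y])=([g(y)],[f(x)])$ is well-defined.

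\textbf{Non-degeneracy (ii) and braidedness (iii).} For surjectivity of $f'$: given $[y]\in X'$, pick $x$ with $f(x)=y$ (possible since $f$ is a bijection of $X$), then $f'([x])=[y]$. For injectivity: if $f'([x])=f'([x'])$, then $f(x)\equiv f(x')$, i.e. $f(x')=(fg)^k(f(x))=f((fg)^k(x))$ for some $k$; applying $f^{-1}$ gives $x'=(fg)^k(x)$, so $x\equiv x'$ and $[x]=[x']$. The same works for $g'$, so $S'$ is non-degenerate. For braidedness, since $fg=gf$ on $X$ we get $f'g'([x])=[fg(x)]=[gf(x)]=g'f'([x])$, so $f'g'=g'f'$, which is exactly the condition for a permutation solution to be braided.

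\textbf{Involutivity (iv).} This is the one point that genuinely uses the equivalence relation rather than being a formal transport of the original identities, so I expect it to be the crux. For any $x\in X$ we have $f'g'([x])=[fg(x)]=[(fg)^1(x)]$, and by the very definition of $\equiv$, the element $(fg)^1(x)$ is equivalent to $x$; hence $[fg(x)]=[x]$, i.e. $f'g'=\mathrm{id}_{X'}$. Likewise $g'f'([x])=[gf(x)]=[fg(x)]=[x]$ using $fg=gf$, so $g'f'=\mathrm{id}_{X'}$ as well. Thus $g'=(f')^{-1}$, which is precisely the condition making the permutation solution $(X',S')$ involutive, so $(S')^2=\mathrm{id}_{X'^2}$. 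Having established (i)--(iv), $(X',S')$ is a non-degenerate, involutive and braided (permutation) solution, and Theorem \ref{theo:garside} applies to conclude that its structure group $G'$ is Garside; the remaining step (carried out separately using Lemma \ref{equiv_cancel}) is to identify $G'$ with $G$.
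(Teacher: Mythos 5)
Your proof is correct and follows essentially the same route as the paper's: well-definedness and injectivity via commuting $(fg)^k$ past $f$ (resp.\ $g$), braidedness by transporting $fg=gf$ to the quotient, and involutivity from $[fg(x)]=[x]$, which is the $k=1$ case of the definition of $\equiv$. The only cosmetic difference is that you announce Lemma \ref{equiv_cancel} in your plan but never actually need it here (neither does the paper), and you spell out surjectivity of $f'$ explicitly where the paper relies on finiteness of $X'$.
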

\begin{proof}
\emph{$(i)$} Let $x,x'$ be in $X$ such that $x\equiv x'$.
We  show that $f'([x']) = f'([x])$, that is $[f(x')]=[f(x)]$.
Since  $x\equiv x'$, there is an integer $k$ such that $x'=(fg)^{k}(x)$, so $f'([x'])=[f(x')]=[f(fg)^{k}(x)]=[(fg)^{k}f(x)]=[f(x)]=f'([x])$, using the fact that $fg=gf$.
The same proof holds for $g'([x'])=g'([x])$.\\
\emph{$(ii)$} Assume that there are $x,y \in X$ such that $f'([x])=f'([y])$, that is $[f(x)]=[f(y)]$.
By the definition of $\equiv$, this means that there is an integer $k$ such that $f(x)=(fg)^{k}f(y)$, that is $f(x)=f(fg)^{k}(y)$,  since $fg=gf$. But $f$ is bijective, so $x=(fg)^{k}(y)$, which means that $[x]=[y]$. The same proof holds for $g'$, using the fact that $g$ is bijective.\\
\emph{$(iii)$}  Let $[x]\in X'$, so $f'(g'([x]))=f'([g(x)])=[f(g(x))]$  and $g'f'([x])=g'([f(x)])=[g(f(x))]$. Since $fg=gf$, $f'g'=g'f'$.\\
\emph{(iv)}  Let $[x]\in X'$, so from the definition of $\equiv$, we have $[fg(x)]=[x]$, so $f'g'([x])=[x]$, that is $f'g'=id_{X'}$.
\end{proof}
\begin{lem} \label{lem:G'isGarside}
Let $(X,S)$  be a not necessarily involutive permutation  solution. Let $X'=X/ \equiv$ and let $G'$ be the structure group corresponding to  $(X',S')$, as defined above.
Then $G'$ is  Garside. Furthermore,  if $x_{i}x_{j}=x_{k}x_{l}$  is  a defining relation in $G$, then $[x_{i}][x_{j}]=[x_{k}][x_{l}]$ is a defining relation in $G'$.
\end{lem}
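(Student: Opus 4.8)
The plan is to treat the two assertions separately, the first being essentially immediate from what precedes. For the claim that $G'$ is Garside, I would invoke the preceding lemma, which establishes that $(X',S')$ is a well-defined, non-degenerate, involutive and braided permutation solution; Theorem \ref{theo:garside} then applies verbatim and yields that the structure group $G'$ of such a solution is a Garside group. Nothing more is required here.

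For the second assertion I would simply unwind the definitions. If $x_{i}x_{j} = x_{k}x_{l}$ is a defining relation of $G$, then by the definition of the structure group this means $S(x_{i},x_{j}) = (x_{k},x_{l})$; since $S(x,y) = (g(y),f(x))$, this amounts to $g(x_{j}) = x_{k}$ and $f(x_{i}) = x_{l}$. Applying the definition $S'([x],[y]) = (g'([y]),f'([x])) = ([g(y)],[f(x)])$ with $x = x_{i}$ and $y = x_{j}$, one gets $S'([x_{i}],[x_{j}]) = ([g(x_{j})],[f(x_{i})]) = ([x_{k}],[x_{l}])$, which is exactly the statement that $[x_{i}][x_{j}] = [x_{k}][x_{l}]$ is a defining relation of $G'$. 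The only hypothesis used beyond these formal manipulations is that $S'$ is well defined on the quotient $X' = X/\equiv$, and this is part $(i)$ of the preceding lemma, which may be cited.

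I do not anticipate any genuine obstacle here: both parts are formal consequences of results already in hand. The one point deserving a moment's attention is to read \emph{``defining relation of $G'$''} in the intended sense, namely a relation $[x][y] = [t][z]$ read off from $S'$, rather than as a non-triviality claim; with that reading the implication follows at once from the displayed chain of equalities. This lemma is precisely the technical input needed for the subsequent identification $G' \cong G$: combined with Lemma \ref{equiv_cancel}, it lets one match the two presentations after a renaming of generators, exactly as in the worked example above.
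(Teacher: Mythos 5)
Your proposal is correct and follows essentially the same route as the paper: the Garside property of $G'$ is obtained by combining the preceding well-definedness lemma with Theorem \ref{theo:garside}, and the second assertion is the same unwinding of the definitions of $S$ and $S'$, including the observation that the resulting relation in $G'$ may be trivial.
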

\begin{proof}
It holds that $(X',S')$ is a non-degenerate, braided and involutive permutation  solution,
 so by Theorem \ref{theo:garside} the group $G'$ is Garside.  Assume that  $x_{i}x_{j}=x_{k}x_{l}$  is  a defining relation in $G$, that is $S(x_{i},x_{j})=(x_{k},x_{l})$.
 From the definition of $S'$, $S'([x_{i}],[x_{j}])=([g(x_{j})],[f(x_{i})])=([x_{k}],[x_{l}])$, that is there is a defining relation $[x_{i}][x_{j}]=[x_{k}][x_{l}]$ in $G'$. Note that this relation  may be a trivial one if
 $[x_{i}]=[x_{k}]$ and $ [x_{j}]=[x_{l}]$ in $G'$
\end{proof}
Now, it remains to show that the structure group $G$ is isomorphic
to  the group $G'$.
\begin{thm}
Let $G$ be the structure group of a non-degenerate and braided permutation  solution  $(X,S)$ that is not necessarily involutive. Then $G$ is a Garside group.
\end{thm}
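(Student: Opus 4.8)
The plan is to prove that $G$ is isomorphic to the group $G'$ constructed above; since $G'$ was shown to be Garside in Lemma~\ref{lem:G'isGarside}, this finishes the proof. The isomorphism will be exhibited by a pair of mutually inverse group homomorphisms, defined on generators in the obvious way, and the verification that each is well defined will draw on the two preceding lemmas.

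First I would define $\psi : G \to G'$ on generators by $x_{i} \mapsto [x_{i}]$. To see that $\psi$ respects the defining relations of $G$, recall that such a relation has the form $x_{i}x_{j} = x_{k}x_{l}$ with $S(x_{i},x_{j}) = (x_{k},x_{l})$, that is $x_{k} = g(x_{j})$ and $x_{l} = f(x_{i})$. Then $[x_{k}] = [g(x_{j})]$ and $[x_{l}] = [f(x_{i})]$, so $S'([x_{i}],[x_{j}]) = ([g(x_{j})],[f(x_{i})]) = ([x_{k}],[x_{l}])$, and hence $[x_{i}][x_{j}] = [x_{k}][x_{l}]$ holds in $G'$; this is exactly the second assertion of Lemma~\ref{lem:G'isGarside}, and the relation may of course be trivial. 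Thus $\psi$ is a well-defined group homomorphism.

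Next I would define $\theta : G' \to G$ on generators by $[x_{i}] \mapsto x_{i}$, for any chosen representative $x_{i}$ of the class $[x_{i}]$; this is independent of the representative, since $x_{i} \equiv x_{i}'$ forces $x_{i} = x_{i}'$ in $G$ by Lemma~\ref{equiv_cancel}. To check that $\theta$ respects the defining relations of $G'$, take such a relation $[a][b] = [c][d]$ coming from $S'([a],[b]) = ([c],[d])$, i.e.\ $[c] = [g(b)]$ and $[d] = [f(a)]$. In $G$ we have the relation $ab = g(b)\,f(a)$ arising from $S(a,b) = (g(b),f(a))$, while $g(b) \equiv c$ and $f(a) \equiv d$ give $g(b) = c$ and $f(a) = d$ in $G$ by Lemma~\ref{equiv_cancel}; hence $ab = cd$ in $G$. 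So $\theta$ is a well-defined group homomorphism.

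Finally, both composites act as the identity on generators: $\theta\psi(x_{i}) = \theta([x_{i}]) = x_{i}$ and $\psi\theta([x_{i}]) = \psi(x_{i}) = [x_{i}]$, so $\theta\psi = \mathrm{id}_{G}$ and $\psi\theta = \mathrm{id}_{G'}$. Therefore $G \cong G'$, and since $G'$ is Garside by Lemma~\ref{lem:G'isGarside}, so is $G$. The bulk of the work has already been done in the preceding lemmas — in particular the cancellation statement of Lemma~\ref{equiv_cancel}, which rests on the explicit formula for $S^{k}$ in Lemma~\ref{lem:calcul_Sk} — so the remaining step is essentially bookkeeping; the one mild subtlety is that $G$ and $G'$ have generating sets of different sizes, so one must check that the two maps are genuinely well defined on the (possibly trivial, possibly collapsing) defining relations, which is precisely what the verifications above accomplish.
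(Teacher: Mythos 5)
Your proposal is correct and follows essentially the same route as the paper: the paper also proves $G\cong G'$ via the quotient map $x\mapsto[x]$, using Lemma \ref{lem:G'isGarside} for surjectivity and Lemma \ref{equiv_cancel} to show that every defining relation of $G'$ already holds in $G$, which is exactly the content of your explicit inverse homomorphism $\theta$. Packaging the injectivity argument as a pair of mutually inverse homomorphisms is a cosmetic difference only.
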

\begin{proof}
We show that the group $G$ is isomorphic to the group  $G'$, where $G'$ is the structure group of $(X',S')$ and
$X'=X/ \equiv$, and from lemma \ref{lem:G'isGarside} this implies that $G$ is a Garside group.
Let  $\Phi: X \rightarrow X'$ be the quotient map defined by  $\Phi (x)=[x]$ for all $x \in X$.
From lemma \ref{lem:G'isGarside}, $\Phi:G \rightarrow G'$ is an homomorphism of groups, so $\Phi $ is an epimorphism.
We need to show that $\Phi$ is injective.
We show that  if $[x][y]=[t][z]$ is a non-trivial defining relation in $G'$, then $xy=tz$  is  a defining relation in $G$.
If $[x][y]=[t][z]$ is a non-trivial defining relation in $G'$, then since
 $S'([x],[y])= ([g(y)],[f(x)])$, we have that $[g(y)]=[t]$ and $[f(x)]=[z]$. That is, there are
$z' \in [z]$ and $t' \in [t]$ such that $g(y)=t'$ and $f(x)=z'$.
 This implies that $S(x,y)=(g(y),f(x))=(t',z')$, that is $xy=t'z'$  is  a defining relation in $G$.
 It holds that $t \equiv t'$ and $z \equiv z'$, so from lemma \ref{equiv_cancel}, $t=t'$ and $z=z'$ in $G$.
 So, $xy=tz$  is  a defining relation in $G$. \\
  Note that if $[x][y]=[t][z]$ is a trivial relation in $G'$, that is $[x]=[t]$ and $[y]=[z]$, then
 from lemma \ref{equiv_cancel} $x=t$ and $y=z$ in $G$ and so $xy=tz$ holds trivially in $G$.
So, $\Phi$ is an isomorphism of the groups $G$ and $G'$ and from lemma \ref{lem:G'isGarside},  $G$ is Garside.
\end{proof}

\subsection{Computation of $\Delta$ for a permutation solution}
In this subsection, we consider the structure group of a non-degenerate, braided and involutive permutation solution. We claim that given the decomposition of the defining function of the permutation solution as the product of disjoint cycles, one can easily find a Garside element in its structure group. This result can be extended to non-degenerate and braided permutation solutions that are not involutive, using the construction from section $7.1$.

 \begin{prop}
  Let $X=\{x_{1},..,x_{n}\}$,  and $(X,S)$ be a  non-degenerate, braided and involutive permutation solution defined by $S(i,j)=(f(j),f^{-1}(i))$, where  $f$ is a permutation on $\{1,..,n\}$. Let  $M$ be the monoid with the same presentation as the structure group. Assume that  $f$ can be described as the product of disjoint cycles:  \\ $f=(t_{1,1},..,t_{1,m_{1}})(t_{2,1},..,t_{2,m_{2}})
(t_{k,1},..,t_{k,m_{k}})(s_{1})..(s_{l})$, $t_{i,j},s_{*}\in \{1,..,n\}$. Then
$(i)$ For $1 \leq i \leq k$, $x_{t_{i,1}}^{m_{i}}=x_{t_{i,2}}^{m_{i}}=..=x_{t_{i,m_{i}}}^{m_{i}}$ in $M$ and this element is denoted by $x_{t_{i}}^{m_{i}}$.\\
$(ii)$ The element   $\Delta=x_{t_{1}}^{m_{1}}x_{t_{2}}^{m_{2}}..x_{t_{k}}^{m_{k}}x_{s_{1}}..x_{s_{l}}$
is a Garside element in $M$.
\end{prop}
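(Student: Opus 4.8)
The plan is to reduce both parts to Theorem~\ref{thm_delta_lcm_atoms}, which tells us that the right lcm $x_{1}\vee\dots\vee x_{n}$ of the atoms is a Garside element of $M$; it then suffices to identify $\Delta$ with that lcm. The computation exploits the special feature of a permutation solution: here $S(i,j)=(g_{i}(j),f_{j}(i))$ with $g_{i}=f$ for \emph{every} $i$. Consequently, by Remark~\ref{rem_interpret_compl_g}, whenever a complement $u\setminus x_{a}$ with $u\in M$ and $x_{a}\in X$ is nonempty it is a single generator (Lemma~\ref{lem_MXin X}) and equals $x_{f^{-N}(a)}$, where $N$ is the common length of the words representing $u$ --- simply because every factor $g^{-1}_{h_{r}}$ occurring in the reversing diagram is $f^{-1}$. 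Non-collapse of the complements we encounter will be supplied by Lemma~\ref{lem_compl_egal}: a right lcm of pairwise distinct generators, reversed against one further distinct generator, never gives $\epsilon$.

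For part $(i)$, I would fix a cycle with support $C=\{a,f(a),\dots,f^{m-1}(a)\}$, $m=m_{i}$, and prove by induction on $j$, for $1\le j\le m$, that $x_{a}^{j}=\bigvee_{r=0}^{j-1}x_{f^{r}(a)}$. The case $j=1$ is trivial. For the step with $j+1\le m$, the indices $a,f(a),\dots,f^{j}(a)$ are pairwise distinct, so by Lemma~\ref{lem_compl_egal} the complement $x_{a}^{j}\setminus x_{f^{j}(a)}=\bigl(\bigvee_{r=0}^{j-1}x_{f^{r}(a)}\bigr)\setminus x_{f^{j}(a)}$ is nonempty, and by the observation above it equals $x_{f^{-j}(f^{j}(a))}=x_{a}$; hence $\bigvee_{r=0}^{j}x_{f^{r}(a)}=x_{a}^{j}\vee x_{f^{j}(a)}=x_{a}^{j}(x_{a}^{j}\setminus x_{f^{j}(a)})=x_{a}^{j+1}$. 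Taking $j=m$ yields $x_{a}^{m}=\bigvee_{b\in C}x_{b}$, which is manifestly independent of the starting point $a$ in $C$; this is exactly the equality $x_{t_{i,1}}^{m_{i}}=\dots=x_{t_{i,m_{i}}}^{m_{i}}$, and the common value is the right lcm of the generators indexed by the cycle.

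For part $(ii)$, I would partition $\{1,\dots,n\}=A_{1}\sqcup\dots\sqcup A_{p}$, where $A_{r}$ is the support of the $r$-th cycle of length $\ge 2$ for $r\le k$ and $A_{k+j}=\{s_{j}\}$, and let $w_{r}$ be the corresponding factor of $\Delta$, so that $w_{r}=\bigvee_{a\in A_{r}}x_{a}$ by $(i)$. Then I would show by induction on $j$ that $w_{1}w_{2}\cdots w_{j}=\bigvee_{a\in A_{1}\cup\dots\cup A_{j}}x_{a}$. Granting this for $j$, with $u:=w_{1}\cdots w_{j}=\bigvee_{b\in B}x_{b}$ and $B=A_{1}\cup\dots\cup A_{j}$, the distributivity $u\setminus(v\vee w)=(u\setminus v)\vee(u\setminus w)$ from \cite[Lemma 1.7]{deh_francais} gives $u\setminus w_{j+1}=\bigvee_{a\in A_{j+1}}(u\setminus x_{a})$; each $a\in A_{j+1}$ lies outside $B$, so Lemma~\ref{lem_compl_egal} applies and $u\setminus x_{a}=x_{f^{-N}(a)}$ for $N$ the length of a word representing $u$. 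Since $A_{j+1}$ is the support of a single cycle of $f$ (or a fixed point), it is invariant under every power of $f$, so $f^{-N}(A_{j+1})=A_{j+1}$ and $u\setminus w_{j+1}=\bigvee_{a\in A_{j+1}}x_{a}=w_{j+1}$. Therefore $w_{1}\cdots w_{j+1}=u\,w_{j+1}=u\,(u\setminus w_{j+1})=u\vee w_{j+1}=\bigl(\bigvee_{b\in B}x_{b}\bigr)\vee\bigl(\bigvee_{a\in A_{j+1}}x_{a}\bigr)=\bigvee_{a\in B\cup A_{j+1}}x_{a}$, closing the induction. For $j=p$ this says $\Delta=w_{1}\cdots w_{p}=x_{1}\vee\dots\vee x_{n}$, which is a Garside element by Theorem~\ref{thm_delta_lcm_atoms}.

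The hard part will be bookkeeping the possibility that a reversing step collapses to $\epsilon$; the clean way around this is to always present the element being reversed as an honest right lcm of pairwise distinct generators --- which it is in both inductions --- so that Lemma~\ref{lem_compl_egal} directly forbids collapse. Once that is arranged, the three substantive ingredients --- the identification $u\setminus x_{a}=x_{f^{-N}(a)}$ valid because all the $g_{i}$ coincide with $f$, the invariance of each cycle support under every power of $f$, and the resulting collapse of the \emph{product} $w_{1}\cdots w_{p}$ onto the \emph{lcm} $\bigvee_{a}x_{a}$ --- are all automatic, and both statements follow.
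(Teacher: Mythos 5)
Your argument is correct, but there is nothing in the paper to compare it with: for this proposition the paper gives no proof at all and simply refers the reader to the author's thesis \cite{chou}. Judged against the toolkit the paper does provide, your proof is a complete and self-contained derivation. The reduction to Theorem \ref{thm_delta_lcm_atoms} is the right move, and the three ingredients you isolate all hold: since $g_{i}=f$ for every $i$, Remark \ref{rem_interpret_compl_g} gives $u\setminus x_{a}=x_{f^{-N}(a)}$ whenever the complement is nonempty ($N$ being the common length of the words representing $u$, which is well defined because the relations are length-preserving); Lemma \ref{lem_compl_egal} rules out collapse to $\epsilon$ precisely because at every stage you reverse a right lcm of pairwise distinct generators against a further distinct generator; and Lemma \ref{lem_MXin X} guarantees the result is a single letter. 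The induction in $(i)$ giving $x_{a}^{j}=\bigvee_{r=0}^{j-1}x_{f^{r}(a)}$ checks out against the paper's own Example \ref{example_deltapure} (where $f=(1,2,3)$ and indeed $x_{1}^{2}=x_{1}\vee x_{2}$, $x_{1}^{3}=\Delta$), and the independence of the value $\bigvee_{b\in C}x_{b}$ from the starting point of the cycle is exactly assertion $(i)$. In $(ii)$, the only point requiring care is the identity $u\setminus w_{j+1}=w_{j+1}$, and your justification via the distributivity rule of \cite[Lemma 1.7]{deh_francais} together with the invariance of each cycle support under $f^{-N}$ is exactly what is needed to turn the product $w_{1}\cdots w_{p}$ into the lcm $x_{1}\vee\dots\vee x_{n}$. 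I see no gap.
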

We refer the reader to \cite{chou} for the proof.

\end{document}